\def\dfrac{\displaystyle\frac}
\let\oldsection\section
\renewcommand\section{\setcounter{equation}{0}\oldsection}
\renewcommand\theequation{\thesection.\arabic{equation}}
\newtheorem{theorem}{Theorem}[section]
\newtheorem{lemma}[theorem]{Lemma}
\newtheorem{proposition}[theorem]{Proposition}
\newtheorem{definition}[theorem]{Definition}
\newtheorem{remark}[theorem]{Remark}
\newtheorem{corollary}[theorem]{Corollary}
\def\R{\mathbb{R}}
\def\M{\mathcal{M}}
\title[Normalized solutions for the planar Schr\"{o}dinger-Poisson system]
{Normalized solutions for the planar Schr\"{o}dinger-Poisson system with two electrons interaction}
\author[B. Li]{Baihong Li}
\address{College of Mathematics, Jilin University, Changchun 130012, PR CHINA}
\email{bhlimath@zohomail.com}
\author[Y. Wei]{Yuanhong Wei}
\address{College of Mathematics, Jilin University, Changchun 130012, PR CHINA}
\email{weiyuanhong@jlu.edu.cn}
\author[X. Zeng]{Xiangjian Zeng}
\address{Faculty of Mathematics, Informatics and Mechanics, University of Warsaw, ul. Banacha 2, 02-097 Warsaw, POLAND \& 
	Institute of Mathematics, Polish Academy of Science, ul. Śniadeckich 8, 00-656 Warsaw, POLAND 
}
\email{xzeng@impan.pl}
\begin{document}

	\begin{abstract}
		
		This paper focuses on the normalized solutions for the planar Schr\"{o}dinger-Poisson system with a  two-electron interaction, which models the effect between electrons and the electrostatic potential they generate.  As the parameters vary, some existence results are established. Specifically, a ground state solution is obtained for some general cases. The existence of two solutions is established for the mass-supercritical case, one of which is a  ground state solution and the other one is an excited state solution. 
		We develop  a compactness method to deal with the functionals involving logarithmic convolution terms. The  Poho\v{z}aev identity for the coupled  Schr\"{o}dinger-Poisson system with a logarithmic convolution term is also shown, which is crucial for addressing the mass-supercritical problem. 
		
	\end{abstract}
	\maketitle
	{\it  {\rm Keywords: planar Schr\"odinger-Poisson system; normalized solution; logarithmic convolution; Poho\v{z}aev identity; mass-supercritical } }
	
	{Mathematics Subject Classification: 35J50; 35Q40; 49J27 }

	\tableofcontents
	\section{Introduction}
	The present paper focuses on the following	  Schr\"{o}dinger-Poisson system with two electrons interaction: 
	\begin{equation}\label{evolutional system}
		\begin{cases}
			\begin{aligned}
				-i\partial_t\Phi_1-\Delta\Phi_1 +w\Phi_1&=\mu_{1}\left|\Phi_1\right|^{2p-2}\Phi_1+\beta\left|\Phi_2\right|^{p}\left|\Phi_1\right|^{p-2}\Phi_1,\quad &&\text{in } \R\times\R^N,\\
				-i\partial_t\Phi_2-\Delta\Phi_2 +w\Phi_2&=\mu_{2}\left|\Phi_2\right|^{2p-2}\Phi_2+\beta\left|\Phi_1\right|^{p}\left|\Phi_2\right|^{p-2}\Phi_2,\quad &&\text{in } \R\times\R^N,\\
				\Delta w&=2\pi\left(|\Phi_1|^2+|\Phi_2|^2\right)\quad &&\text{in } \R^N,
			\end{aligned}
		\end{cases} 
	\end{equation}
	where $\Phi_1,\Phi_2: \R\times\R^{N}\to \mathbb{C}$, $w: \R^{N}\to\R$, $i$ is the imaginary unit, $p>1$ is a constant,  $\mu_{1},\mu_{2},\beta\in \R$ are parameters.

	The system \eqref{evolutional system} arises in the field of quantum mechanics. The first two equations constitute the coupled Schr\"{o}dinger system, typically employed to depict the evolution of the wave functions of two particles. 	Examples also include Bose-Einstein condensation with multiple states \cite{PhysRevLett.78.3594,Frantzeskakis2010,Malomed2008287} and the propagation of mutually incoherent wave packets in nonlinear optics  \cite{Akhmediev19992661,Buryak200263}. The third equation represents the Poisson equation, describing the electrostatic potential. This coupled system \eqref{evolutional system}  is utilized to model the behavior of quantum mechanical systems within a potential field, particularly in semiconductor devices and quantum dots \cite{MRS1990b}, where it captures the interaction between electrons and the electrostatic potential they generate.
	
	Our goal is to find the stationary solution to this system, which takes the form  $\Phi_1(t,x)=e^{i\lambda_{1}t}u(x)$, $\Phi_2(t,x)=e^{i\lambda_{2}t}v(x)$. By substituting these forms into the original system, we obtain the following stationary system:
	\begin{equation}\label{HF}
		\begin{cases}
			\begin{aligned}
				-\Delta u+(\lambda_1+w)u&=\mu_1|u|^{2p-2}u+\beta|u|^{p-2}|v|^{p}u,&&\quad\text{in }\R^N,\\
				-\Delta v+(\lambda_2+w)v&=\mu_2|v|^{2p-2}v+\beta|u|^{p}|v|^{p-2}v,&&\quad\text{in }\R^N,\\
				\Delta w&=2\pi(|u|^2+|v|^2),&&\quad\text{in }\R^N, 
			\end{aligned}
		\end{cases}
	\end{equation}
	where $u, v :\R^{N}\to \R$. 
	
	We are concerned with the case of $N=2$ because the corresponding fundamental solution to the Laplace  equation in this dimension differs significantly from those in higher dimensions, posing substantial challenges in the selection of functional spaces and estimation of energy. In fact,	it is well-known that the fundamental solution to the Laplace  equation is 
	\begin{equation*}
		\Gamma(x)=\begin{cases}
			\frac{1}{N(2-N)\omega_{N}}|x|^{2-N},&N>2,\\
			\frac{1}{2\pi}\log|x|, &N=2,
		\end{cases}
	\end{equation*}
	where $\omega_N$ denotes the volume of unit ball in $\R^N$. Based on this, from the third equation of \eqref{HF} we can represent $w$ as the Newton potential of $2\pi\left(u^2+v^2\right)$, i.e., the convolution with the fundamental solution $\Gamma$. Then \eqref{HF} is equivalent to  
	\begin{equation*}
		\begin{cases}
			-\Delta u+\lambda_1 u+2\pi\left[\Gamma\ast\left(u^2+v^2\right)\right]u=\mu_{1}\left|u\right|^{2p-2}u+\beta\left|v\right|^{p}\left|u\right|^{p-2}u,&\quad \text{in } \R^N,\\
			-\Delta v+\lambda_2 v+2\pi\left[\Gamma\ast\left(u^2+v^2\right)\right]v=\mu_{2}\left|v\right|^{2p-2}v+\beta\left|u\right|^{p}\left|v\right|^{p-2}v,&\quad \text{in } \R^N,
		\end{cases} 
	\end{equation*}
	where $\ast$ denotes the convolution of two functions.

	For the nonlinear Schr\"{o}dinger-Poisson systems of single electron in $\R^3$, there are abundant literature on the topic of the existence of stationary solutions, see \cite{AR2008,AP2008,DM2004,Ruiz2006,DW2005} and the references therein. The energy functionals of these papers are defined on $H^1(\R^3)$. However, it is different for the problem in $\R^2$. In fact, 
	at this time the system is 
	\begin{equation}\label{2-HF}
		\begin{cases}
			-\Delta u+\lambda_1 u+\left[\log|\cdot|\ast\left(u^2+v^2\right)\right]u=\mu_{1}\left|u\right|^{2p-2}u+\beta\left|v\right|^{p}\left|u\right|^{p-2}u,   &\quad \text{in }\R^2,\\
			-\Delta v+\lambda_2 v+\left[\log|\cdot|\ast\left(u^2+v^2\right)\right]v=\mu_{2}\left|v\right|^{2p-2}v+\beta\left|u\right|^{p}\left|v\right|^{p-2}v,   &\quad \text{in } \R^2,
		\end{cases} 
	\end{equation}
	with corresponding energy functional
	\begin{equation*}
		\begin{aligned}
			I_0\left(u,v\right):=&\frac{1}{2}\int_{\R^2}\left(\left|\nabla u\right|^2+\left|\nabla v\right|^2\right)dx+\frac{1}{4}\iint_{\R^2\times\R^2}\log\left|x-y\right|\left(u^2(x)+v^2(x)\right)\left(u^2(y)+v^2(y)\right)dxdy\\
			&+\frac{1}{2}\int_{\R^2}\left(\lambda_1|u|^2+\lambda_2|v|^2\right)dx-\frac{1}{2p}\left(\mu_{1}\int_{\R^2}\left|u\right|^{2p}dx+\mu_{2}\int_{\R^2}\left|v\right|^{2p}dx+2\beta\int_{\R^2}\left|uv\right|^{p}dx\right).
		\end{aligned}
	\end{equation*}
	The appearance of logarithmic convolution makes the energy functional not well-defined on  $H^1(\R^2)$.
	
	To deal with this  obstacle, Stubbe \cite{stubbe2008bound} considered a slightly smaller Hilbert subspace of  $H^1(\R^2)$, namely, 
	\begin{equation}\label{SpaceE}
		E:=\left\{u\in H^1\left(\R^2\right):\int_{\R^2}\log\left(1+|x|^2\right)u^2\left(x\right)dx<\infty\right\}.
	\end{equation}
	This variational framework can be applied to the   planar Schr\"{o}dinger-Poisson system with single electron. Cingolani and Weth \cite{CW16} studied the following equation
	\begin{equation}\label{CWequation}
		-\Delta u + a(x)u+\left[\log|\cdot|\ast u^2\right]u=b|u|^{p-2}u,\quad\text{in }\R^2,
	\end{equation}
	where $b\geq0, p\geq2$ and $a(x)\in L^\infty(\mathbb{R}^2)$ is a positive potential. For the case where $a(x)$ is continuous and $\mathbb{Z}^2$-periodic and $p\geq4$, they proved the existence of a ground state solution and infinitely many geometrically distinct solutions to \eqref{CWequation}.
	If $a(x)$ is a positive constant and the problem \eqref{CWequation} satisfies some symmetric condition, the similar existence and multiplicity results can also be obtained.
	Moreover,  this framework can also be used to study the planar Schr\"odinger-Poisson system with two electrons interaction.  Carvalho et al.   
	\cite{CFMM22} employed the Nehari manifold to establish the existence of a least energy solution for the following system
	\begin{equation}\label{CFMMequation}
		\begin{cases}
			-\Delta u+u+\left[\log|\cdot|\ast\left(u^2+v^2\right)\right]u=\mu_{1}\left|u\right|^{2p-2}u+\beta\left|v\right|^{p}\left|u\right|^{p-2}u,&\quad \text{in } \R^2,\\
			-\Delta v+v+\left[\log|\cdot|\ast\left(u^2+v^2\right)\right]v=\mu_{2}\left|v\right|^{2p-2}v+\beta\left|u\right|^{p}\left|v\right|^{p-2}v,&\quad \text{in } \R^2,
		\end{cases} 
	\end{equation}
	where $p\geq2$ and $\beta\geq0$. They illustrated that the functional related to the problem \eqref{CFMMequation} is well-defined and of class $C^1$ in the product space $E\times E$. Motivated by the previous work, for our problem \eqref{2-HF}, it is reasonable to define the functional $I(u,v)$ on the space $E\times E$  such that it is well-defined and preserves differentiability.   Moreover, we will see that there exist compactly embedding from such working space into Lebesgue's space.  However, since the norm of the space is never invariant by translation, essential difficulties also arise when using the variational method to operate on the space $E\times E$.

	Normalized solutions to differential equations have attracted considerable attention in the literature, particularly because they often arise from physical contexts.  It is known that system \eqref{evolutional system} obeys the mass conservation law, i.e., $\int_{\R^N}|\Phi_1(t,x)|^2dx=c_1$ and $\int_{\R^N}|\Phi_2(t,x)|^2dx=c_2$ for all $t\in\R$, where $c_1$ and  $c_2$ are positive constants. Therefore, the ansatz for $\Phi_1$ and $\Phi_2$ implies that $\int_{\R^2}|u|^2dx=c_1$, $\int_{\R^2}|v|^2dx=c_2$. From physical viewpoint, it is natural to ask whether the system possesses  the solutions with  prescribed  $L^2$-norms.  Here  $\lambda_1$ and $\lambda_2$ can be regarded as Lagrange multipliers which are unknown. For the study of normalized solutions to   Schr\"{o}dinger systems, we refer to \cite{BJS2016,BS2017,BS2019,BZZ2021,GJ2018,JeanjeanZZ2024,mederski2024multiplenormalizedsolutionsnonlinear} and the references therein.  There are also results for the existence of normalized solutions to Schr\"{o}dinger-Poisson systems, such as \cite{BJL2013,JL2013,JLe2021,Lions1987,GH2024}, etc.

	Cingolani and Jeanjean \cite{CJ19} investigated  the existence of normalized solutions for the following planar Schr\"{o}dinger-Poisson system with a single electron
	\begin{equation}\label{CJequation}
		\begin{cases}
			-\Delta u+\lambda u+\gamma\left[\log|\cdot|\ast\left(u^2\right)\right]u=a\left|u\right|^{p-2}u,&\quad \mathrm{in}~{\R^2},\\
			\int_{\R^2}\left|u\right|^2dx=c,
		\end{cases} 
	\end{equation}
	where $c>0,\lambda\in\R,\gamma\in\R,a\in\R$ and $p>2$.
	They obtained the existence of normalized solutions for \eqref{CJequation} in appropriate parameter ranges that ensure the functional is bounded from below, thus leading to a global infimum. Furthermore, they employed the Nehari-Poho\v{z}aev constraint method to locate critical points in other parameter ranges and established both the non-existence and existence conditions for normalized solutions of \eqref{CJequation}. However, to the best of our knowledge, the investigation of normalized solutions of the planar Schr\"{o}dinger-Poisson system with two electrons interaction \eqref{HF} does not appear in the literature. The objective of this paper is to explore the existence of normalized solutions of \eqref{HF} in some appropriate parameter ranges.

	In the present paper, we study the existence of normalized solutions to the system \eqref{HF}. Based on the above discussion, we study the following system:
	\begin{equation}\label{2-NHF}
		\begin{cases}
			-\Delta u+\lambda_1u+\left[\log|\cdot|\ast\left(u^2+v^2\right)\right]u=\mu_{1}\left|u\right|^{2p-2}u+\beta\left|v\right|^{p}\left|u\right|^{p-2}u,&\quad\text{in } \R^2,\\
			-\Delta v+\lambda_2v+\left[\log|\cdot|\ast\left(u^2+v^2\right)\right]v=\mu_{2}\left|v\right|^{2p-2}v+\beta\left|u\right|^{p}\left|v\right|^{p-2}v,&\quad\text{in } \R^2,\\
			\int_{\R^2}\left|u\right|^2dx=c_1,\quad\int_{\R^2}\left|v\right|^2dx=c_2,
		\end{cases} 
	\end{equation}
	where $c_1, c_2>0$. Our goal is to find the  solution $\left(\lambda_1,\lambda_2,u,v\right)\in\R^2\times E\times E$ to the problem \eqref{2-NHF}. Owing to the uniqueness of the solution to Poisson's equation,  $(\lambda_1,\lambda_2,u,v)$ is a solution of \eqref{2-NHF} if and only if $(\lambda_1,\lambda_2,u,v,\log|\cdot|\ast\left(u^2+v^2\right))$ is a solution of \eqref{HF}. We will  constrain the functional $I\left(u,v\right)$ on the product $L^2$ spheres $S\left(c_1\right)\times S\left(c_2\right)$, where
	\begin{equation*}
		\begin{aligned}
			I\left(u,v\right):=&\frac{1}{2}\int_{\R^2}\left(\left|\nabla u\right|^2+\left|\nabla v\right|^2\right)dx+\frac{1}{4}\iint_{\R^2\times\R^2}\log\left|x-y\right|\left(u^2(x)+v^2(x)\right)\left(u^2(y)+v^2(y)\right)dxdy\\
			&-\frac{1}{2p}\left(\mu_{1}\int_{\R^2}\left|u\right|^{2p}dx+\mu_{2}\int_{\R^2}\left|v\right|^{2p}dx+2\beta\int_{\R^2}\left|uv\right|^{p}dx\right),
		\end{aligned}
	\end{equation*}
	and
	\begin{equation*}
		S\left(c\right):=\left\{w\in E:\int_{\R^2}\left|w\right|^2=c>0\right\}.
	\end{equation*}
	That is, the solutions of problem \eqref{2-NHF} can be obtained as critical points of functional $I(u,v)$ subject to the constraint $S\left(c_1\right)\times S\left(c_2\right)$. 
	
	The definition of {\it critical point} of functional $I(u,v)$ is given as follows.
	\begin{definition}\label{definition of critical point}
		We  say that $(u,v)$ is a critical point of functional $I(u,v)$ subjecting to constraint $S\left(c_1\right)\times S\left(c_2\right)$ if  $(u,v)\in S\left(c_1\right)\times S\left(c_2\right)$ and there exists $\left(\lambda_1,\lambda_2\right)\in\R^2$, which appear as Lagrange multipliers, such that 
		\begin{equation*}
			dI|_{S\left(c_1\right)\times S\left(c_2\right)}(u,v)=dI(u,v)+\lambda_1(u,0)+\lambda_2(0,v)=0.
		\end{equation*}
		Here $dI(u,v)$ is the Fr\'{e}chet derivative of $I$ at $(u,v)$.
	\end{definition}
	The set of all critical points of $I$ constrained on $S(c_1)\times S(c_2)$ is denoted by
	\begin{equation*}
		\mathcal{K}(c_1,c_2):=\left\{(u,v):dI|_{S\left(c_1\right)\times S\left(c_2\right)}(u,v)=(0,0)\right\}.
	\end{equation*} 
	Then, we give the definition of {\it ground state solution} and {\it excited state solution} of problem (\ref{2-NHF}).
	\begin{definition}
		We say that $(\lambda_{1},\lambda_{2},u,v)$ is  a ground state solution of problem \eqref{2-NHF} if it satisfies 
		\begin{equation*}	
			dI|_{S\left(c_1\right)\times S\left(c_2\right)}(u,v)=(0,0)\quad\text{\rm{and}}\quad I(u,v)=\inf_{\mathcal{K}(c_1,c_2)}I(u,v),
		\end{equation*}
		or an excited state solution of problem \eqref{2-NHF} if it satisfies 
		\begin{equation*}	
			dI|_{S\left(c_1\right)\times S\left(c_2\right)}(u,v)=(0,0)\quad\text{\rm{and}}\quad I(u,v)>\inf_{\mathcal{K}(c_1,c_2)}I(u,v).
		\end{equation*}
	\end{definition}
	
	For convenience, throughout this paper we always denote that 
	\begin{equation*}
		Q(u)=\int_{\R^2}\left|\nabla u\right|^2dx,\ 
		P(u)=\int_{\R^2}\left|u\right|^{2p}dx=\lVert u\rVert_{2p}^{2p},\ 
		P_0(u,v)=\int_{\R^2}\left|uv\right|^{p}dx,
	\end{equation*}
	\[
	R(u,v)=\mu_{1}P(u)+\mu_{2}P(v)+2\beta P_0(u,v),\] 
	and
	\[
	W_0(u,v)=\iint_{\R^2\times\R^2}\log\left|x-y\right|\left(u^2(x)+v^2(x)\right)\left(u^2(y)+v^2(y)\right)dxdy.
	\]
	Hence, the functional $I(u,v)$ can be represented as
	\begin{align*}
		I\left(u,v\right)&=\frac{1}{2}\left( Q(u)+Q(v)\right) +\frac{1}{4}W_0(u,v)-\frac{1}{2p}[\mu_{1}P(u)+\mu_{2}P(v)+2\beta P_0(u,v)]\\
		&=\frac{1}{2}\left( Q(u)+Q(v)\right) +\frac{1}{4}W_0(u,v)-\frac{1}{2p}R(u,v). 
	\end{align*}
	To show our main results, Gagliardo-Nirenberg inequality is indispensable. It reads as follows:
	\begin{equation}\label{GNieq}
		\lVert u\rVert_{q}^{q}\leqslant K_q\lVert\nabla u\rVert_{2}^{q-2}\lVert u\rVert_{2}^{2},\quad\text{for any }u\in H^1(\R^2).
	\end{equation}
	Here,  $q>2$ is a constant, and $K_{q}>0$ is the best constant of this inequality which depends only on $q$.
	
	The first result of this paper is presented as following:
	\begin{theorem}\label{thm1}
		Let $\beta>0$ and one of the following assumptions hold:
		\begin{enumerate}[\rm(i)]
			\item  $\max\left\{\mu_{1}+\beta,\mu_{2}+\beta\right\}\leqslant 0$;
			\item $1<p<2$;
			\item $p=2$ and $\min\left\{2-K_{4}\left(\mu_{1}+\beta\right)c_1,2-K_{4}\left(\mu_{2}+\beta\right)c_2\right\}>0$.
		\end{enumerate}
		Here $K_{4}$ is the best constant of the Gagliardo-Nirenberg inequality in \eqref{GNieq}. Then  $I$ is bounded from below on $S\left(c_1\right)\times S\left(c_2\right)$, i.e., 
		\[m=\inf_{S\left(c_1\right)\times S\left(c_2\right)}I\left(u,v\right)>-\infty.
		\]
		and $m$ can be attained. Moreover, 	problem \eqref{2-NHF} has  a ground state solution.
	\end{theorem}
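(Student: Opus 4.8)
The plan is to establish $m>-\infty$ together with coercivity of $I$ in the gradient variables, and then to show that a suitably translated minimizing sequence is precompact in $E\times E$. The essential tool is the splitting of the logarithmic kernel $\log|x-y|=\log(1+|x-y|)-\log\!\big(1+|x-y|^{-1}\big)$, which decomposes $W_0(u,v)=W_1(u,v)-W_2(u,v)$, where (writing $\rho:=u^2+v^2$)
\[
W_1(u,v)=\iint_{\R^2\times\R^2}\log(1+|x-y|)\,\rho(x)\rho(y)\,dx\,dy\ge 0,
\]
and $W_2$ is given by the nonnegative kernel $\log\!\big(1+|x-y|^{-1}\big)$. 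The term $W_2$ is harmless: since this kernel lies in $L^r(\R^2)$ for a suitable $r$, Young's inequality gives $W_2(u,v)\le C\|\rho\|_{4/3}^2$, and \eqref{GNieq} together with the fixed masses $\|u\|_2^2=c_1$, $\|v\|_2^2=c_2$ yields the sublinear bound $W_2(u,v)\le C'\big(\sqrt{Q(u)}+\sqrt{Q(v)}\big)$. For the nonlinearity I will use \eqref{GNieq} in the form $P(u)\le K_{2p}c_1\,Q(u)^{p-1}$, $P(v)\le K_{2p}c_2\,Q(v)^{p-1}$, and $P_0(u,v)\le\tfrac12\big(P(u)+P(v)\big)$, so that for $\beta>0$ one has $R(u,v)\le(\mu_1+\beta)P(u)+(\mu_2+\beta)P(v)$.

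Discarding $W_1\ge0$, this gives $I(u,v)\ge\tfrac12\big(Q(u)+Q(v)\big)-\tfrac14 W_2(u,v)-\tfrac1{2p}R(u,v)$. In case (i) the bound on $R$ forces $R\le0$, so the nonlinear term is favorable; in case (ii) the exponent $p-1<1$ makes $R=o\big(Q(u)+Q(v)\big)$ as the Dirichlet energies grow; in case (iii), where $p=2$, I combine $P(u)\le K_4c_1 Q(u)$ and $P(v)\le K_4c_2 Q(v)$ with the bound on $R$ to obtain
\[
I(u,v)\ge\tfrac14\big[2-K_4(\mu_1+\beta)c_1\big]Q(u)+\tfrac14\big[2-K_4(\mu_2+\beta)c_2\big]Q(v)-\tfrac14 W_2(u,v),
\]
and the hypothesis $2-K_4(\mu_i+\beta)c_i>0$ keeps the coefficients of $Q(u),Q(v)$ strictly positive. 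In every case $I(u,v)\ge c_0\big(Q(u)+Q(v)\big)-C\big(\sqrt{Q(u)}+\sqrt{Q(v)}\big)-C$ with $c_0>0$, whence $m>-\infty$ and $I$ is coercive in the gradients; in particular any minimizing sequence $(u_n,v_n)$ is bounded in $H^1(\R^2)\times H^1(\R^2)$, and since $Q$, $W_2$, $R$ are then bounded along the sequence, so is $W_1(u_n,v_n)$.

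This single boundedness of $W_1$ drives the compactness. First I rule out vanishing: if $\sup_{z\in\R^2}\int_{B_1(z)}\rho_n\to0$, then a covering estimate shows $\iint_{|x-y|\le R}\rho_n(x)\rho_n(y)\,dx\,dy\to0$ for a slowly growing $R=R_n\to\infty$, and hence $W_1(u_n,v_n)\ge\log(1+R_n)\,\big[(c_1+c_2)^2-o(1)\big]\to\infty$, contradicting the boundedness of $W_1$. Therefore there are $z_n\in\R^2$ and $\delta>0$ with $\int_{B_1(z_n)}\rho_n\ge\delta$. Because $Q$, $W_0$ and $R$ are translation invariant, the shifted pair $(\tilde u_n,\tilde v_n):=\big(u_n(\cdot-z_n),v_n(\cdot-z_n)\big)$ is again minimizing and now carries mass at least $\delta$ on $B_1(0)$.

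The concentration on $B_1(0)$ converts the bound on $W_1$ into control of the logarithmic weight: using $\log(1+|x-y|)\ge\log(1+|x|)-\log2$ for $|x|\ge2$ and $y\in B_1(0)$, one gets $\delta\int\log(1+|x|)\tilde\rho_n\,dx\le W_1(\tilde u_n,\tilde v_n)+C$, so $(\tilde u_n,\tilde v_n)$ is bounded in $E\times E$. By the compact embedding of $E$ into $L^s(\R^2)$ for $s\in[2,\infty)$, along a subsequence $\tilde u_n\to u$, $\tilde v_n\to v$ strongly in $L^s$ and weakly in $E$; the $L^2$ convergence preserves the masses, so $(u,v)\in S(c_1)\times S(c_2)$. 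Now $P$, $P_0$, $W_2$ are continuous under this convergence, while $Q$ and (by Fatou, its kernel being nonnegative) $W_1$ are weakly lower semicontinuous, so $I(u,v)\le\liminf I(\tilde u_n,\tilde v_n)=m$, and admissibility forces $I(u,v)=m$; thus $m$ is attained. Finally, a Lagrange-multiplier argument on the $C^1$ constraint $S(c_1)\times S(c_2)$ yields $(\lambda_1,\lambda_2)$ with $dI(u,v)+\lambda_1(u,0)+\lambda_2(0,v)=0$, so $(\lambda_1,\lambda_2,u,v)$ solves \eqref{2-NHF}; since $\mathcal K(c_1,c_2)\subset S(c_1)\times S(c_2)$ and $I(u,v)=m$, this minimizer realizes $\inf_{\mathcal K(c_1,c_2)}I$ and is a ground state. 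The main obstacle is precisely this compactness step: the $E$-norm is never translation invariant, so $H^1$-boundedness alone is insufficient, and the argument stands or falls on extracting a bounded $W_1$ and—after the translation that excludes vanishing—turning it into control of the logarithmic weight.
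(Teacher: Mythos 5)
Your proposal is correct and follows essentially the same route as the paper: the same splitting $W_0=W_1-W_2$ with the bound \eqref{W2estimate} (what you attribute to Young's inequality is really the Hardy--Littlewood--Sobolev estimate, since $\log(1+1/|\cdot|)$ just fails to lie in $L^2(\R^2)$; after splitting the kernel near/far your Young route also works), the same Gagliardo--Nirenberg case analysis for the lower bound, and the same compactness scheme --- translation to defeat vanishing, conversion of the $W_1$-bound into an $E\times E$-bound, compact embedding, and weak lower semicontinuity of $Q$ and $W_1$. Your covering argument excluding vanishing is the contrapositive of the paper's Lemma \ref{varlemma of CJ19}, and your log-weight control is exactly the paper's Lemma \ref{varlemma of CW16}, so the differences are only in packaging (indeed, you spell out the final Lagrange-multiplier/ground-state step slightly more explicitly than the paper does).
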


	Besides, we also want to know the existence when  parameters fall within other ranges.	For the case when all $\mu_{1},\mu_{2},\beta$ are positive and $p>2$, which is called the mass-supercritical case, we  obtain the following result.
	\begin{theorem}\label{thm2}
		Let $\mu_{1},\mu_{2},\beta>0$  and $p>2$. Assume that
		\begin{equation}\label{energyconstrain}
			c_1+c_2<4^{\frac{p-2}{2p-3}}\left[\frac{p(p-2)^{p-2}}{K_{2p}\mu_{0}(p-1)^{p}}\right]^{\frac{1}{2p-3}},
		\end{equation}
		where $\mu_{0}=\max\left\{\mu_{1}+\beta,\mu_{2}+\beta\right\}$ and $K_{2p}$ is the best constant of the Gagliardo-Nirenberg inequality in \eqref{GNieq}. Then the following hold: 
		\begin{enumerate}[\rm(i)]
			\item There exists a solution  $\left(\lambda_{1}^{+},\lambda_{2}^{+},u^{+},v^{+}\right)$ for problem \eqref{2-NHF} such that it is a local minimizer in
			\begin{equation*}
				\mathcal{A}(c_1,c_2):=\left\{(u,v)\in S(c_1)\times S(c_2):\int_{\R^2}\left(\left|\nabla u\right|+\left|\nabla v\right|^2\right)dx<\frac{p-1}{p-2}\frac{\left(c_1+c_2\right)^2}{4}\right\}.
			\end{equation*}
			Moreover, $\left(\lambda_{1}^{+},\lambda_{2}^{+},u^{+},v^{+}\right)$ is a ground state solution.
			\item There exists an excited state solution  $\left(\lambda_{1}^{-},\lambda_{2}^{-},u^{-},v^{-}\right)$ for problem \eqref{2-NHF}.
		\end{enumerate}
	\end{theorem}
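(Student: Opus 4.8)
My approach rests on the mass-preserving scaling $(u,v)\mapsto(u^{t},v^{t})$ with $u^{t}(x)=e^{t}u(e^{t}x)$ and $v^{t}(x)=e^{t}v(e^{t}x)$, which keeps $(u^{t},v^{t})\in S(c_{1})\times S(c_{2})$. A direct computation, using that the rescaling shifts $\log|x-y|$ by the additive constant $-t$, gives
\begin{equation*}
I(u^{t},v^{t})=\frac{e^{2t}}{2}\bigl(Q(u)+Q(v)\bigr)+\frac14 W_{0}(u,v)-\frac{t}{4}(c_{1}+c_{2})^{2}-\frac{e^{(2p-2)t}}{2p}R(u,v).
\end{equation*}
Differentiating at $t=0$, and noting the scaling is tangent to the constraint, produces the Pohozaev identity
\begin{equation*}
Q(u)+Q(v)-\frac{p-1}{p}R(u,v)=\frac{(c_{1}+c_{2})^{2}}{4},
\end{equation*}
which every critical point obeys. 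For fixed $(u,v)$ the fiber $\psi(t):=I(u^{t},v^{t})$ satisfies $\psi(t)\to+\infty$ as $t\to-\infty$, driven by $-\tfrac{t}{4}(c_{1}+c_{2})^{2}$, and $\psi(t)\to-\infty$ as $t\to+\infty$, since $2p-2>2$ lets $-R\,e^{(2p-2)t}$ dominate. The smallness condition \eqref{energyconstrain} is exactly what forces $\psi$ into a two-critical-point shape, a local minimum followed by a local maximum, once $R$ is controlled by Gagliardo--Nirenberg.

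For part (i) the plan is to minimise $I$ over $\mathcal A(c_{1},c_{2})$. Bounding $P_{0}\le\frac12\bigl(P(u)+P(v)\bigr)$ by AM--GM and then applying \eqref{GNieq} with $q=2p$ yields
\begin{equation*}
R(u,v)\le\mu_{0}K_{2p}(c_{1}+c_{2})\bigl(Q(u)+Q(v)\bigr)^{p-1},
\end{equation*}
so on the set $Q(u)+Q(v)<\frac{p-1}{p-2}\frac{(c_{1}+c_{2})^{2}}{4}$ the negative nonlinear term is dominated by the kinetic term; combined with the lower bound for $W_{0}$ available on the working space $E$, this shows $I$ is bounded below on $\mathcal A$. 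The decisive point is the strict inequality $m^{+}:=\inf_{\mathcal A}I<\inf_{\partial\mathcal A}I$, which \eqref{energyconstrain} is designed to guarantee: a minimising sequence then stays away from the boundary, so its limit $(u^{+},v^{+})$ lies in the interior of $\mathcal A$ and is a genuine constrained critical point with Lagrange multipliers $(\lambda_{1}^{+},\lambda_{2}^{+})$. A comparison of energy levels identifies it as a ground state.

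For part (ii) I would set up a mountain pass across the barrier $\partial\mathcal A$. Pushing $(u^{+},v^{+})$ along its fiber to large $t$ reaches points with $I<m^{+}$, and any path joining $(u^{+},v^{+})$ to such a point must cross $\partial\mathcal A$; hence the min-max level $m^{-}:=\inf_{\gamma}\max_{s}I(\gamma(s))$ satisfies $m^{-}\ge\inf_{\partial\mathcal A}I>m^{+}$, a genuine mountain pass. To obtain a bounded Palais--Smale sequence I would run the min-max on the augmented functional $(t,u,v)\mapsto I(u^{t},v^{t})$ over the product $\R\times S(c_{1})\times S(c_{2})$ in the spirit of Jeanjean; this produces a Palais--Smale sequence for $I$ at level $m^{-}$ that asymptotically satisfies the Pohozaev identity. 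The latter, together with the energy bound and the coercive positive part of $W_{0}$ on $E$, forces boundedness in $E\times E$ and keeps $Q(u_{n})+Q(v_{n})$ bounded away from zero, ruling out vanishing.

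The main obstacle throughout is compactness. The functional is nonlocal through the sign-changing logarithmic convolution $W_{0}$, and the norm of $E$ is not translation invariant, so the classical concentration-compactness dichotomy cannot be applied verbatim. I would invoke the compactness method developed earlier in the paper: the weight $\log(1+|x|^{2})$ in the definition \eqref{SpaceE} of $E$ yields compact embeddings of $E$ into the relevant Lebesgue spaces, which together with the Pohozaev constraint prevent both vanishing and splitting of the (sub)sequences. Passing to the weak limit then relies on lower semicontinuity of the positive part of $W_{0}$ for the nonlocal term and on the compact embedding for the term $R$, identifying the limit as a nontrivial critical point at the prescribed energy and completing both parts.
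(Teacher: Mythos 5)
Your strategy is sound in outline, and for part (i) it parallels the paper (minimization over $\mathcal{A}(c_1,c_2)$ with the same Gagliardo--Nirenberg bound on $R$ and translation-based compactness); for part (ii) you take a genuinely different route, a Jeanjean-type augmented-functional mountain pass across $\partial\mathcal{A}(c_1,c_2)$, whereas the paper runs Ghoussoub's minimax for the auxiliary functional $I^{-}(u,v)=I(\varphi(t^{-}_{u,v},u),\varphi(t^{-}_{u,v},v))$ and localizes the Palais--Smale sequence directly on $\Omega^{-}(c_1,c_2)$ (Lemmas \ref{lemme1}--\ref{lemme5}). However, two steps on which your whole argument leans are asserted rather than proved. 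The first is the Poho\v{z}aev identity. You obtain it by differentiating $t\mapsto I(u^{t},v^{t})$ at $t=0$ and pairing with criticality, but for a weak solution $(u,v)\in E\times E$ this chain rule is not available: the curve $t\mapsto(u^{t},v^{t})$ is in general not differentiable in $E\times E$, since its formal derivative involves $x\cdot\nabla u$, which need not lie in $L^{2}(\R^{2})$, let alone in $E$. Thus the statement that every constrained critical point satisfies $\M(u,v)=0$ is unproven in your proposal, and it is indispensable: it is what gives $\mathcal{K}(c_1,c_2)\subset\Omega(c_1,c_2)$, hence the identification of $(u^{+},v^{+})$ as a ground state, the conclusion that the min-max solution is an excited state, and the boundedness of your Palais--Smale sequence. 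The paper does the real work in Appendix A: $C^{2}$ regularity of solutions, multiplication by $x\cdot\nabla u$ and $x\cdot\nabla v$, integration over $B_{R}(0)$ with boundary terms killed along a sequence $R_{n}\to\infty$, and evaluation of the nonlocal term via the symmetrization $\frac{|x|^{2}-x\cdot y}{|x-y|^{2}}+\frac{|y|^{2}-x\cdot y}{|x-y|^{2}}=1$, which is precisely what produces the term $\frac{1}{4}(c_1+c_2)^{2}$ in $\M$. Some argument of this type must be supplied.

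The second gap is the strict barrier inequality $\inf_{\mathcal{A}(c_1,c_2)}I<\inf_{\partial\mathcal{A}(c_1,c_2)}I$, which you yourself call the decisive point and then simply attribute to \eqref{energyconstrain}. Both halves of your proof stand on it: without it your minimizing sequence in (i) may converge weakly to a boundary point (which is then not an interior local minimizer, hence not a critical point), and your mountain-pass level in (ii) may collapse onto $\inf_{\mathcal{A}}I$, destroying both the geometry and the excited-state comparison. The inequality is true, but proving it is exactly the quantitative fiber analysis you skipped: on $\partial\mathcal{A}(c_1,c_2)$ one has $Q(u)+Q(v)=\frac{p-1}{p-2}\frac{(c_1+c_2)^{2}}{4}$, and inserting your own bound $R(u,v)\leqslant K_{2p}\mu_{0}(c_1+c_2)\left(Q(u)+Q(v)\right)^{p-1}$ into $f_{u,v}(1)=\M(u,v)$ shows that $f_{u,v}(1)$ is bounded below by a constant that is positive if and only if \eqref{energyconstrain} holds; combined with the two-zero structure of $f_{u,v}$ (Proposition \ref{proposition}, Lemma \ref{lemma1}) this yields $t^{+}_{u,v}\leqslant 1-\delta$ uniformly on $\partial\mathcal{A}(c_1,c_2)$ and hence a uniform gap $I(u,v)\geqslant\tilde m+\varepsilon_{0}$ there. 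In other words, the "two-critical-point shape" of the fibers and the barrier gap are not consequences you may quote; they are the content of the paper's Section 4 (Lemmas \ref{lemma1}--\ref{lemma5}), and the paper's part (ii) is engineered precisely to avoid needing the uniform gap, obtaining the strict comparison $m_1<m_2$ from Lemma \ref{lemma5} once $m_2$ is attained. Your plan can be completed, but only after adding the Poho\v{z}aev proof and these fiber/barrier estimates.
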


	Although the results presented here for the coupled system are analogous to those in \cite{CJ19} for a single equation, it is important to highlight  that our approach is quite different. Specifically, the challenge arises because our problem involves a coupled system. The compactness discussion in \cite{CJ19} does not apply directly to our case, as the functional is not invariant under different translations on two component functions, respectively. In essence, our approach to overcoming this difficulty is to bound the logarithmic convolution term,  identify a weakly convergent subsequence such that remains invariance of the functional under the same translation on both component functions. We believe that this kind of discussion will be useful for some similar problems involving logarithmic convolution terms.
	
	To deal with Theorem \ref{thm2},  our approach primarily relies on subtle analysis to find a special Palais-Smale sequence at different energy level, which is Inspired by \cite{CJ19,Soave20,DW2024}. Here, we establish a new Poho\v{z}aev identity for the coupled  Schr\"{o}dinger-Poisson system with a logarithmic convolution term, which is crucial to build up the energy estimate. 
	Finally, the classical minimax principle, combining with the Poho\v{z}aev identity and the compactness discussion we establish, permits us to find an excited solution.

	The rest of the paper is organized as follows: section \ref{sec2} introduces the variational setting and some preliminaries. Subsequently, we prove the  compactness and obtain a global minimizer of the functional in section \ref{sec3}.  Section \ref{sec4} is devoted  to describing the geometry of the functional through a certain fiber map and properties of Poho\v{z}aev-Nehari manifold.  Finally, the proof of Theorem \ref{thm2} is established in the section \ref{sec6}. 
	\section{Variational setting and preliminaries}\label{sec2}
	In this section, we will introduce the variational setting and some properties of the functional. Throughout this paper we always assume that $p>1$.
	We first give some notations as follows:
	\begin{itemize}
		\item $\lVert\cdot\rVert_{s}$ denotes the usual norm of $L^{s}(\R^2)$ space.
		\item The Sobolev space $H^1(\R^2)$ is a Hilbert space equipped with the inner product $\langle u,v\rangle_{H^1}=\int_{\R^2}\left(\nabla u\cdot\nabla v+uv\right) dx$ and norm $\lVert u\rVert_{H^1}=\left( \int_{\R^2}\left( \left|\nabla u\right|^2+u^2\right) dx\right) ^{\frac{1}{2}}$.
		\item We denote by $\rightarrow$ and $\rightharpoonup$ the strong convergence and the weak convergence on Hilbert space, respectively.
		\item The relation "$a\lesssim b$" means that there exists a constant $C>0$ such that $a\leqslant Cb$. 
		\item $B_r(x)$ denotes the ball in $\R^2$ with radial $r$ centering on $x$.  
	\end{itemize}

	Let $E$ be the Hilbert space  defined by \eqref{SpaceE}, 
	which is equipped with the inner product $\langle\cdot,\cdot\rangle_E=\langle\cdot,\cdot\rangle_{H^1}+\langle\cdot,\cdot\rangle_0$ and  the norm $\lVert \cdot\rVert_{E}^{2}=\lVert \cdot\rVert_{H^1}^2+\lVert \cdot\rVert_{0}^{2}$. Here, 
	\begin{equation*}
		\langle u,v\rangle_0=\int_{\R^2}\log(1+|x|)u(x)v(x)dx
	\end{equation*}
	and 
	\begin{equation*}
		\lVert u\rVert_0^2=\int_{\R^2}\log(1+|x|)u^2(x)dx
	\end{equation*}
	for any $u,v\in E$. We introduce following symmetric bilinear forms, 
	\begin{equation*}
		\begin{aligned}
			& B_1\left(u,v\right):=\iint_{\R^2\times\R^2}\log\left(1+\left|x-y\right|\right)u(x)v(y)dxdy,\\
			& B_2\left(u,v\right):=\iint_{\R^2\times\R^2}\log\left(1+\frac{1}{\left|x-y\right|}\right)u(x)v(y)dxdy,\\
			& B_0\left(u,v\right):=B_1-B_2=\iint_{\R^2\times\R^2}\log\left|x-y\right|u(x)v(y)dxdy.
		\end{aligned}
	\end{equation*}
	We define the following functionals:
	\begin{equation*}
		\begin{aligned}
			&V_1:H^1\big(\mathbb{R}^2\big)\to[0,\infty],
			&&V_1(u):=B_1\left(u^2,u^2\right)=\iint_{\R^2\times\R^2}\log\left(1+\left|x-y\right|\right)u^2(x)u^2(y)dxdy,\\
			&V_2:L^{\frac83}\big(\mathbb{R}^2\big)\to[0,\infty),
			&&V_2(u):=B_2\left(u^2,u^2\right)=\iint_{\R^2\times\R^2}\log\left(1+\frac{1}{\left|x-y\right|}\right)u^2(x)u^2(y)dxdy,\\	
			&V_0:H^1(\mathbb{R}^2)\to\mathbb{R}\cup\{\infty\},
			&&V_0(u):=V_1(u)-V_2(u)=\iint_{\R^2\times\R^2}\log\left|x-y\right|u^2(x)u^2(y)dxdy.
		\end{aligned}
	\end{equation*}
	The classical Hardy-Littlewood-Sobolev inequality for $\R^2$  (HLS inequality for short)  reads (cf.\cite{LL01b})
	\begin{equation}\label{HLS inequality}
		\left|\iint_{\R^2\times\R^2}f(x)|x-y|^{-\lambda}h(y)dxdy\right|\leqslant C_{\lambda,p}\lVert f\rVert_{p}\lVert h\rVert_{r},\quad\text{for any }f\in L^p(\R^2), h\in L^r(\R^2), 
	\end{equation}
	where $p,r>1$, $0<\lambda<2$ and $\frac{1}{p}+\frac{\lambda}{2}+\frac{1}{r}=2$. Let $p=r=\frac{4}{3}$ and $\lambda=1$ in HLS inequality. Then, combining HLS inequality and the fact that $\log\left(1+s\right)\leqslant s$ for any $s>0$, we can deduce that 
	\begin{align}
		\label{V2}	\left|V_2(u)\right|&\lesssim\lVert u\rVert_{\frac{8}{3}}^{4},\quad \text{for any }u\in L^{\frac{8}{3}}(\R^2),\\
		\label{B2}	\left|B_2\left(u^2,v^2\right)\right|&\lesssim\lVert u\rVert_{\frac{8}{3}}^{2}\lVert v\rVert_{\frac{8}{3}}^{2},\quad \text{for any }u,v\in L^{\frac{8}{3}}(\R^2).
	\end{align}

	Next, we define
	\begin{align*}
		W_1\left(u,v\right):&=\iint_{\R^2\times\R^2}\log\left(1+\left|x-y\right|\right)\left(u^2(x)+v^2(x)\right)\left(u^2(y)+v^2(y)\right)dxdy,\\
		W_2\left(u,v\right):&=\iint_{\R^2\times\R^2}\log\left(1+\frac{1}{\left|x-y\right|}\right)\left(u^2(x)+v^2(x)\right)\left(u^2(y)+v^2(y)\right)dxdy,\\
		W_0\left(u,v\right):&=W_1-W_2=\iint_{\R^2\times\R^2}\log\left|x-y\right|\left(u^2(x)+v^2(x)\right)\left(u^2(y)+v^2(y)\right)dxdy.
	\end{align*}
	A simple calculation leads to 
	\begin{equation}\label{decomposition}
		W_i\left(u,v\right)=V_i(u)+V_i(v)+2B_i\left(u^2,v^2\right),\quad\text{for }i=0,1,2.
	\end{equation}
	Then, from \eqref{V2} and \eqref{B2}, one has
	\begin{equation}\label{W2estimate}
		\left|W_2(u,v)\right|\lesssim\left(\lVert u\rVert_{\frac{8}{3}}^2+\lVert v\rVert_{\frac{8}{3}}^2\right)^2.
	\end{equation}
	By Sobolev embedding , the functional $W_2(u,v)$ is well-defined on $H^{1}(\R^2)\times H^{1}(\R^2)$, but the functional $W_1(u,v)$ is not. 
	Consequently, for our problem, the functional $I(u,v)$ is not well-defined on $H^1(\R^2)\times H^1(\R^2)$. Inspired by \cite{CW16,DW2017,CJ19}, we introduce the product space $E\times E$ equipped with the norm $\left\|(\cdot,\cdot)\right\|^2=\left\|\cdot\right\|_E^2+\left\|\cdot\right\|_E^2$. Indeed, from \eqref{decomposition}, for $u,v\in E$, 
	\begin{equation*}
		\begin{aligned}
			W_1\left(u,v\right)&=V_1(u)+V_1(v)+2B_1\left(u^2,v^2\right)\\
			&=\iint_{\R^2\times\R^2}\log\left(1+\left|x-y\right|\right)\left[ u^2(x)u^2(y)+ v^2(x)v^2(y) + 2u^2(x)v^2(y) \right] dxdy\\
			&\leqslant\iint_{\R^2\times\R^2}\left[\log\left(1+\left|x\right|\right)+\log\left(1+\left|y\right|\right)\right]\left[ u^2(x)u^2(y)+ v^2(x)v^2(y) + 2u^2(x)v^2(y) \right]dxdy\\
			&=2\left( \left\|u\right\|_0^2\left\|u\right\|_2^2+\left\|v\right\|_0^2\left\|v\right\|_2^2\right) +\left\|u\right\|_0^2\left\|v\right\|_2^2+\left\|v\right\|_0^2\left\|u\right\|_2^2.
		\end{aligned}
	\end{equation*}
	Therefore, the functional $W_1(u,v)$ only takes finite value on $E\times E$, and so the energy functional $I$ is well-defined on $E\times E$. Moreover, we show that the logarithmic convolution functional is differentiable on such space, with some additional continuous properties.

	\begin{lemma}\label{lemmaone}
		
		The following properties hold:
		\begin{enumerate}[{\rm (i)}]
			\item The space $E\times E$ is compactly embedded in $L^s(\R^2)\times L^t(\R^2)$ for each $s,t\in\left[2,\infty\right)$;
			\item The functionals $(u,v)\mapsto B_i\left(u^2,v^2\right)$, $i=0,1,2$ and $I$ are of class $C^1(E\times E,\R)$. Moreover, $\langle dB_i\left(u^2,v^2\right),(\phi,\psi)\rangle=2\left( B_i\left(u\phi,v^2\right)+B_i\left(u^2,v\psi\right)\right)$, for $i=0,1,2$;
			\item The functionals $(u,v)\mapsto B_1\left(u^2,v^2\right)$ and $W_1(u,v)$ are weakly lower semicontinuous on $H^1(\R^2)\times H^1(\R^2)$;
			\item The functional $(u,v)\mapsto B_2\left(u^2,v^2\right)$ and $W_2(u,v)$ are continuous on $L^{\frac{8}{3}}(\R^2)\times L^{\frac{8}{3}}(\R^2)$ and weakly continuous on $E\times E$.
		\end{enumerate}
	\end{lemma}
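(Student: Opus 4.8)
The plan is to establish the four assertions in the order (i), (iii), (iv), (ii), since the differentiability claim in (ii) draws on the embedding and the continuity properties proved first. For (i), I would begin from the pointwise bound $\log(1+|x|^2)\geq 0$, which gives a continuous inclusion $E\hookrightarrow H^1(\R^2)$; thus any $E$-bounded sequence $(u_n)$ is $H^1$-bounded and, along a subsequence, converges weakly in $E$ to some $u$ and strongly in $L^s_{\mathrm{loc}}(\R^2)$ for every $s\in[2,\infty)$ by Rellich--Kondrachov. The logarithmic weight then controls the tails uniformly: on $\{|x|>R\}$ one has $\int_{|x|>R}u_n^2\leq(\log(1+R^2))^{-1}\|u_n\|_0^2=O((\log R)^{-1})$, and for $s>2$ the interpolation $\|u_n\|_{L^s(|x|>R)}\leq\|u_n\|_{L^2(|x|>R)}^{\theta}\|u_n\|_{L^r(|x|>R)}^{1-\theta}$, with $r>s$ and the second factor bounded via $H^1\hookrightarrow L^r$, transfers this smallness to $L^s$. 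Combining uniform tail smallness with local strong convergence yields $u_n\to u$ in $L^s(\R^2)$, i.e.\ the compact embedding.

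Assertions (iii) and (iv) exploit the signs of the two kernels. For (iii), the kernel $\log(1+|x-y|)$ is nonnegative, so if $u_n\rightharpoonup u$ and $v_n\rightharpoonup v$ in $H^1(\R^2)$ then, passing to a subsequence realizing the $\liminf$ and converging a.e., the integrands $\log(1+|x-y|)u_n^2(x)v_n^2(y)$ are nonnegative and converge a.e.; Fatou's lemma then gives $\liminf B_1(u_n^2,v_n^2)\geq B_1(u^2,v^2)$, and weak lower semicontinuity of $W_1$ follows from the decomposition \eqref{decomposition}. For (iv), the HLS bounds \eqref{V2}--\eqref{B2} control $B_2(u^2,v^2)$ purely by $L^{8/3}$-norms; writing $B_2(u_n^2,v_n^2)-B_2(u^2,v^2)=B_2(u_n^2-u^2,v_n^2)+B_2(u^2,v_n^2-v^2)$ and using $\|u_n^2-u^2\|_{4/3}\leq\|u_n-u\|_{8/3}\|u_n+u\|_{8/3}$ gives continuity on $L^{8/3}(\R^2)\times L^{8/3}(\R^2)$. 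Weak continuity on $E\times E$ is then immediate, because (i) upgrades weak $E$-convergence to strong $L^{8/3}$-convergence.

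The last and most delicate point is the $C^1$ claim (ii). The Gâteaux derivative is read off from bilinearity: expanding $B_i((u+t\phi)^2,v^2)=B_i(u^2,v^2)+2tB_i(u\phi,v^2)+t^2B_i(\phi^2,v^2)$ and dividing by $t$ produces the stated formula. The crux is that, unlike $B_2$, the form $B_1$ carries the \emph{growing} weight $\log(1+|x-y|)$ and is therefore not controlled by $L^{8/3}$-norms. Here I would exploit the subadditivity $\log(1+|x-y|)\leq\log(1+|x|)+\log(1+|y|)$ to split, via weighted Cauchy--Schwarz, $|B_1(u\phi,v^2)|\leq\|u\|_0\|\phi\|_0\|v\|_2^2+\|u\|_2\|\phi\|_2\|v\|_0^2$, which shows the derivative is a bounded functional on $E\times E$. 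Continuity of the derivative along strongly convergent sequences $(u_n,v_n)\to(u,v)$ in $E\times E$ follows from the same splitting: for instance $\int_{\R^2}\log(1+|y|)|v_n^2-v^2|\,dy\leq\|v_n-v\|_0\|v_n+v\|_0\to0$ and $\int_{\R^2}|v_n^2-v^2|\,dy\leq\|v_n-v\|_2\|v_n+v\|_2\to0$, with the analogous estimates in the $u$-slot. Combined with the standard $C^1$ theory for the quadratic term $Q$ and for the Nemytskii nonlinearities in $R(u,v)$, which are differentiable on $E$ through the compact embedding (i) into $L^{2p}(\R^2)$, this yields $I\in C^1(E\times E,\R)$. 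The main obstacle is precisely controlling the unbounded logarithmic weight in $B_1$ by the $E$-norm rather than by Sobolev norms, which is what forces the use of the subadditivity splitting throughout.
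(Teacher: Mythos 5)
Your proposal is correct and, in parts (ii)--(iv), follows essentially the paper's own argument: the G\^ateaux derivative is read off from bilinearity, its continuity in the $B_1$-slot is obtained from the subadditivity splitting $\log(1+|x-y|)\leqslant\log(1+|x|)+\log(1+|y|)$ together with Cauchy--Schwarz in the weighted and unweighted $L^2$-norms, $B_2$ is handled by the HLS bound, (iii) is a Fatou argument, and (iv) combines HLS continuity on $L^{8/3}(\R^2)$ with the compact embedding. The genuine divergence is in (i): the paper simply cites \cite{CW16} for the compact embedding $E\hookrightarrow L^s(\R^2)$ and passes to the product space, whereas you reprove it from scratch (Rellich--Kondrachov locally, uniform tail control $\int_{|x|>R}u_n^2\lesssim(\log R)^{-1}\lVert u_n\rVert_0^2$ from the logarithmic weight, then interpolation to upgrade $L^2$-tail smallness to $L^s$); this buys self-containedness at the cost of length, and the mathematics is sound. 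Similarly, in (iii) you apply Fatou directly on $\R^2\times\R^2$ after extracting a subsequence converging a.e.\ (which implicitly needs a diagonal argument over balls), while the paper applies Fatou on $B_r(0)\times B_r(0)$ and then lets $r\to\infty$ via monotone convergence; the two are equivalent in substance. One point to tighten: in (ii) you verify continuity of the derivative only for the $B_1$-part and assert the rest follows ``from the same splitting,'' but the log-splitting does not apply to the kernel $\log\left(1+\frac{1}{|x-y|}\right)$; the $B_2$-part needs the separate HLS estimate (exactly the one you already set up in (iv), and the one the paper carries out explicitly), so this is a gap in exposition rather than in the method.
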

	\begin{proof}
		\hspace*{\fill}
		\begin{enumerate}[{\rm (i)}]
			\item
			From \cite[Lemma 2.2(i)]{CW16}, the space $E$ is compactly embedded into $L^p(\R)$, for each $p\in[2,\infty)$. Then the conclusion is obtained because the product space of two compact spaces is also compact.
			
			\item Computing it directly, we get 
			\begin{equation*}
				\begin{aligned}
					\lim_{t\to0}\frac{B_i\left(\left(u+t\phi\right)^2,\left(v+t\psi\right)^2\right)-B_i\left(u^2,v^2\right)}{t}=2B_i\left(u\phi,v^2\right)+2B_i\left(u^2,v\psi\right).
				\end{aligned}
			\end{equation*}
			Let $\left(u_n,v_n\right)\to(u,v)$ in $E\times E$. Using the H\"older's  inequality and the fact that $\log(1+|x-y|)\leqslant\log(1+|x|)+\log(1+|y|)$, we have that, for every $\phi\in E$,
			\begin{equation*}
				\begin{aligned}
					&\quad\left|B_1(u_n\phi,v_{n}^2)-B_1(u\phi,v^2)\right|\\
					&\leqslant\iint_{\R^2\times\R^2}\log(1+|x-y|)\left|u_n(x)v_{n}^2(y)-u(x)v^{2}(y)\right| \left|\phi(x)\right|dxdy\\
					&\leqslant\iint_{\R^2\times\R^2}\log(1+|x-y|)\left[\left|u_n(x)-u(x)\right|v_{n}^2(y)+\left|v_{n}^2(y)-v^2(y)\right|\left|u(x)\right|\right]\left|\phi(x)\right|dxdy\\
					&\leqslant\iint_{\R^2\times\R^2}\left[\log(1+|x|)+\log(1+|y|)\right]\left[\left|u_n(x)-u(x)\right|v_{n}^2(y)+\left|v_{n}^2(y)-v^2(y)\right|\left|u(x)\right|\right]\left|\phi(x)\right|dxdy\\
					&\leqslant\lVert u_n-u\rVert_{0}\lVert\phi\rVert_{0}\lVert v_n\rVert_{2}^2+\lVert u\rVert_{0}\lVert\phi\rVert_{0}\lVert v_n-v\rVert_{2}\lVert v_n+v\rVert_{2}\\
					&\quad+\lVert u_n-u\rVert_{2}\lVert\phi\rVert_{2}\lVert v_n\rVert_{0}^2+\lVert v_n-v\rVert_{0}\lVert v_n+v\rVert_{0}\lVert u\rVert_{2}\lVert \phi\rVert_{2}\rightarrow0.
				\end{aligned}
			\end{equation*} 
			Similarly, we also obtain that, for each $\psi\in E$,  $\left|B_1\left(u_{n}^2,v_n\psi\right)-B_1\left(u^2,v\psi\right)\right|\to0$ as $n\to\infty$, which indicates that the Gatêaux derivative of $B_1$ is continuous on $E\times E$. Thus, $B_1\left(u^2,v^2\right)$ is of class $C^1(E\times E,\R)$. 
			
			By HLS inequality (\ref{HLS inequality}) and compact embedding(\cite[Lemma 2.2]{CW16}),  one has that, for every $\phi\in E$,
			\begin{align*}
				&\quad\left|B_2(u_n\phi,v_{n}^2)-B_2(u\phi,v^2)\right|\\
				&=\iint_{\R^\times \R^2}\log(1+\frac{1}{|x-y|})\left|u_n(x)v_{n}^2(y)-u(x)v^2(y)\right|\left|\phi(x)\right|dxdy\\
				&\leqslant\iint_{\R^2\times\R^2}\frac{1}{|x-y|}\left[\left|u_n(x)-u(x)\right|v_{n}^2(y)+\left|v_{n}^2(y)-v^2(y)\right|\left|u(x)\right|\right]\left|\phi(x)\right|dxdy\\
				&\lesssim \left(\int_{\R^2}\left|u_n(x)-u(x)\right|^{\frac{4}{3}}\left|\phi(x)\right|^{\frac{4}{3}}\right)^{\frac{3}{4}}\lVert v_n\rVert_{\frac{8}{3}}^2+\left(\int_{\R^2}\left|u(x)\right|^{\frac{4}{3}}\left|\phi(x)\right|^{\frac{4}{3}}dx\right)^{\frac{3}{4}}\left(\int_{\R^2}\left|v_{n}^2(y)-v^2(y)\right|^{\frac{4}{3}}dy\right)^{\frac{3}{4}}\\
				&\lesssim\lVert u_n-u\rVert_{\frac{8}{3}}\lVert\phi\rVert_{\frac{8}{3}}\lVert v_n\rVert_{\frac{8}{3}}^2+\lVert u\rVert_{\frac{8}{3}}\lVert \phi\rVert_{\frac{8}{3}}\lVert v_n+v\rVert_{\frac{8}{3}}\lVert v_n-v\rVert_{\frac{8}{3}}\to 0.
			\end{align*}
			Same as above discussion, $B_2(u^2,v^2)$ is of class $C^1$. Consequently, $B_0(u^2,v^2)$ is of class $C^1(E\times E,\R)$. Then by \cite[Lemma 2.2(ii)]{CW16} and (\ref{decomposition}),  we obtain that $W_i(u,v)$ are of class $C^1(E\times E,\R)$ for $i=0,1,2$, whence $I(u,v)$ is of class $C^1(E\times E,\R)$.
			\item	 Suppose that $(u_n,v_n)\rightharpoonup(u,v)$ in $H^1(\R^2)\times H^1(\R^2)$. Then, up to subsequence, $(u_n,v_n)\to(u,v)$ in $L^2(B_r(0))\times L^2(B_r(0))$ and $(u_n,v_n)\to(u,v)$ a.e. on $B_r(0)\times B_r(0)$ for fixed $r>0$. By Fatou's Lemma, we have 
			\begin{equation*}
				\liminf_{n\to\infty}\iint_{B_r(0)\times B_r(0)}\log(1+|x-y|)u_{n}^{2}(x)v_{n}^{2}(y)dxdy\geqslant\iint_{B_r(0)\times B_r(0)}\log(1+|x-y|)u^2(x)v^2(y)dxdy.
			\end{equation*}
			It follows that 
			\begin{equation*}        \liminf_{n\to\infty}B_1(u_{n}^2,v_{n}^2)\geqslant\iint_{B_r(0)\times B_r(0)}\log(1+|x-y|)u^2(x)v^2(y)dxdy.
			\end{equation*}
			Using monotone convergence theorem, we derive that
			\begin{equation*}
				\lim_{r\to\infty}\iint_{B_r(0)\times B_r(0)}\log(1+|x-y|)u^2(x)v^2(y)dxdy=B_1(u^2,v^2)
			\end{equation*}
			and 
			\begin{equation*}
				\liminf_{n\to\infty}B_1(u_{n}^2,v_{n}^2)\geqslant B_1(u^2,v^2).
			\end{equation*}
			Hence, the functional $(u,v)\to B_1(u_{n}^2,v_{n}^2)$ is weakly lower semicontinuous on $H^1(\R^2)\times H^1(\R^2)$.   From \cite[Lemma2.2(iv)]{CW16} and (\ref{decomposition}), $W_1(u,v)$ is also weakly lower semicontinuous on $H^1(\R^2)\times H^1(\R^2)$.
			
			\item It suffices to use HLS(\ref{B2}) to deduce that the functional $ B_2(u^2,v^2)$ are continuous on $L^{\frac{8}{3}}(\R^2)\times L^{\frac{8}{3}}(\R^2)$. Moreover, combining with (\ref{decomposition}) and \cite[Lemma2.2(iii)]{CW16}, the functional $W_2(u,v)$ is also continuous on $L^{\frac{8}{3}}(\R^2)\times L^{\frac{8}{3}}(\R^2)$. The weak continuity of $B_2(u^2,v^2)$ and $W_2(u,v)$ follows immediately from the compact embedding(Lemma \ref{lemmaone}(i)). \qedhere
			
		\end{enumerate}
	\end{proof}
	
	
		\section{Existence of the global minimizer}\label{sec3}
		\subsection{Compactness results}
		We explore the compactness of approximate sequence of the variational problem. The following lemma is a general version of Lemma 2.1 in \cite{CW16}.
		\begin{lemma}\label{varlemma of CW16}
			Let $\left\{\rho_n\right\}$ be a sequence of positive functions defined on $\R^2$ satisfying that there exist $r>0$ and $\sigma>0$ such that $\int_{B_r(0)}\rho_ndx>\sigma$ for every $n\in \mathbb{N}$. In addition, let $\left\{u_n\right\}$ be a sequence bounded in $L^2(\R^2)$ such that 
			\begin{equation*}
				\sup_{n\in\mathbb{N}}B_1\left(\rho_n,u_{n}^2\right)<+\infty.
			\end{equation*}
			Then $$\int_{\R^2}\log(1+|x|)u_{n}^2(x)dx$$ is bounded for every $n$.
			
			If, moreover,
			\begin{equation*}
				B_1(\rho_n,u_{n}^2)\to 0 \text{ and } \lVert u_n\rVert_2\to 0.
			\end{equation*}
			Then $$\int_{\R^2}\log(1+|x|)u_{n}^2(x)dx\to 0$$ 
			as $n\to \infty$.
		\end{lemma}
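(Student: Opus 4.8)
The plan is to reduce everything to an elementary pointwise inequality for the logarithm and then exploit the symmetry and nonnegativity of the kernel $\log(1+|x-y|)$. The starting point is the observation that for any $y\in B_r(0)$ one has $1+|x|\le 1+|x-y|+|y|\le (1+r)(1+|x-y|)$, whence
\[
\log(1+|x|)\le \log(1+r)+\log(1+|x-y|),\qquad \text{for all } x\in\R^2,\ y\in B_r(0).
\]
This is the only place where the geometry enters, and it is the conceptual heart of the argument.

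First I would integrate this inequality against $\rho_n(y)$ over $B_r(0)$. Writing $a_n:=\int_{B_r(0)}\rho_n\,dy$, the hypothesis gives $a_n>\sigma>0$, so after dividing by $a_n$ and bounding $a_n^{-1}\le \sigma^{-1}$ on the last term we obtain
\[
\log(1+|x|)\le \log(1+r)+\frac{1}{\sigma}\int_{B_r(0)}\rho_n(y)\log(1+|x-y|)\,dy.
\]
Next I would multiply by $u_n^2(x)\ge 0$ and integrate in $x$ over $\R^2$. Using the symmetry of the kernel $\log(1+|x-y|)$ in $(x,y)$, together with its nonnegativity and $\rho_n,u_n^2\ge 0$, the inner region $B_r(0)$ may be enlarged to all of $\R^2$, so the resulting double integral is dominated by $B_1(\rho_n,u_n^2)$. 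This yields the single key estimate
\[
\int_{\R^2}\log(1+|x|)u_n^2(x)\,dx\le \log(1+r)\,\lVert u_n\rVert_2^2+\frac{1}{\sigma}B_1(\rho_n,u_n^2).
\]

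Both conclusions now follow at once from this inequality. For the first claim, the right-hand side is uniformly bounded in $n$ because $\{u_n\}$ is bounded in $L^2(\R^2)$ and $\sup_n B_1(\rho_n,u_n^2)<+\infty$ by hypothesis. For the second claim, under the assumptions $\lVert u_n\rVert_2\to 0$ and $B_1(\rho_n,u_n^2)\to 0$ both terms on the right tend to zero, forcing $\int_{\R^2}\log(1+|x|)u_n^2\,dx\to 0$. The only point requiring care is the passage from $B_r(0)$ to $\R^2$ in the double integral, where nonnegativity of both the kernel and of the functions is essential; since no compactness or convergence argument is needed, I do not anticipate any genuine obstacle beyond keeping track of the constants.
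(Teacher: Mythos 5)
Your proof is correct, and it is in essence the same argument as the paper's: both exploit the mass lower bound $\int_{B_r(0)}\rho_n\,dx>\sigma$ together with a pointwise logarithmic inequality to establish an estimate of the form $\lVert u_n\rVert_0^2\lesssim B_1(\rho_n,u_n^2)+C(r)\lVert u_n\rVert_2^2$, from which both conclusions are immediate. The difference is in execution. The paper bounds $B_1(\rho_n,u_n^2)$ \emph{from below}, using $1+|x-y|\geqslant\sqrt{1+|y|}$, which is only valid for $x\in B_r(0)$ and $y\notin B_{2r}(0)$; this forces a splitting of the outer integral into $B_{2r}(0)$ and its complement, with the inner piece absorbed into $\log(1+2r)\lVert u_n\rVert_2^2$, and costs a factor $\tfrac{1}{2}$ in front of $\lVert u_n\rVert_0^2$. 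Your inequality $\log(1+|x|)\leqslant\log(1+r)+\log(1+|x-y|)$, valid for \emph{all} $x\in\R^2$ and $y\in B_r(0)$, bounds $\lVert u_n\rVert_0^2$ from above directly, dispenses with the domain splitting entirely, and yields the slightly sharper constants $\frac{1}{\sigma}B_1(\rho_n,u_n^2)+\log(1+r)\lVert u_n\rVert_2^2$ (versus $\frac{2}{\sigma}B_1+\log(1+2r)\lVert u_n\rVert_2^2$ in the paper). One small point you handled correctly but should keep explicit: your double integral carries $\rho_n$ in the $y$-variable while the paper's $B_1(\rho_n,u_n^2)$ carries it in the $x$-variable, so the identification genuinely uses the symmetry of the kernel, as you noted.
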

		\begin{proof}
			Note that $1+|x-y|\geqslant1+\frac{|y|}{2}\geqslant\sqrt{1+|y|}$		for every $x\in B_r(0)$ and $y\in\R^2\setminus B_{2r}(0)$. Then we have
			\begin{align*}
				B_1\left(\rho_n,u_{n}^2\right)&\geqslant\int_{\R^2\setminus B_{2r}(0)}\int_{B_r(0)}\log(1+|x-y|)\rho_{n}(x)u_{n}^{2}(y)dxdy\\
				&\geqslant\frac{\sigma}{2}\int_{\R^2\setminus B_{2r}(0)}\log(1+|y|)u_{n}^2(y)dy\\
				&\geqslant\frac{\sigma}{2}\left(\lVert u_n\rVert_{0}^2-\int_{B_{2r}(0)}\log(1+|y|)u_{n}^2(y)dy\right) \\
				&\geqslant\frac{\sigma}{2}\left(\lVert u_{n}\rVert_{0}^2-\log(1+2r)\lVert u_n\rVert_{2}^2\right).
			\end{align*}
			Since $B_1\left(\rho_{n},u_{n}^2\right)$ and $\lVert u_n\rVert_2$ are both bounded,  then we obtain that $\lVert u_n\rVert_{0}$ is bounded. If, moreover, $	B_1(\rho_n,u_{n}^2)$ and  $\lVert u_n\rVert_2$ are both tend to zero, $\lVert u_n\rVert_{0}$ also tends to zero.
		\end{proof}
		Next, we illustrate that boundedness of logarithmic convolution implies the concentration of measure. We remark that the following lemma can be extended to any case of $\R^N$, where $N\geqslant2$. Similar result can be found in \cite[Lemma 2.5]{CJ19}.
		\begin{lemma}\label{varlemma of CJ19}
			Let $\left\{\rho_{n}\right\}$, $\left\{\tau_n\right\}$ be two sequences of positive functions such that $\int_{\R^2}\rho_{n}dx=a>0$ and $\int_{\R^2}\tau_ndx=b>0$ for all $n\in\mathbb{N}$. Suppose that $\{B_1(\rho_{n},\tau_n)\}$ is bounded, then  for any $\varepsilon>0$, there exists $\xi(\varepsilon)>0$ such that 
			\begin{equation*}
				\sup_{x\in\R^2}\int_{B_r(x)}\rho_{n}(y)dy\geqslant a-\varepsilon,
			\end{equation*}
			and
			\begin{equation*}
				\sup_{x\in\R^2}\int_{B_r(x)}\tau_{n}(y)dy\geqslant b-\varepsilon,
			\end{equation*}
			for any $r>\xi$ and $n\in\mathbb{N}$.
		\end{lemma}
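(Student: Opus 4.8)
Here is how I would prove Lemma~\ref{varlemma of CJ19}.

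The plan is to show that boundedness of $B_1(\rho_n,\tau_n)$ forces almost all the mass of each sequence to concentrate inside a single ball whose radius is controlled uniformly in $n$. Write $M:=\sup_n B_1(\rho_n,\tau_n)<\infty$. Since $\log(1+|x-y|)\geq 0$ and $\log(1+|x-y|)\geq\log(1+R)$ whenever $|x-y|\geq R$, my first observation is that the ``far mass'' is uniformly small:
\[
\iint_{|x-y|\geq R}\rho_n(x)\tau_n(y)\,dx\,dy\leq\frac{1}{\log(1+R)}\,B_1(\rho_n,\tau_n)\leq\frac{M}{\log(1+R)},
\]
and hence the complementary ``near mass'' obeys $\iint_{|x-y|<R}\rho_n(x)\tau_n(y)\,dx\,dy\geq ab-\tfrac{M}{\log(1+R)}$, with the defect tending to $0$ as $R\to\infty$, independently of $n$.

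Next I would rewrite the near mass as $\int_{\R^2}\rho_n(x)\,g_n(x)\,dx$ with $g_n(x):=\int_{B_R(x)}\tau_n(y)\,dy\in[0,b]$, and then localize where $\rho_n$ lives. Fixing $\eta:=\tfrac14\min\{a,b\}$ and setting $A_n:=\{x:g_n(x)\geq b-\eta\}$, a Chebyshev-type splitting of $\int\rho_n g_n$ over $A_n$ and its complement, using $g_n\leq b$ on $A_n$ and $g_n<b-\eta$ off $A_n$ together with $\int\rho_n=a$, yields $\int_{A_n}\rho_n\,dx\geq a-\tfrac{M}{\eta\log(1+R)}$. Thus a definite proportion of the mass of $\rho_n$ sits on the set $A_n$ where $\tau_n$ already carries most of its mass within distance $R$.

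The decisive step, and the one I expect to be the main obstacle, is to convert this into a genuine \emph{diameter} bound on $A_n$. Because $\eta<b/2$, for any two points $x,x'\in A_n$ the balls $B_R(x)$ and $B_R(x')$ each carry more than half of the total mass $b$ of $\tau_n$, so they cannot be disjoint; hence $|x-x'|<2R$, i.e.\ $\operatorname{diam}(A_n)\leq 2R$ and $A_n\subseteq B_{2R}(x_0)$ for any $x_0\in A_n$. Consequently
\[
\sup_{x\in\R^2}\int_{B_{2R}(x)}\rho_n(y)\,dy\geq\int_{A_n}\rho_n\,dx\geq a-\frac{M}{\eta\log(1+R)}.
\]
Given $\varepsilon>0$, I choose $R$ so large that $\tfrac{M}{\eta\log(1+R)}\leq\varepsilon$ and set $\xi:=2R$; the monotonicity of $r\mapsto\int_{B_r(x)}\rho_n$ then delivers the claimed lower bound for every $r>\xi$ and every $n$. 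The estimate for $\tau_n$ follows verbatim after interchanging the roles of $\rho_n$ and $\tau_n$, and since $\eta$ was chosen symmetrically, the same $\xi$ serves both bounds.
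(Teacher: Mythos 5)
Your proof is correct, but it takes a genuinely different route from the paper's. Both arguments open identically: since $\log(1+|x-y|)\geqslant\log(1+R)$ on $\{|x-y|\geqslant R\}$, the bound $B_1(\rho_n,\tau_n)\leqslant M$ forces
\[
\iint_{|x-y|<R}\rho_n(x)\tau_n(y)\,dx\,dy\;\geqslant\; ab-\frac{M}{\log(1+R)}.
\]
At that point the paper finishes in one line: by Fubini,
\[
\iint_{|x-y|<R}\rho_n(x)\tau_n(y)\,dx\,dy=\int_{\R^2}\Big(\int_{B_R(y)}\rho_n(x)\,dx\Big)\tau_n(y)\,dy\;\leqslant\; b\,\sup_{y\in\R^2}\int_{B_R(y)}\rho_n\,dx,
\]
so $\sup_{y}\int_{B_R(y)}\rho_n\geqslant a-\frac{M}{b\log(1+R)}$ directly, with no auxiliary set and no doubling of the radius. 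You instead run a Chebyshev/level-set argument on $g_n(x)=\int_{B_R(x)}\tau_n$, get $\int_{A_n}\rho_n\geqslant a-\frac{M}{\eta\log(1+R)}$, and then use the ball-intersection observation (since $b-\eta>b/2$, any two points of $A_n$ lie within $2R$ of each other) to trap $A_n$ in a single ball of radius $2R$. This is valid — the only unstated point is that $A_n\neq\varnothing$, which your own inequality guarantees once $R$ is large and $\varepsilon<a$ (the case $\varepsilon\geqslant a$ being trivial) — but it costs you the doubled radius $\xi=2R$ and the worse constant $\eta=\tfrac14\min\{a,b\}$ in place of $b$. What your route buys is strictly more geometric information: it produces a \emph{common} concentration point $x_0\in A_n$, with $\int_{B_{2R}(x_0)}\rho_n\geqslant a-\varepsilon$ and $\int_{B_R(x_0)}\tau_n\geqslant b-\eta$ simultaneously, whereas the lemma as stated (and the paper's proof) only gives two separate, possibly unrelated suprema. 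That stronger conclusion is not needed here — in the paper's only application, Lemma \ref{weak convergent sequence}, one takes $\rho_n=\tau_n$ — but it is a real refinement when the two densities differ.
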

		\begin{proof}
			Due to the boundedness of $\{B_1(\rho_{n},\tau_n)\}$, we may assume that $0\leqslant B_1\left(\rho_{n},\tau_{n}\right)\leqslant M$ for some positive $M$. Then we can  estimate that
			\begin{equation*}
				\begin{aligned}
					M\geqslant B_1\left(\rho_{n},\tau_{n}\right)&=\iint_{\R^2\times\R^2}\log(1+|x-y|)\rho_{n}(x)\tau_{n}(y)dxdy\\
					&\geqslant\iint_{|x-y|\geqslant r}\log(1+|x-y|)\rho_{n}(x)\tau_{n}(y)dxdy\\
					&\geqslant\log(1+r)\left(ab-\iint_{|x-y|< r}\rho_{n}(x)\tau_{n}(y)dxdy\right)\\
					&=\log(1+r)\left(ab-\int_{\R^2}\left(\int_{B_r(y)}\rho_{n}(x)dx\right)\tau_{n}(y)dy\right)\\
					&\geqslant\log(1+r)b\left(a-\sup_{y\in\R^2}\int_{B_r(y)}\rho_{n}(x)dx\right),
				\end{aligned}
			\end{equation*}
			for every $n\in\mathbb{N}$.  Let $r$ be large enough, then we obtain the conclusion for $\rho_{n}$. Since $B_1(\cdot,\cdot)$ is symmetry bilinear form, we also have the same conclusion for $\tau_{n}$.
		\end{proof}
		
		We show the existence of a weakly convergent sequence.

		\begin{lemma}\label{weak convergent sequence}
			Let $\left\{(u_n,v_n)\right\}\subset S(c_1)\times S(c_2)$ be a bounded sequence in $H^1(\R^2)\times H^1(\R^2)$ such that $\left\{W_1(u_n,v_n)\right\}$ is bounded. Then there exist some $(u,v)\in E\times E$ and a sequence $\left\{x_n\right\}\subset\R^2$  such that  $\left(\tilde{u}_n,\tilde{v}_n\right):=\left(u_{n}(\cdot-x_n),v_{n}(\cdot-x_n)\right) \rightharpoonup(u,v)$ in $S(c_1)\times S(c_2)$, up to a subsequence.
		\end{lemma}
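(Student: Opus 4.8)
The plan is to manufacture the single translation sequence from the \emph{combined} mass density $u_n^2+v_n^2$, upgrade the $H^1$-bound of the shifted pair to an $E$-bound, and then read off both the weak limit and the conservation of mass from the compact embedding $E\hookrightarrow\hookrightarrow L^2(\R^2)$ in Lemma~\ref{lemmaone}(i). I would begin by exploiting the nonnegativity of the kernel $\log(1+|x-y|)\geq0$: by the decomposition \eqref{decomposition}, $W_1(u_n,v_n)=V_1(u_n)+V_1(v_n)+2B_1(u_n^2,v_n^2)$ is a sum of nonnegative terms, so the boundedness of $\{W_1(u_n,v_n)\}$ forces each of $\{V_1(u_n)\}$, $\{V_1(v_n)\}$ and $\{B_1(u_n^2,v_n^2)\}$ to be bounded. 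Setting $\rho_n:=u_n^2+v_n^2$, one has $\int_{\R^2}\rho_n\,dx=c_1+c_2$ and $B_1(\rho_n,\rho_n)=W_1(u_n,v_n)$ bounded.

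Next I would apply the concentration Lemma~\ref{varlemma of CJ19} with $\rho_n=\tau_n=u_n^2+v_n^2$ and $a=b=c_1+c_2$. Fixing $\varepsilon>0$ with $2\varepsilon<\min\{c_1,c_2\}$, the lemma yields $r>0$ and, for each $n$, a point $x_n$ with $\int_{B_r(x_n)}(u_n^2+v_n^2)\,dx\geq c_1+c_2-2\varepsilon$. The decisive point---explaining why one translation serves both components---is that, since $\int_{\R^2}v_n^2=c_2$, one obtains $\int_{B_r(x_n)}u_n^2\geq c_1-2\varepsilon>0$, and symmetrically $\int_{B_r(x_n)}v_n^2\geq c_2-2\varepsilon>0$: both densities are trapped in the \emph{same} ball. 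Writing $\tilde u_n:=u_n(\cdot-x_n)$ and $\tilde v_n:=v_n(\cdot-x_n)$, translation invariance of $B_1$ (and hence of $V_1$, $W_1$) preserves all the above bounds, while now $\int_{B_r(0)}\tilde u_n^2\geq c_1-2\varepsilon$ and $\int_{B_r(0)}\tilde v_n^2\geq c_2-2\varepsilon$.

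I would then upgrade to an $E$-bound through Lemma~\ref{varlemma of CW16}. Taking $\rho_n=\tilde u_n^2$, one has $\int_{B_r(0)}\rho_n\geq c_1-2\varepsilon=:\sigma>0$ and $B_1(\rho_n,\tilde u_n^2)=V_1(\tilde u_n)=V_1(u_n)$ bounded, with $\{\tilde u_n\}$ bounded in $L^2$; the lemma gives that $\{\lVert\tilde u_n\rVert_0\}$ is bounded, and the same argument with $\tilde v_n^2$ bounds $\{\lVert\tilde v_n\rVert_0\}$. Combined with the $H^1$-bound this makes $\{(\tilde u_n,\tilde v_n)\}$ bounded in $E\times E$, so up to a subsequence $(\tilde u_n,\tilde v_n)\rightharpoonup(u,v)$ in $E\times E$ with $(u,v)\in E\times E$. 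The compact embedding $E\hookrightarrow\hookrightarrow L^2(\R^2)$ of Lemma~\ref{lemmaone}(i) then gives $\tilde u_n\to u$ and $\tilde v_n\to v$ strongly in $L^2(\R^2)$, so $\lVert u\rVert_2^2=c_1$ and $\lVert v\rVert_2^2=c_2$; thus $(u,v)\in S(c_1)\times S(c_2)$ and no mass is lost, which is precisely the assertion $(\tilde u_n,\tilde v_n)\rightharpoonup(u,v)$ in $S(c_1)\times S(c_2)$.

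The step I expect to be the main obstacle is the construction of a common translation in the second paragraph. The hypotheses only control the coupled quantity $W_1$, and a priori $u_n$ and $v_n$ could escape to infinity along different directions, defeating any single shift. The resolution is to localize the \emph{sum} $u_n^2+v_n^2$ via Lemma~\ref{varlemma of CJ19} and then subtract the fixed individual masses $c_1$ and $c_2$, which pins each component into one common ball $B_r(x_n)$; the remaining steps are a direct combination of Lemmas~\ref{varlemma of CW16} and~\ref{lemmaone}(i).
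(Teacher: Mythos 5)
Your proof is correct and follows essentially the same route as the paper: concentration of the combined density $u_n^2+v_n^2$ via Lemma~\ref{varlemma of CJ19}, a single common translation $x_n$, the $\lVert\cdot\rVert_0$-bound from Lemma~\ref{varlemma of CW16}, and then weak convergence plus the compact embedding of Lemma~\ref{lemmaone}(i) to keep the limit on $S(c_1)\times S(c_2)$. The only (harmless) difference is that you feed the individual densities $\tilde u_n^2$, $\tilde v_n^2$ into Lemma~\ref{varlemma of CW16}, which is why you need the extra subtraction step pinning each component's mass into the common ball, whereas the paper feeds in the combined density and splits $W_1(\tilde u_n,\tilde v_n)=B_1(\tilde u_n^2+\tilde v_n^2,\tilde u_n^2)+B_1(\tilde u_n^2+\tilde v_n^2,\tilde v_n^2)$, so that only the sum needs to concentrate.
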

		\begin{proof}
			We take $\rho_{n}=\tau_{n}=u_{n}^2+v_{n}^2$ for any $n\in\mathbb{N}$. Note that
			\begin{equation*}
				B_1(\rho_{n},\tau_{n})=W_1(u_n,v_n)\quad\mathrm{and}\quad \int_{\R^2}\left(u_n^2+v_n^2\right)dx=c_1+c_2.
			\end{equation*}
			Applying Lemma \ref{varlemma of CJ19},  there exist a sequence $\left\{x_n\right\}\subset\R^2$ and some $r>0$ such that 
			\begin{equation*}
				\int_{B_r(x_n)}\left[u_{n}^2(y)+v_{n}^2(y)\right]dy\geqslant\frac{c_1+c_2}{2}>0,\quad\text{ for every }n\in\mathbb{N}.
			\end{equation*}
			Let $(\tilde{u}_n,\tilde{v}_n)=(u_{n}(\cdot-x_n),v_{n}(\cdot-x_n))$. One can easily verify that $W_1(u_n,v_n)=W_1(\tilde{u}_n,\tilde{v}_n)=B_1(\tilde{u}_{n}^2+\tilde{v}_{n}^2,\tilde{u}_{n}^2)+B_1(\tilde{u}_{n}^2+\tilde{v}_{n}^2,\tilde{v}_{n}^2)$. The boundedness of $W_1(u_n,v_n)$ implies that $B_1(\tilde{u}_{n}^2+\tilde{v}_{n}^2,\tilde{u}_{n}^2)$ and $B_1(\tilde{u}_{n}^2+\tilde{v}_{n}^2,\tilde{v}_{n}^2)$ are both bounded. By  Lemma \ref{varlemma of CW16}, we obtain that $\lVert \tilde{u}_{n}\rVert_{0}$ and $\lVert \tilde{v}_{n}\rVert_{0}$ are both bounded. Hence, $\left(\tilde{u}_n,\tilde{v}_n\right)$ is a bounded sequence in $E\times E$. Since $E\times E$ is a Hilbert space, we conclude that there exists $(u,v)\in E\times E$ such that, up to a subsequence, $\left(\tilde{u}_n,\tilde{v}_n\right)\rightharpoonup(u,v)$ in $E\times E$. Finally, due to the compact embedding (Lemma \ref{lemmaone}(i)) and the invariance by translation of the $L^2$-norm, we have $\left(\tilde{u}_n,\tilde{v}_n\right)\rightharpoonup(u,v)\in S(c_1)\times S(c_2)$.
		\end{proof}
		
		The following lemma is useful.
		
		\begin{lemma}\label{lemmafour}
			Assume that $(u_n,v_n)\rightharpoonup (u,v)$ in $E\times E$. Then, up to a subsequence, 
			\begin{equation*}
				\int_{\R^2}\left|u_{n}v_{n}\right|^pdx\rightarrow\int_{\R^2}\left|uv\right|^pdx\quad\text{as }n\rightarrow\infty,\quad\text{for any }p\in(1,\infty).
			\end{equation*}
		\end{lemma}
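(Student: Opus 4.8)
The plan is to exploit the compact embedding of Lemma~\ref{lemmaone}(i) to upgrade the weak convergence in $E\times E$ into strong convergence in a Lebesgue space whose exponent is large enough that a single application of H\"older's inequality closes the estimate. Since a compact linear embedding sends weakly convergent sequences to strongly convergent ones, the hypothesis $(u_n,v_n)\rightharpoonup(u,v)$ in $E\times E$ forces (along a subsequence, as asserted) $u_n\to u$ and $v_n\to v$ strongly in $L^{2p}(\R^2)$. This is legitimate because $p\in(1,\infty)$ gives $2p\in(2,\infty)$, which lies in the admissible range $[2,\infty)$ of Lemma~\ref{lemmaone}(i). In particular $\{\lVert u_n\rVert_{2p}\}$ is bounded, while $\lVert u_n-u\rVert_{2p}\to0$ and $\lVert v_n-v\rVert_{2p}\to0$.

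The core step is to prove that $u_nv_n\to uv$ strongly in $L^p(\R^2)$, from which the conclusion follows immediately by continuity of the norm, since $\int_{\R^2}\lvert u_nv_n\rvert^p\,dx=\lVert u_nv_n\rVert_p^p$ and $\int_{\R^2}\lvert uv\rvert^p\,dx=\lVert uv\rVert_p^p$. To obtain this $L^p$-convergence I would write
\[
u_nv_n-uv=u_n(v_n-v)+(u_n-u)v,
\]
and estimate each summand by H\"older's inequality with the conjugate pair $(2,2)$:
\[
\lVert u_n(v_n-v)\rVert_p\leqslant\lVert u_n\rVert_{2p}\,\lVert v_n-v\rVert_{2p},\qquad \lVert (u_n-u)v\rVert_p\leqslant\lVert u_n-u\rVert_{2p}\,\lVert v\rVert_{2p}.
\]
Both right-hand sides tend to $0$ by the first step (boundedness of $\lVert u_n\rVert_{2p}$ together with the strong $L^{2p}$-convergence of $u_n$ and $v_n$), whence $\lVert u_nv_n-uv\rVert_p\to0$.

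Since the only mechanism at work is the compactness of the embedding followed by a routine H\"older splitting, there is no serious analytic obstacle; the one point requiring care is the bookkeeping of exponents, namely choosing the space $L^{2p}$ (rather than a borderline space) so that the products $\lvert u_n\rvert^p\lvert v_n-v\rvert^p$ and $\lvert u_n-u\rvert^p\lvert v\rvert^p$ are integrable via the pairing $\tfrac12+\tfrac12=1$, and so that $2p$ stays inside the range where Lemma~\ref{lemmaone}(i) applies. No sign conditions on the functions and no restrictions on $\mu_1,\mu_2,\beta$ are needed, and the argument holds uniformly for every finite $p>1$.
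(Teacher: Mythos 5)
Your proof is correct, and it reaches the conclusion by a genuinely shorter route than the paper. The paper also starts from the compact embedding of Lemma~\ref{lemmaone}(i), but it only records the convergence of the norms, $\int_{\R^2}|u_n|^{2p}dx\to\int_{\R^2}|u|^{2p}dx$ and likewise for $v_n$; it then extracts an a.e.\ convergent subsequence and invokes the Brezis--Lieb lemma to upgrade this to $\bigl\lVert |u_n|^p-|u|^p\bigr\rVert_2\to0$ and $\bigl\lVert |v_n|^p-|v|^p\bigr\rVert_2\to0$, finishing with the splitting $|u_nv_n|^p-|uv|^p=|v_n|^p\left(|u_n|^p-|u|^p\right)+|u|^p\left(|v_n|^p-|v|^p\right)$ and H\"older. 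You instead use the full strength of compactness — a compact embedding is completely continuous, so weak convergence in $E$ already yields strong convergence in $L^{2p}(\R^2)$ (indeed for the whole sequence, not merely a subsequence) — and then split the product itself, $u_nv_n-uv=u_n(v_n-v)+(u_n-u)v$, rather than its $p$-th power. This eliminates the Brezis--Lieb step and the a.e.\ convergence bookkeeping entirely, and it delivers the slightly stronger conclusion $u_nv_n\to uv$ in $L^p(\R^2)$, from which the convergence of the integrals is immediate. What the paper's longer route buys is robustness: the Brezis--Lieb mechanism needs only norm convergence plus a.e.\ convergence, so it survives in settings where one cannot convert weak convergence into strong convergence; in the present compact setting, however, that detour is unnecessary and your argument is the more economical one.
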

		\begin{proof}
			Obviously, $u_n\rightharpoonup u$ and $v_n\rightharpoonup v$ in $E$, respectively. Since $E$ is compactly embedded in $L^s(\R^2)$ for $s\in\left[2,\infty\right)$ \cite[Lemma2.2(i)]{CW16}, we have 
			\begin{equation}\label{fomula1}
				\lim_{n\rightarrow\infty}\int_{\R^2}\left|u_n\right|^{2p}dx=\int_{\R^2}\left|u\right|^{2p}dx,\quad\lim_{n\rightarrow\infty}\int_{\R^2}\left|v_n\right|^{2p}dx=\int_{\R^2}\left|v\right|^{2p}dx.
			\end{equation}
			Note that $\{\left|u_n\right|^p\}$ and $\{\left|v_n\right|^p\}$ are both bounded in $L^{2}(\R^2)$. From (\ref{fomula1}), we may assume that, without loss of generality,
			\begin{equation*}
				\left|u_{n}(x)\right|^p\rightarrow\left|u(x)\right|^p\quad \mathrm{and}\quad\left|v_{n}(x)\right|^p\rightarrow\left|v(x)\right|^p\ \text{a.e. in }\R^2.
			\end{equation*}
			Using Brezis-Lieb lemma \cite{BL83}, we have 
			\begin{align*}
				\lim_{n\rightarrow\infty}\int_{\R^2}\left[\left|u_{n}\right|^{2p}-\left|\left|u_{n}\right|^p-\left|u\right|^p\right|^2-\left|u\right|^{2p}\right]dx=0,\\
				\lim_{n\rightarrow\infty}\int_{\R^2}\left[\left|v_{n}\right|^{2p}-\left|\left|v_{n}\right|^p-\left|v\right|^p\right|^2-\left|v\right|^{2p}\right]dx=0.
			\end{align*}
			Therefore, we deduce from (\ref{fomula1}) that
			\begin{align*}
				\lim_{n\rightarrow\infty}\int_{\R^2}\left|\left|u_{n}\right|^p-\left|u\right|^p\right|^2dx=0,\\
				\lim_{n\rightarrow\infty}\int_{\R^2}\left|\left|v_{n}\right|^p-\left|v\right|^p\right|^2dx=0.
			\end{align*}
			Finally, by H\"{o}lder's inequality,  we obtain
			\begin{align*}
				\int_{\R^2}\left[\left|u_{n}v_{n}\right|^p-\left|uv\right|^p\right]dx&=\int_{\R^2}\left[\left|v_{n}\right|^p\left(\left|u_{n}\right|^p-\left|u\right|^p\right)+\left|u\right|^p\left(\left|v_{n_k}\right|^p-\left|v\right|^p\right)\right]dx\\
				&\leqslant\lVert v_{n}\rVert_{2p}^{p}\left(\int_{\R^2}\left(\left|u_{n}\right|^p-\left|u\right|^p\right)^2dx\right)^{\frac{1}{2}}+\lVert u\rVert_{2p}^{p}\left(\int_{\R^2}\left(\left|v_{n}\right|^p-\left|v\right|^p\right)^2dx\right)^{\frac{1}{2}}\rightarrow 0.
			\end{align*}
		\end{proof}

		\subsection{Proof of Theorem \ref{thm1}}
		Now we show the proof of Theorem \ref{thm1}. We will demonstrate that the functional $I(u,v)$ is bounded from below on the constraint $S(c_1)\times S(c_2)$ and its minimizer can be attained.
		\begin{lemma}\label{lemmathree}
			For  $(u,v)\in S(c_1)\times S(c_2)$,  we have the following estimates:
			\begin{enumerate}[{\rm(i)}]
				\item $W_2(u,v)\leqslant C\left(c_{1}^{\frac{3}{4}}Q(u)^{\frac{1}{4}}+c_{2}^{\frac{3}{4}}Q(v)^{\frac{1}{4}}\right)^2$;
				\item $P(u)\leqslant K_{2p}Q(u)^{p-1}c_1$ and $P(v)\leqslant K_{2p}Q(v)^{p-1}c_2$;
				\item $P_0(u,v)\leqslant \frac{K_{2p}}{2}\left(Q(u)^{p-1}c_1+Q(v)^{p-1}c_2\right)$,
			\end{enumerate}
			where $C>0$ is a constant independent of $(u,v)$ and $K_{2p}$ is the best constant of Gagliardo-Nirenberg inequality \eqref{GNieq}.
		\end{lemma}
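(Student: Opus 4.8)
The plan is to derive all three bounds directly from the Gagliardo--Nirenberg inequality \eqref{GNieq}, together with the already-established estimate \eqref{W2estimate} on $W_2$ and an elementary application of Young's inequality. The essential point is that on $S(c_1)\times S(c_2)$ the $L^2$-norms are frozen, namely $\|u\|_2^2=c_1$ and $\|v\|_2^2=c_2$, so every interpolation inequality turns a power of $\|\nabla u\|_2$ into a power of $Q(u)$ multiplied by the appropriate mass.

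First I would prove (ii), which is the most direct. Applying \eqref{GNieq} with $q=2p$ gives $P(u)=\|u\|_{2p}^{2p}\leqslant K_{2p}\|\nabla u\|_2^{2p-2}\|u\|_2^2$. Since $(u,v)\in S(c_1)\times S(c_2)$ we have $\|u\|_2^2=c_1$ and $\|\nabla u\|_2^{2p-2}=\big(\|\nabla u\|_2^2\big)^{p-1}=Q(u)^{p-1}$, whence $P(u)\leqslant K_{2p}Q(u)^{p-1}c_1$; the bound on $P(v)$ is identical with $c_2$ in place of $c_1$. For (iii), I would apply Young's inequality pointwise, $|uv|^p=|u|^p|v|^p\leqslant\tfrac{1}{2}\big(|u|^{2p}+|v|^{2p}\big)$, and integrate to obtain $P_0(u,v)\leqslant\tfrac{1}{2}\big(P(u)+P(v)\big)$. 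Substituting the two bounds just derived in (ii) then yields $P_0(u,v)\leqslant\tfrac{K_{2p}}{2}\big(Q(u)^{p-1}c_1+Q(v)^{p-1}c_2\big)$, exactly as claimed.

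The estimate (i) requires slightly more care and is the only place where the exponent bookkeeping is nontrivial. Starting from \eqref{W2estimate}, namely $W_2(u,v)\lesssim\big(\|u\|_{8/3}^2+\|v\|_{8/3}^2\big)^2$, I would control each $L^{8/3}$-norm via \eqref{GNieq} with $q=\tfrac{8}{3}$: since $q-2=\tfrac{2}{3}$, this gives $\|u\|_{8/3}^{8/3}\leqslant K_{8/3}\|\nabla u\|_2^{2/3}\|u\|_2^2=K_{8/3}Q(u)^{1/3}c_1$. Raising this to the power $\tfrac{3}{4}$ (so that the left side becomes $\|u\|_{8/3}^2$ and $Q(u)^{1/3}$ becomes $Q(u)^{1/4}$) produces $\|u\|_{8/3}^2\leqslant K_{8/3}^{3/4}Q(u)^{1/4}c_1^{3/4}$, and likewise $\|v\|_{8/3}^2\leqslant K_{8/3}^{3/4}Q(v)^{1/4}c_2^{3/4}$. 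Inserting these into \eqref{W2estimate} and absorbing $K_{8/3}^{3/4}$ together with the implicit HLS constant from \eqref{W2estimate} into a single $C>0$ gives $W_2(u,v)\leqslant C\big(c_1^{3/4}Q(u)^{1/4}+c_2^{3/4}Q(v)^{1/4}\big)^2$, with $C$ independent of $(u,v)$ as required.

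In summary, no genuine obstacle arises: (ii) and (iii) are one-line consequences of \eqref{GNieq} and Young's inequality, while the only mild subtlety is matching the power $\tfrac14$ of $Q(u)$ in (i), which comes from the interpolation exponent $q=\tfrac{8}{3}$ applied to the quadratic control of $W_2$ supplied by \eqref{W2estimate}. Throughout, the constraint $S(c_1)\times S(c_2)$ is what allows each norm $\|u\|_2^2,\|v\|_2^2$ to be replaced by the constants $c_1,c_2$.
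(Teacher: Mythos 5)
Your proposal is correct and follows essentially the same route as the paper: (i) from \eqref{W2estimate} combined with the Gagliardo--Nirenberg inequality at $q=\tfrac{8}{3}$, (ii) directly from \eqref{GNieq} at $q=2p$ using $\|u\|_2^2=c_1$, $\|v\|_2^2=c_2$, and (iii) from the elementary pointwise bound $|uv|^p\leqslant\tfrac12\left(|u|^{2p}+|v|^{2p}\right)$ followed by (ii). The only cosmetic difference is that the paper quotes H\"older's inequality where you invoke Young's inequality, and your write-up makes explicit the exponent bookkeeping that the paper leaves to the reader.
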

		\begin{proof}
			Using \eqref{W2estimate} and Gagliardo-Nirenberg inequality (\ref{GNieq}), we obtain (i). (ii) is a direct conclusion of Gagliardo-Nirenberg inequality (\ref{GNieq}). For (iii), by Hölder's inequality, we have $$P_0(u,v)\leqslant\frac{1}{2}\left(P(u)+P(v)\right).$$
			Using Gagliardo-Nirenberg inequality \eqref{GNieq} again, (iii) can be given.
		\end{proof}
		
		\begin{lemma}\label{lemma3.1}
			Under the assumptions of Theorem \ref{thm1}, the functional $I$ is bounded from below on $S\left(c_1\right)\times S\left(c_2\right)$, i.e., $m=\inf_{S\left(c_1\right)\times S\left(c_2\right)}I\left(u,v\right)>-\infty$.
		\end{lemma}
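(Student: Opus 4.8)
The plan is to discard the manifestly nonnegative logarithmic term and to control everything else by the kinetic energy $Q(u)+Q(v)$ using the estimates collected in Lemma \ref{lemmathree}. Since $\log(1+|x-y|)\geqslant 0$, we have $W_1(u,v)\geqslant 0$, so that $W_0(u,v)=W_1(u,v)-W_2(u,v)\geqslant -W_2(u,v)$ and hence
\begin{equation*}
I(u,v)\geqslant \frac12\bigl(Q(u)+Q(v)\bigr)-\frac14 W_2(u,v)-\frac{1}{2p}R(u,v).
\end{equation*}
The first term is linear in $Q$, while Lemma \ref{lemmathree}(i) combined with $(a+b)^2\leqslant 2a^2+2b^2$ gives $W_2(u,v)\lesssim Q(u)^{1/2}+Q(v)^{1/2}$, which grows only like $Q^{1/2}$. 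This \emph{sublinear} contribution will always be absorbed by the kinetic energy through Young's inequality, so the entire difficulty is concentrated in the sign and growth of the term $R(u,v)$.

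First I would estimate $R(u,v)$ from above. Using $\beta>0$ together with the elementary bound $P_0(u,v)\leqslant \frac12\bigl(P(u)+P(v)\bigr)$ from the proof of Lemma \ref{lemmathree}(iii), one obtains
\begin{equation*}
R(u,v)\leqslant (\mu_1+\beta)P(u)+(\mu_2+\beta)P(v).
\end{equation*}
Keeping only the terms with positive coefficient and applying the Gagliardo--Nirenberg bounds $P(u)\leqslant K_{2p}Q(u)^{p-1}c_1$ and $P(v)\leqslant K_{2p}Q(v)^{p-1}c_2$ from Lemma \ref{lemmathree}(ii), this yields
\begin{equation*}
R(u,v)\leqslant K_{2p}\bigl[(\mu_1+\beta)_+ c_1\, Q(u)^{p-1}+(\mu_2+\beta)_+ c_2\, Q(v)^{p-1}\bigr],
\end{equation*}
where $(\cdot)_+$ denotes the positive part. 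Thus the growth of the $R$-contribution is governed entirely by the exponent $p-1$.

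I would then split into the three cases of Theorem \ref{thm1}. In case (i) both $\mu_i+\beta\leqslant 0$, so $R(u,v)\leqslant 0$, whence $-\frac{1}{2p}R(u,v)\geqslant 0$ and the lower bound reduces to $I(u,v)\geqslant \frac12(Q(u)+Q(v))-\frac14 W_2(u,v)$; here the sublinear $W_2$ term is dominated by the kinetic energy and $I$ is bounded below. In case (ii), the condition $1<p<2$ forces $0<p-1<1$, so $Q^{p-1}$ is again strictly sublinear; both the $W_2$ and the $R$ contributions are $o(Q)$ as $Q\to\infty$, and minimizing the resulting function of the two variables $(Q(u),Q(v))$ over $[0,\infty)^2$ produces a finite lower bound $m>-\infty$.

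The main obstacle is the critical case (iii), $p=2$, where $p-1=1$ and $R$ becomes \emph{linear} in $Q$, so it can no longer be absorbed for free. Collecting the linear coefficients (recall $K_{2p}=K_4$ and $\frac{1}{2p}=\frac14$), the bound takes the form
\begin{equation*}
I(u,v)\geqslant \frac14\bigl[2-K_4(\mu_1+\beta)_+ c_1\bigr]Q(u)+\frac14\bigl[2-K_4(\mu_2+\beta)_+ c_2\bigr]Q(v)-\frac14 W_2(u,v),
\end{equation*}
and the hypothesis $\min\{2-K_4(\mu_1+\beta)c_1,\,2-K_4(\mu_2+\beta)c_2\}>0$ is precisely what makes both coefficients strictly positive (when $\mu_i+\beta\leqslant 0$ the positive part vanishes and the coefficient equals $\tfrac12$). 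Once this positivity is secured, the remaining $W_2$ term is $O(Q^{1/2})$ and is absorbed by Young's inequality, giving the uniform lower bound $m=\inf_{S(c_1)\times S(c_2)}I(u,v)>-\infty$ in all three cases.
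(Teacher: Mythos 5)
Your proposal is correct and follows essentially the same route as the paper: drop $W_1\geqslant 0$ so that $W_0\geqslant -W_2$, bound $R(u,v)\leqslant(\mu_1+\beta)P(u)+(\mu_2+\beta)P(v)$ via H\"older and $\beta>0$, apply Lemma \ref{lemmathree} to reduce everything to growth in $Q(u),Q(v)$, and treat the three cases by comparing the exponent $p-1$ against $1$. The only cosmetic difference is that you use positive parts $(\mu_i+\beta)_+$ to handle mixed signs uniformly, where the paper instead argues ``without loss of generality'' by reducing to case (i).
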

		\begin{proof}
			For the case (i), note first that $\beta>0$. Then, from Hölder's inequality, we get 
			\begin{equation*}
				R(u,v)=\mu_{1}P(u)+\mu_{2}P(v)+2\beta P_0(u,v)\leqslant(\mu_{1}+\beta)P(u)+(\mu_{2}+\beta)P(v).
			\end{equation*}
			Since $\max\left\{\mu_1+\beta,\mu_2+\beta\right\}\leqslant0$, Lemma \ref{lemmathree}(i) leads to
			\begin{equation*}
				I(u,v)\geqslant\frac{1}{2}\left( Q(u)+Q(v)\right)-C\left(c_{1}^{\frac{3}{4}}Q(u)^{\frac{1}{4}}+c_{2}^{\frac{3}{4}}Q(v)^{\frac{1}{4}}\right)^2>-\infty.
			\end{equation*} 
			
			For case (ii), without loss of generality, we may assume that both $\mu_{1}+\beta$ and $\mu_{2}+\beta$ are positive.  Otherwise we can deduce it from the proof of case (i). Since $p<2$, by means of Lemma \ref{lemmathree} we have
			\begin{align*}
				I(u,v)\geqslant&\frac{1}{2}\left( Q(u)+Q(v)\right)-C\left(c_{1}^{\frac{3}{4}}Q(u)^{\frac{1}{4}}+c_{2}^{\frac{3}{4}}Q(v)^{\frac{1}{4}}\right)^2\\
				&-\frac{K_{2p}}{2p}\left[Q(u)^{p-1}c_1\left(\mu_{1}+\beta\right)+Q(v)^{p-1}c_2\left(\mu_{2}+\beta\right)\right]>-\infty.
			\end{align*}

			For case (iii), likewise above procedure, we have
			\begin{equation*}
				I(u,v)\geqslant\left[\frac{1}{2}-\frac{K_4\left(\mu_{1}+\beta\right)}{4}c_1\right]Q(u)+\left[\frac{1}{2}-\frac{K_4\left(\mu_{2}+\beta\right)}{4}c_2\right]Q(v)-C\left(c_{1}^{\frac{3}{4}}Q(u)^{\frac{1}{4}}+c_{2}^{\frac{3}{4}}Q(v)^{\frac{1}{4}}\right)^2>-\infty.
			\end{equation*}
			
		\end{proof}
		\begin{proof}[Proof of Theorem \ref{thm1}]
			In view of Lemma \ref{lemma3.1}, there exists a minimizing sequence $\left\{\left(u_n,v_n\right)\right\}$ of $I$ constrained on $S(c_1)\times S(c_2)$ such that $I(u_n,v_n)\rightarrow m$. Depending on the  estimate of $I(u,v)$ in  Lemma \ref{lemma3.1}, we know that $\{Q(u_n)\}$ and $\{Q(v_n)\}$ are  bounded. Hence, the sequence  $\left\{\left(u_n,v_n\right)\right\}$ is bounded in $H^1(\R^2)\times H^1(\R^2)$. Moreover, it follows from Lemma \ref{lemmathree} that $\{W_2(u_n,v_n)\}, \{P(u_n)\}, \{P(v_n)\}$ and $\{P_0(u_n,v_n)\}$ are bounded. 
			Note that 
			\begin{equation*}
				\frac{1}{4}W_1(u_n,v_n)=I(u_n,v_n)-\frac{1}{2}\left( Q(u_n)+Q(v_n)\right)+\frac{1}{4}W_2(u_n,v_n)+\frac{1}{2p}[\mu_{1}P(u_n)+\mu_{2}P(v_n)+2\beta P_0(u_n,v_n)],
			\end{equation*}
			so  $\{W_1(u_n,v_n)\}$ is bounded. By Lemma \ref{weak convergent sequence},  there exist a sequence  $\left\{x_n\right\}\subset \R^2$ and  $(u,v)\in E\times E$ such that  $\left(\tilde{u}_n,\tilde{v}_n\right)=\left(u_{n}\left(\cdot-x_n\right),v_{n}\left(\cdot-x_n\right)\right)\rightharpoonup\left(u,v\right)$ in $S(c_1)\times S(c_2)$ as $n\to \infty$. Then we have $I(u_n,v_n)=I\left(\tilde{u}_n,\tilde{v}_n\right)$. From Lemma \ref{lemmaone}(iv) and Lemma \ref{lemmafour}, we know that, up to a sunsequence, 
			\begin{equation*}
				W_2(\tilde{u}_n,\tilde{v}_n)\rightarrow W_2(u,v)\quad\text{and}\quad P_0(\tilde{u}_n,\tilde{v}_n)\rightarrow P_0(u,v),\quad\text{as }n\to\infty.
			\end{equation*}
			In addition, we conclude from Lemma \ref{lemmaone}(i)
			that $P(\tilde{u}_n)\rightarrow P(u)$ and $P(\tilde{v}_n)\rightarrow P(v)$ as $n\to\infty$. 
			Due to the weak lower semicontinuity of $Q$ and $W_1$  from Lemma \ref{lemmaone}(iii), we have
			\begin{equation*}
				\begin{aligned}
					m&\leqslant I(u,v)=\frac{1}{2}\left(Q(u)+Q(v)\right)+\frac{1}{4}W_0(u,v)-\frac{1}{2p}R(u,v)\\
					&\leqslant\frac{1}{2}\liminf_{n\to\infty}\left[Q(\tilde{u}_n)+Q(\tilde{v}_n)\right]+\frac{1}{4}\liminf_{n\to\infty}W_1(\tilde{u}_n,\tilde{v}_n)-\frac{1}{4}W_2(\tilde{u}_n,\tilde{v}_n)-\frac{1}{2p}R(\tilde{u}_n,\tilde{v}_n)+o(1)\\            &=\liminf_{n\to\infty}I(\tilde{u}_n,\tilde{v}_n)=\liminf_{n\rightarrow\infty}I\left(u_n,v_n\right)=m,
				\end{aligned}
			\end{equation*}
			which implies that $I(u,v)=m$. 
		\end{proof}
		
		\section{Energy estimates}\label{sec4}
		In following, we assume that $p>2$. 
		Firstly, the Poho\v{z}aev identity for the planar Schr\"{o}dinger-Poisson system is to be given. As far as we know, there is no result devoted to this system. See Appendix A for the proof.

		\begin{lemma}\label{lemma2}
			If $(u,v)\in E\times E$ is a weak solution of the following system:
			\begin{equation}\label{equations of SP}
				\begin{cases}
					-\Delta u+\lambda_1u+\left[\log|\cdot|\ast\left(u^2+v^2\right)\right]u=\mu_{1}\left|u\right|^{2p-2}u+\beta\left|v\right|^{p}\left|u\right|^{p-2}u, \quad &\text{in } \R^2,\\
					-\Delta v+\lambda_2v+\left[\log|\cdot|\ast\left(u^2+v^2\right)\right]v=\mu_{2}\left|v\right|^{2p-2}v+\beta\left|u\right|^{p}\left|v\right|^{p-2}v,  \quad &\text{in } \R^2, 
				\end{cases} 
			\end{equation}
			then $(u,v)$ satisfies the following Pohaev identity:
			\begin{equation*}
				\begin{aligned}
					\lambda_1\int_{\R^2}|u|^2dx+&\lambda_2\int_{\R^2}|v|^2dx+\iint_{\R^2\times\R^2}\log|x-y|\left(u^2(x)+v^2(x)\right)\left(u^2(y)+v^2(y)\right)dxdy\\
					&+\dfrac{1}{4}\left(\int_{\R^2}(|u|^2+|v|^2)dx\right)^2=\dfrac{1}{p}\left(\mu_1\int_{\R^2}|u|^{2p}dx+\mu_2\int_{\R^2}|v|^{2p}dx+2\beta\int_{\R^2}|uv|^pdx\right).
				\end{aligned}
			\end{equation*}
			Moreover, any solution $(u,v)$ of \eqref{equations of SP} in $S(c_1)\times S(c_2)$ satisfies $\M(u,v)=0$, where $\M$ is defined as 
			\begin{equation}\label{Pohožaev-Nehari identity}
				\M\left(u,v\right)=Q(u)+Q(v)-\frac{p-1}{p}\left(\mu_{1}P(u)+\mu_{2}P(v)+2\beta P_0(u,v)\right)-\frac{(c_1+c_2)^2}{4}.
			\end{equation}
		\end{lemma}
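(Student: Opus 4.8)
\emph{Plan.} The identity I would prove by the classical Poho\v{z}aev multiplier method, adapted to the planar and nonlocal setting. Testing the first equation of \eqref{equations of SP} against $x\cdot\nabla u$, the second against $x\cdot\nabla v$, summing and integrating over $\R^2$ (rigorously over a ball $B_R(0)$, then letting $R\to\infty$), the contributions split as follows. In dimension two the kinetic part vanishes, since $\int_{\R^2}(-\Delta u)(x\cdot\nabla u)\,dx+\int_{\R^2}(-\Delta v)(x\cdot\nabla v)\,dx=\frac{N-2}{2}\left(Q(u)+Q(v)\right)=0$. The mass terms give $-\lambda_1\int_{\R^2}u^2\,dx-\lambda_2\int_{\R^2}v^2\,dx$ after one integration by parts. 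For the power nonlinearities I would use $|u|^{2p-2}u\,(x\cdot\nabla u)=\frac1{2p}x\cdot\nabla|u|^{2p}$, which collapses, upon integrating by parts, to $-\frac{\mu_1}{p}P(u)-\frac{\mu_2}{p}P(v)$; the coupling terms combine through $|v|^p|u|^{p-2}u\,(x\cdot\nabla u)+|u|^p|v|^{p-2}v\,(x\cdot\nabla v)=\frac1p\,x\cdot\nabla|uv|^p$, yielding $-\frac{2\beta}{p}P_0(u,v)$. Hence the whole right-hand side reduces to $-\frac1p R(u,v)$.

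\emph{The nonlocal term.} The genuinely new ingredient, and the main obstacle, is the logarithmic convolution contribution $\frac12\int_{\R^2}[\log|\cdot|\ast g]\,(x\cdot\nabla g)\,dx$, where $g:=u^2+v^2$ (using $u(x\cdot\nabla u)+v(x\cdot\nabla v)=\frac12 x\cdot\nabla g$). I would evaluate it through the scaling identity for the logarithmic bilinear form rather than by differentiating $\log|x-y|$ near the diagonal. Writing $D(g):=\iint_{\R^2\times\R^2}\log|x-y|\,g(x)g(y)\,dx\,dy=W_0(u,v)$ and $g_t(x):=t^2 g(tx)$, the change of variables $\xi=tx,\ \eta=ty$ together with $\log|x-y|=\log|\xi-\eta|-\log t$ gives $D(g_t)=D(g)-(\log t)\big(\int_{\R^2}g\big)^2$, since $\int_{\R^2}g_t=\int_{\R^2}g$. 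Differentiating at $t=1$, using the symmetry of $D$ and $\frac{d}{dt}g_t|_{t=1}=2g+x\cdot\nabla g$, produces $4W_0(u,v)+2\int_{\R^2}[\log|\cdot|\ast g]\,(x\cdot\nabla g)\,dx=-\big(\int_{\R^2}g\big)^2$, so the nonlocal contribution equals $-W_0(u,v)-\frac14\big(\int_{\R^2}(u^2+v^2)\big)^2$. Collecting all terms gives exactly the stated Poho\v{z}aev identity.

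\emph{The identity $\M=0$.} For the second claim I would test \eqref{equations of SP} against $(u,v)$ to obtain the Nehari-type relation $Q(u)+Q(v)+\lambda_1\int_{\R^2}u^2\,dx+\lambda_2\int_{\R^2}v^2\,dx+W_0(u,v)=R(u,v)$, and then subtract the Poho\v{z}aev identity just derived. The Lagrange multipliers $\lambda_1,\lambda_2$ and the term $W_0$ cancel, and using $\int_{\R^2}(u^2+v^2)\,dx=c_1+c_2$ on $S(c_1)\times S(c_2)$ one is left precisely with $Q(u)+Q(v)-\frac{p-1}{p}R(u,v)-\frac{(c_1+c_2)^2}{4}=\M(u,v)=0$.

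\emph{Where the real work lies.} The algebra above is routine; the difficulty is rigor. For a weak solution merely in $E\times E$ one must first upgrade regularity by elliptic bootstrap and establish enough decay of $u,v$ and their gradients so that $x\cdot\nabla u$ is an admissible multiplier and all boundary integrals over $\partial B_R(0)$ vanish as $R\to\infty$; the slowly growing logarithmic potential $\log|\cdot|\ast g$ makes these decay and integrability estimates the delicate point. I would control the nonlocal quantities through the splitting $\log|x-y|=\log(1+|x-y|)-\log(1+\frac1{|x-y|})$ of Section \ref{sec2}, handling $B_1$ by the $E$-norm and $B_2$ by the HLS inequality \eqref{HLS inequality}, and carry out the full justification in Appendix A.
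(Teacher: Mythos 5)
Your proposal follows the same overall skeleton as the paper's Appendix A: multiply the two equations by $x\cdot\nabla u$ and $x\cdot\nabla v$, integrate over $B_R(0)$, pass to the limit $R\to\infty$, and then subtract the Nehari identity (testing against $(u,v)$) from the Poho\v{z}aev identity to conclude $\M(u,v)=0$. Your algebra for the local terms and the final cancellation are exactly the paper's. The one genuinely different ingredient is your treatment of the nonlocal term: you differentiate the exact scaling identity $D(g_t)=D(g)-\log t\left(\int_{\R^2}g\right)^2$ at $t=1$ (with $g=u^2+v^2$, $g_t(x)=t^2g(tx)$), whereas the paper computes pointwise
\begin{equation*}
x\cdot\nabla w_{u,v}=\int_{\R^2}\frac{|x|^2-x\cdot y}{|x-y|^2}\,g(y)\,dy
\end{equation*}
and symmetrizes the kernel in $x\leftrightarrow y$ to get $\int_{\R^2}g\,(x\cdot\nabla w_{u,v})\,dx=\frac12\left(\int_{\R^2}g\right)^2$. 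The two computations are equivalent (yours is the integrated form of theirs) and both produce the contribution $-W_0(u,v)-\frac14\left(\int_{\R^2}g\right)^2$; yours avoids ever writing $\nabla w_{u,v}$, but at the price of justifying differentiation under the integral sign, while the paper's route is an exact pointwise identity plus Fubini and symmetrization, which is easier to make rigorous.

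Where your plan is genuinely weaker is the handling of the boundary integrals, which you yourself flag as ``the real work.'' You propose to prove decay of $u,v$ and their gradients strong enough that \emph{all} boundary integrals over $\partial B_R(0)$ vanish as $R\to\infty$. That is more than is needed and more than the hypotheses ($(u,v)\in E\times E$ only) obviously support: pointwise decay estimates in the presence of a logarithmically growing potential are a substantial task in their own right. The paper avoids this entirely: after upgrading regularity to $u,v\in C^2(\R^2)$ (quoting \cite{CFMM22}), it collects every boundary contribution into a single function $h\in L^1(\R^2)$ and uses the classical Berestycki--Lions device — if $R\int_{\partial B_R(0)}|h|\,dS$ stayed bounded away from $0$ for large $R$, the coarea formula would force $\int_{\R^2}|h|\,dx=\infty$ — to extract a sequence $R_n\to\infty$ along which $R_n\int_{\partial B_{R_n}(0)}|h|\,dS\to0$; a subsequence suffices to pass to the limit in \eqref{boundary}. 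Relatedly, your statement that the kinetic part ``vanishes since $\frac{N-2}{2}Q=0$'' is only the formal limit: on $B_R(0)$ the kinetic term reduces to pure boundary integrals (the paper's \eqref{sum1}), and these too are only killed along the subsequence $R_n$. If you replace your decay step by this $L^1$-plus-subsequence argument, your proof closes along the same lines as the paper's; as written, that step is a gap.
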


		We introduce the fiber map of arbitrary function: $\varphi(t,u)=tu(t\cdot)$ where $t\in \left(0,+\infty\right)$. This map preserves the $L^2$ norm, namely, $\lVert u\rVert_2=\lVert\varphi(t,u)\rVert_2$ for any $t>0$. Moreover, one can also check the following equalities for any $(u,v)\in S(c_1)\times S(c_2)$ and $t>0$:
		\begin{equation}\label{transform}
			\begin{aligned}
				Q\left(\varphi(t,u)\right)=t^2Q(u),&\quad P\left(\varphi(t,u)\right)=t^{2p-2}P(u),\\
				P_0\left(\varphi(t,u),\varphi(t,v)\right)=t^{2p-2}P_0(u,v),&\quad
				R\left(\varphi(t,u),\varphi(t,v)\right)=t^{2p-2}R(u,v),\\
				W_0\left(\varphi(t,u),\varphi(t,v)\right)&=W_0(u,v)-\left(c_1+c_2\right)^2\log t.	
			\end{aligned}
		\end{equation} 
		For any $(u,v)\in S(c_1)\times S(c_2)$, we define the following functions $F_{u,v}, f_{u,v},g_{u,v}:\left(0,+\infty\right)\to\R$ by
		\begin{equation}\label{Function of t}
			\begin{aligned}
				F_{u,v}(t):&=I\left(\varphi(t,u),\varphi(t,v)\right)\\
				&=\frac{t^2}{2}\left(Q(u)+Q(v)\right)+\frac{1}{4}\left[W_0(u,v)-\left(c_1+c_2\right)^2\log t\right]-\frac{t^{2p-2}}{2p}R(u,v).
			\end{aligned}
		\end{equation}
		
		\begin{equation}\label{fuv(t)}
			f_{u,v}(t):=F_{u,v}^{'}(t)=t\left(Q(u)+Q(v)\right)-\frac{\left(c_1+c_2\right)^2}{4t}-\frac{p-1}{p}t^{2p-3}R(u,v),
		\end{equation}
		
		\begin{equation}\label{guv(t)}
			g_{u,v}(t):=t^2f_{u,v}^{'}(t)=t^2\left(Q(u)+Q(v)\right)+\frac{\left(c_1+c_2\right)^2}{4}-\frac{(p-1)(2p-3)}{p}t^{2p-2}R(u,v).
		\end{equation}
		
		Next, we explore some properties of these function. 
		\begin{proposition}\label{proposition}
			Suppose that $A,B,C>0$ and $q>1$. Let  $f(t)=At-\frac{B}{t}-Ct^q$ and $g(t)=At^2+B-Cqt^{q+1}$, where $t\in(0,+\infty)$. Then  there exist $\bar{t}$, $t^{-}$ and $t^{+}$ such that 
			\begin{enumerate}[{\rm(i)}]
				\item $0<t^{+}<\bar{t}<t^{-}$ and  $f(t^{+})=f(t^{-})=0$; 
				\item $f^{'}(t^{+})>0$, $f^{'}(t^{-})<0$;
				\item  $g(t)>0$ on $(0, \bar{t})$ and $g(t)<0$ on $(\bar{t},+\infty)$.
			\end{enumerate}
			if and only if  
			\begin{equation}\label{fomula2}
				\frac{\left(q-1\right)^{\frac{q-1}{2}}}{\left(q+1\right)^{\frac{q+1}{2}}}\cdot\frac{A^{\frac{q+1}{2}}}{B^{\frac{q-1}{2}}}>\frac{C}{2}.
			\end{equation}
		\end{proposition}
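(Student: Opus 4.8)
The plan is to exploit the algebraic link $g(t)=t^{2}f'(t)$ between the two functions, so that on $(0,+\infty)$ the sign of $f'$ coincides with the sign of $g$; a single analysis of $g$ then controls the monotonicity of $f$ and at the same time yields (iii). First I would study $g(t)=At^{2}+B-Cqt^{q+1}$ directly: since $q>1$ we have $g(0^{+})=B>0$ and $g(t)\to-\infty$ as $t\to+\infty$, while $g'(t)=t\bigl(2A-Cq(q+1)t^{q-1}\bigr)$ vanishes at the single positive point $t_{*}=\bigl(2A/(Cq(q+1))\bigr)^{1/(q-1)}$, with $g$ strictly increasing on $(0,t_{*})$ and strictly decreasing on $(t_{*},+\infty)$. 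Hence $g$ is positive, rises to its maximum, and then decreases monotonically to $-\infty$, so it has exactly one zero $\bar t>t_{*}$ with $g>0$ on $(0,\bar t)$ and $g<0$ on $(\bar t,+\infty)$. This proves (iii) \emph{unconditionally}, and since $f'=g/t^{2}$ it shows that $f$ is strictly increasing on $(0,\bar t)$ and strictly decreasing on $(\bar t,+\infty)$, so $\bar t$ is the unique maximum point of $f$.

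Next I would reduce the existence of $t^{+},t^{-}$ in (i)--(ii) to the single inequality $f(\bar t)>0$. The boundary behaviour $f(0^{+})=-\infty$ (from the $-B/t$ term) and $f(t)\to-\infty$ as $t\to+\infty$ (from $-Ct^{q}$ with $q>1$), together with the monotonicity just established, imply that $f$ has exactly two positive zeros when $f(\bar t)>0$ and none (or a double root) otherwise. When $f(\bar t)>0$, the intermediate value theorem produces $t^{+}\in(0,\bar t)$ and $t^{-}\in(\bar t,+\infty)$ with $f(t^{\pm})=0$, and because $f'=g/t^{2}$ is positive on $(0,\bar t)$ and negative on $(\bar t,+\infty)$ we automatically get $f'(t^{+})>0$ and $f'(t^{-})<0$, i.e. (ii). Conversely, two zeros $t^{+}<\bar t<t^{-}$ force $f(\bar t)>f(t^{+})=0$ by strict monotonicity. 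Thus the full package (i)--(iii) is equivalent to $\max_{t>0}f(t)=f(\bar t)>0$.

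It remains to translate $\max_{t>0}f>0$ into \eqref{fomula2}, and this is the computational heart of the argument. Writing $f(t)=t^{q}\bigl(\phi(t)-C\bigr)$ with $\phi(t):=At^{1-q}-Bt^{-1-q}$, the positivity of $f$ at some point is equivalent to $\max_{t>0}\phi(t)>C$. I would then maximise $\phi$: solving $\phi'(t)=0$ gives $A(q-1)t^{2}=B(q+1)$, so the unique maximiser is $t_{0}=\sqrt{B(q+1)/\bigl(A(q-1)\bigr)}$, and substituting back (using $At_{0}^{2}-B=2B/(q-1)$) yields
\[
\max_{t>0}\phi=\phi(t_{0})=\frac{2B}{q-1}\,t_{0}^{-(q+1)}=2\,\frac{(q-1)^{\frac{q-1}{2}}}{(q+1)^{\frac{q+1}{2}}}\cdot\frac{A^{\frac{q+1}{2}}}{B^{\frac{q-1}{2}}}.
\]
Hence $f(\bar t)>0\iff\phi(t_{0})>C$, which is exactly the stated inequality \eqref{fomula2}. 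I expect the main obstacle to be precisely this last simplification—keeping the fractional exponents of $A$, $B$, $q-1$ and $q+1$ straight when reducing $\phi(t_{0})$ to the closed form above; everything else is elementary single-variable calculus. A minor point to record carefully is that $\phi$ genuinely attains its supremum at $t_{0}$ (it tends to $-\infty$ as $t\to0^{+}$ and to $0^{+}$ as $t\to+\infty$), so that ``some $t$ with $\phi(t)>C$'' is equivalent to ``$\phi(t_{0})>C$''.
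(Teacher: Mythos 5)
Your proposal is correct and takes essentially the same route as the paper's (very terse) proof: both rest on the identity $g(t)=t^{2}f'(t)$, the resulting unimodal shape of $f$, and a threshold computation at the point $t_{0}=\sqrt{\tfrac{q+1}{q-1}\tfrac{B}{A}}$ — the paper states the condition as $g(t_{0})>0$, while you state it as $\max_{t>0}\phi(t)=\phi(t_{0})>C$ for $\phi=f(t)/t^{q}+C$, and these coincide since $g(t_{0})=q\,t_{0}^{q+1}\left(\phi(t_{0})-C\right)$. Your write-up simply supplies in full the details the paper dismisses as ``easy to check,'' and the final computation reproducing \eqref{fomula2} is accurate.
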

		\begin{proof}
			It is easy to check that $f$ has two zeros satisfying (i) and (ii) if and only if  $g\left(\sqrt{\frac{q+1}{q-1}\frac{B}{A}}\right)>0$, which is equivalent to \eqref{fomula2}, and finally note that $g(t)=t^2f^{'}(t)$.
		\end{proof}

		\begin{corollary}\label{corollary2}
			Suppose that $F:(0,+\infty)\to\R$ defined by $F(t):=\frac{1}{2}At^2-B\log t-\frac{C}{q+1}t^{q+1}+W$ where  $A,B,C>0$, $q>1$ satisfying \eqref{fomula2}, and $W$ is a real number. Then $F$ has exactly two critical points in $(0,+\infty)$: the local minimizer $t^{+}$ and the local maximizer $t^{-}$ such that $t^{+}<t^{-}$ and $F(t^{+})<F(t^{-})$.
		\end{corollary}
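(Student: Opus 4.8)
The plan is to reduce the statement entirely to Proposition \ref{proposition} by recognizing that $f$ is exactly $F'$ and $g$ is exactly $t^2F''$. A direct differentiation gives
\[
F'(t)=At-\frac{B}{t}-Ct^q=f(t),\qquad t^2F''(t)=At^2+B-Cqt^{q+1}=g(t),
\]
so the critical points of $F$ on $(0,+\infty)$ are precisely the zeros of $f$, and (since $t^2>0$) the sign of $F''$ agrees with that of $g$. Because $A,B,C>0$, $q>1$ and \eqref{fomula2} holds, I would invoke Proposition \ref{proposition} to obtain $0<t^+<\bar t<t^-$ with $f(t^+)=f(t^-)=0$, $f'(t^+)>0$, $f'(t^-)<0$, together with $g>0$ on $(0,\bar t)$ and $g<0$ on $(\bar t,+\infty)$. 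This immediately certifies that $t^+$ and $t^-$ are critical points of $F$.

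The one step that requires a genuine (if short) argument is \emph{uniqueness}: that these are the only two critical points. Here I would use part (iii) of Proposition \ref{proposition}. From $g=t^2f'$ and the sign information on $g$, we get $f'>0$ on $(0,\bar t)$ and $f'<0$ on $(\bar t,+\infty)$, so $f$ is strictly increasing on $(0,\bar t)$ and strictly decreasing on $(\bar t,+\infty)$. A single-humped function of this type has at most two zeros, and Proposition \ref{proposition}(i) already exhibits two; hence $t^+$ and $t^-$ exhaust the zeros of $f$, i.e.\ the critical points of $F$.

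Next I would pin down the nature of the two critical points by tracking the sign of $F'=f$. Since $f(t)\to-\infty$ as $t\to0^+$ (the $-B/t$ term dominates) and as $t\to+\infty$ (the $-Ct^q$ term dominates with $q>1$), and $f$ rises then falls with zeros at $t^+<t^-$, we conclude $f<0$ on $(0,t^+)$, $f>0$ on $(t^+,t^-)$, and $f<0$ on $(t^-,+\infty)$. Thus $F$ decreases then increases across $t^+$ and increases then decreases across $t^-$, making $t^+$ a local minimizer and $t^-$ a local maximizer; equivalently one may read this off from $F''(t^+)=f'(t^+)>0$ and $F''(t^-)=f'(t^-)<0$.

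Finally, the ordering $F(t^+)<F(t^-)$ follows with no extra work: since $F'=f>0$ throughout $(t^+,t^-)$, the function $F$ is strictly increasing on $[t^+,t^-]$, whence $F(t^+)<F(t^-)$. I do not anticipate any real difficulty in this corollary—the content is a faithful translation between $(f,g)$ and $(F',F'')$—with the only point needing care being the uniqueness claim, which the monotonicity of $f$ supplied by Proposition \ref{proposition}(iii) settles cleanly.
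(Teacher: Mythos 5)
Your proposal is correct and follows the same route as the paper: the paper's proof is the one-line observation that $F'=f$ from Proposition \ref{proposition}, after which all conclusions follow. You have simply supplied the details the paper leaves implicit (uniqueness of the two zeros via the monotonicity of $f$ given by $g=t^2f'$, and $F(t^+)<F(t^-)$ via $f>0$ on $(t^+,t^-)$), all of which check out.
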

		\begin{proof}
			Noticing that the derivative of $F(t)$ is $f(t)$ defined in Proposition \ref{proposition}, the conclusions are clear by last proposition.
		\end{proof}

		\begin{lemma}\label{lemma1}
			Let all the assumptions of Theorem \ref{thm2} hold. Then for every $(u,v)\in S(c_1)\times S(c_2)$, $F_{u,v}(t)$ has a unique local minimizer $t_{u,v}^{+}$ and a unique local maximizer $t_{u,v}^{-}$, $t_{u,v}^{+}<t_{u,v}^{-}$, satisfying:
			\begin{enumerate}[\rm(i)]
				\item $F(t_{u,v}^{+})<F(t_{u,v}^{-})$;
				\item $f_{u,v}(t)>0$ on $(t_{u,v}^{+},t_{u,v}^{-})$ and $f_{u,v}(t)<0$ on $(0,t_{u,v}^{+})$ and $(t_{u,v}^{-},+\infty)$;
				\item There exists $\bar{t}_{u,v}$ with $0<t_{u,v}^{+}<\bar{t}_{u,v}<t_{u,v}^{-}$ such that $g_{u,v}(t)>0$ on $(0,\bar{t}_{u,v})$ and $g_{u,v}(t)<0$ on $(\bar{t}_{u,v},+\infty)$.
			\end{enumerate}		 
		\end{lemma}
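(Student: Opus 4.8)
The plan is to recognize that $F_{u,v}$, $f_{u,v}$ and $g_{u,v}$ are instances of the abstract one–variable functions analyzed in Proposition \ref{proposition} and Corollary \ref{corollary2}, so that all three claimed properties follow at once once a single scalar inequality is verified. Reading off \eqref{Function of t}, \eqref{fuv(t)} and \eqref{guv(t)}, I would set
\[
A=Q(u)+Q(v),\qquad B=\frac{(c_1+c_2)^2}{4},\qquad C=\frac{p-1}{p}R(u,v),\qquad q=2p-3,
\]
together with $W=\tfrac14 W_0(u,v)$, and check that with these choices $F_{u,v}(t)=\tfrac12At^2-B\log t-\tfrac{C}{q+1}t^{q+1}+W$, that $f_{u,v}=F_{u,v}'$ coincides with $At-B/t-Ct^q$, and that $g_{u,v}=t^2f_{u,v}'$ coincides with $At^2+B-Cqt^{q+1}$. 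Since $u\in S(c_1)$ and $v\in S(c_2)$ are nontrivial one has $A>0$ and $B>0$; because $\mu_1,\mu_2,\beta>0$ and $P(u),P(v),P_0(u,v)>0$ one has $R(u,v)>0$, hence $C>0$; and $p>2$ gives $q=2p-3>1$. Thus every hypothesis of Proposition \ref{proposition} and Corollary \ref{corollary2} is satisfied except possibly the decisive inequality \eqref{fomula2}.

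The heart of the argument, and the step I expect to require the most care, is to derive \eqref{fomula2} from the energy constraint \eqref{energyconstrain}. I would first bound $R(u,v)$ uniformly over $S(c_1)\times S(c_2)$ using Lemma \ref{lemmathree}: combining $P_0(u,v)\leqslant\tfrac12(P(u)+P(v))$ with the Gagliardo--Nirenberg estimates gives
\[
R(u,v)\leqslant(\mu_1+\beta)P(u)+(\mu_2+\beta)P(v)\leqslant\mu_0 K_{2p}\big(c_1 Q(u)^{p-1}+c_2 Q(v)^{p-1}\big),
\]
and then, using $Q(u)^{p-1},Q(v)^{p-1}\leqslant(Q(u)+Q(v))^{p-1}=A^{p-1}$, one obtains the clean bound $R(u,v)\leqslant\mu_0 K_{2p}(c_1+c_2)A^{p-1}$, whence $C\leqslant\frac{p-1}{p}\mu_0 K_{2p}(c_1+c_2)A^{p-1}$.

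The crucial observation is a cancellation: since $\tfrac{q+1}{2}=p-1$, the factor $A^{(q+1)/2}=A^{p-1}$ appearing on the left of \eqref{fomula2} is exactly the power of $A$ occurring in the bound on $C$, so the dependence on $(u,v)$ through $A$ drops out entirely. After substituting $\tfrac{q-1}{2}=p-2$, $B^{p-2}=(c_1+c_2)^{2(p-2)}/4^{p-2}$, and simplifying the numerical factor $\tfrac{(q-1)^{(q-1)/2}}{(q+1)^{(q+1)/2}}=\tfrac{(p-2)^{p-2}}{2(p-1)^{p-1}}$, I would verify that \eqref{fomula2} reduces precisely to
\[
(c_1+c_2)^{2p-3}<\frac{4^{p-2}\,p\,(p-2)^{p-2}}{K_{2p}\,\mu_0\,(p-1)^{p}},
\]
which is exactly the assumed bound \eqref{energyconstrain} after taking the $(2p-3)$-th root. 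Since the estimate on $C$ is uniform in $(u,v)$ and \eqref{energyconstrain} is a strict inequality, \eqref{fomula2} holds for \emph{every} $(u,v)\in S(c_1)\times S(c_2)$.

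With \eqref{fomula2} in hand, Proposition \ref{proposition} yields the two zeros $t_{u,v}^+<t_{u,v}^-$ of $f_{u,v}$ with $f_{u,v}'(t_{u,v}^+)>0$, $f_{u,v}'(t_{u,v}^-)<0$, together with the sign pattern of $g_{u,v}$ asserted in (iii), while Corollary \ref{corollary2} identifies $t_{u,v}^+$ as the unique local minimizer and $t_{u,v}^-$ as the unique local maximizer of $F_{u,v}$ with $F_{u,v}(t_{u,v}^+)<F_{u,v}(t_{u,v}^-)$, which is (i). Finally, the sign statement (ii) follows because $f_{u,v}(t)\to-\infty$ both as $t\to0^+$ (from the $-B/t$ term) and as $t\to+\infty$ (from $-Ct^q$ with $q>1$), so the two simple zeros force $f_{u,v}<0$ on $(0,t_{u,v}^+)\cup(t_{u,v}^-,+\infty)$ and $f_{u,v}>0$ on $(t_{u,v}^+,t_{u,v}^-)$, completing the proof.
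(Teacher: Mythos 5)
Your proposal is correct and takes essentially the same route as the paper's proof: both substitute $A=Q(u)+Q(v)$, $B=\frac{(c_1+c_2)^2}{4}$, $C=\frac{p-1}{p}R(u,v)$, $q=2p-3$ into Proposition \ref{proposition} and Corollary \ref{corollary2}, bound $R(u,v)\leqslant K_{2p}\mu_{0}\left(Q(u)+Q(v)\right)^{p-1}(c_1+c_2)$ via Lemma \ref{lemmathree}, and reduce the verification of \eqref{fomula2} to the assumed constraint \eqref{energyconstrain}, exploiting exactly the cancellation of $A^{p-1}$ you identify. Your explicit algebraic check of the numerical factor and the reduction to $(c_1+c_2)^{2p-3}<\frac{4^{p-2}p(p-2)^{p-2}}{K_{2p}\mu_{0}(p-1)^{p}}$ is accurate and matches the paper's computation.
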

		\begin{proof}
			Setting $A=Q(u)+Q(v)$, $B=\frac{\left(c_1+c_2\right)^2}{4}$, $C=\frac{p-1}{p}R(u,v)$, $W=\frac{1}{4}W_0(u,v)$ and $q=2p-3$ in Proposition \ref{proposition} and Corollary \ref{corollary2}, it suffices to verify \eqref{fomula2}, i.e.,
			\begin{equation*}
				c_1+c_2<2\left[\frac{p(p-2)^{p-2}}{(p-1)^p}\cdot\frac{\left(Q(u)+Q(v)\right)^{p-1}}{R(u,v)}\right]^{\frac{1}{2p-4}}.
			\end{equation*}
			Due to Lemma \ref{lemmathree} we have that
			\begin{equation}\label{fomulaR}
				\begin{aligned}
					R(u,v)&\leqslant K_{2p}\left[\left(\mu_{1}+\beta\right)Q(u)^{p-1}c_1+\left(\mu_{2}+\beta\right)Q(v)^{p-1}c_2\right]\\
					&\leqslant K_{2p}\mu_{0}\left(Q(u)^{p-1}+Q(v)^{p-1}\right)(c_1+c_2)\\
					&\leqslant K_{2p}\mu_{0}\left(Q(u)+Q(v)\right)^{p-1}(c_1+c_2),
				\end{aligned}
			\end{equation}
			where $\mu_{0}=\max\left\{\mu_{1}+\beta,\mu_{2}+\beta\right\}$. Together with (\ref{energyconstrain}) we obtain that 
			\begin{equation*}
				c_1+c_2<4^{\frac{p-2}{2p-3}}\left[\frac{p(p-2)^{p-2}}{K_{2p}\mu_{0}(p-1)^{p}}\right]^{\frac{1}{2p-3}}\leqslant4^{\frac{p-2}{2p-3}}\left[\frac{p(p-2)^{p-2}}{(p-1)^{p}}\cdot\frac{\left(Q(u)+Q(v)\right)^{p-1}(c_1+c_2)}{R(u,v)}\right]^{\frac{1}{2p-3}},
			\end{equation*} 
			which completes the proof.
		\end{proof}

		Let	\begin{equation}\label{Pohožaev-Nehari set}
			\Omega(c_1,c_2)=\left\{(u,v)\in S(c_1)\times S(c_2):\M\left(u,v\right)=0\right\},
		\end{equation}
		where $\M$ is defined as \eqref{Pohožaev-Nehari identity}.
		In view of \eqref{Function of t}, we find that  $F_{u,v}'(1)=\M(u,v)$. On the other hand, Lemma \ref{lemma2}  illustrates that any critical point of $I(u,v)$ constrained on $S(c_1)\times S(c_2)$ is always contained in $\Omega(c_1,c_2)$. So we divide $\Omega(c_1,c_2)$ into three disjoint parts:
		\begin{equation*}
			\begin{aligned}
				\Omega^{+}(c_1,c_2)=\left\{(u,v)\in S(c_1)\times S(c_2):F_{u,v}^{'}(1)=0,F_{u,v}^{''}(1)>0\right\},\\
				\Omega^{-}(c_1,c_2)=\left\{(u,v)\in S(c_1)\times S(c_2):F_{u,v}^{'}(1)=0,F_{u,v}^{''}(1)<0\right\},\\
				\Omega^{0}(c_1,c_2)=\left\{(u,v)\in S(c_1)\times S(c_2):F_{u,v}^{'}(1)=0,F_{u,v}^{''}(1)=0\right\}.
			\end{aligned}
		\end{equation*}
		By means of \eqref{fuv(t)} and \eqref{guv(t)}, the set $\Omega^{+}$, $\Omega^{-}$ and $\Omega^{0}$ can be represented equivalently as 
		\begin{equation}\label{Omega}
			\begin{aligned}
				\Omega^{+}(c_1,c_2)=\left\{(u,v)\in S(c_1)\times S(c_2):f_{u,v}(1)=0,g_{u,v}(1)>0\right\},\\
				\Omega^{-}(c_1,c_2)=\left\{(u,v)\in S(c_1)\times S(c_2):f_{u,v}(1)=0,g_{u,v}(1)<0\right\},\\
				\Omega^{0}(c_1,c_2)=\left\{(u,v)\in S(c_1)\times S(c_2):f_{u,v}(1)=0,g_{u,v}(1)=0\right\}.
			\end{aligned}
		\end{equation}
		
		\begin{lemma}\label{lemma3}
			Let $(u,v)\in S(c_1)\times S(c_2)$, then
			\begin{equation}\label{Omega+-}		  \left(\varphi(t_{u,v}^{+},u),\varphi(t_{u,v}^{+},v)\right)\in\Omega^{+}(c_1,c_2)\quad\text{and }\quad	\left(\varphi(t_{u,v}^{-},u),\varphi(t_{u,v}^{-},v)\right)\in\Omega^{-}(c_1,c_2),
			\end{equation} 
			where $t_{u,v}^{+}$ and $t_{u,v}^{-}$ are obtained in Lemma {\rm\ref{lemma1}}. Moreover, $\Omega^{0}(c_1,c_2)=\varnothing$.
		\end{lemma}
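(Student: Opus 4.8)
The plan is to transfer the one-variable analysis of Lemma \ref{lemma1} to the constraint sets in \eqref{Omega} via the scaling rules of the fiber map. Writing $(u_t,v_t):=(\varphi(t,u),\varphi(t,v))$, the key point is that dilations compose, $\varphi(s,\varphi(t,u))=\varphi(st,u)$, so that
\begin{equation*}
F_{u_t,v_t}(s)=I\!\left(\varphi(st,u),\varphi(st,v)\right)=F_{u,v}(st).
\end{equation*}
Differentiating in $s$ and evaluating at $s=1$ yields the two transfer identities
\begin{equation*}
f_{u_t,v_t}(1)=t\,f_{u,v}(t),\qquad g_{u_t,v_t}(1)=g_{u,v}(t).
\end{equation*}
These may equivalently be checked by substituting $Q(\varphi(t,u))=t^2Q(u)$ and $R(\varphi(t,u),\varphi(t,v))=t^{2p-2}R(u,v)$ from \eqref{transform} directly into the explicit formulas \eqref{fuv(t)} and \eqref{guv(t)}, noting that $\varphi(t,\cdot)$ preserves the $L^2$-norm, so $(u_t,v_t)\in S(c_1)\times S(c_2)$.

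With these identities in hand, I would evaluate at the two critical radii supplied by Lemma \ref{lemma1}. Take first $t=t_{u,v}^{+}$, so that $(u_t,v_t)=(\varphi(t_{u,v}^{+},u),\varphi(t_{u,v}^{+},v))$. Since $f_{u,v}(t_{u,v}^{+})=0$, the first transfer identity gives $f_{u_t,v_t}(1)=t_{u,v}^{+}f_{u,v}(t_{u,v}^{+})=0$, while $t_{u,v}^{+}<\bar{t}_{u,v}$ together with Lemma \ref{lemma1}(iii) gives $g_{u_t,v_t}(1)=g_{u,v}(t_{u,v}^{+})>0$; by the characterization \eqref{Omega} the pair $(u_t,v_t)$ therefore lies in $\Omega^{+}(c_1,c_2)$. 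Taking instead $t=t_{u,v}^{-}$, the same computation with $f_{u,v}(t_{u,v}^{-})=0$ and $t_{u,v}^{-}>\bar{t}_{u,v}$ gives $f_{u_t,v_t}(1)=0$ and $g_{u_t,v_t}(1)=g_{u,v}(t_{u,v}^{-})<0$, placing the rescaled pair in $\Omega^{-}(c_1,c_2)$.

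To conclude $\Omega^{0}(c_1,c_2)=\varnothing$, I argue by contradiction. If $(u,v)\in\Omega^{0}(c_1,c_2)$ then $f_{u,v}(1)=0$, so $t=1$ is a critical point of $F_{u,v}$; by Lemma \ref{lemma1}(ii) the only zeros of $f_{u,v}$ are $t_{u,v}^{+}$ and $t_{u,v}^{-}$, forcing $1\in\{t_{u,v}^{+},t_{u,v}^{-}\}$. But Lemma \ref{lemma1}(iii) yields $g_{u,v}(t_{u,v}^{+})>0$ and $g_{u,v}(t_{u,v}^{-})<0$, so $g_{u,v}(1)\neq 0$ in either case, contradicting the defining condition $g_{u,v}(1)=0$ in \eqref{Omega}.

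I do not anticipate a substantial obstacle: the argument is essentially bookkeeping once the transfer identities are recorded. The only point requiring care is keeping the scaling consistent---that rescaling $(u_t,v_t)$ at parameter $s=1$ reproduces $F_{u,v}$ evaluated at $t$---and confirming that the strict ordering $t_{u,v}^{+}<\bar{t}_{u,v}<t_{u,v}^{-}$ from Lemma \ref{lemma1} indeed delivers the sharp strict signs of $g_{u,v}$ at $t_{u,v}^{\pm}$ needed to land in the correct component.
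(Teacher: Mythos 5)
Your proposal is correct and follows essentially the same route as the paper: both establish the transfer identities $f_{\varphi(t,u),\varphi(t,v)}(1)=t\,f_{u,v}(t)$ and $g_{\varphi(t,u),\varphi(t,v)}(1)=g_{u,v}(t)$ from \eqref{transform}, evaluate them at $t_{u,v}^{\pm}$ using the sign information in Lemma \ref{lemma1}, and rule out $\Omega^{0}(c_1,c_2)$ by the same contradiction (a point of $\Omega^{0}$ would force $1\in\{t_{u,v}^{+},t_{u,v}^{-}\}$, where $g_{u,v}$ is nonvanishing). Your additional derivation of the transfer identities via the composition law $\varphi(s,\varphi(t,u))=\varphi(st,u)$ is a nice way to see why they hold, but it is the same argument in substance.
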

		\begin{proof}
			Suppose $(u,v)\in S(c_1)\times S(c_2)$. From \eqref{transform}, for every $t\in(0,+\infty)$ we have
			\begin{equation*}
				f_{\varphi(t,u),\varphi(t,v)}(1)=tf_{u,v}(t),\quad g_{\varphi(t,u),\varphi(t,v)}(1)=g_{u,v}(t).
			\end{equation*}
			Then, by Lemma \ref{lemma1} and \eqref{Omega}, we know \eqref{Omega+-} holds. 
			If there exists $(u',v')\in\Omega^{0}(c_1,c_2)$, then by $f_{u',v'}(1)=0$ and Lemma \ref{lemma1} we have that either $t_{u',v'}^{+}=1$ or $t_{u',v'}^{-}=1$. However, both $g_{u',v'}(t_{u',v'}^{+})$ and $g_{u',v'}(t_{u',v'}^{-})$ are not vanishing by Lemma \ref{lemma1}(iii), which leads to a contradiction.
		\end{proof}
		\begin{remark}\label{unique interation}
			\rm{From Lemma \ref{lemma1}, for every $(u,v)\in S(c_1)\times S(c_2)$, the map $(u,v)\mapsto(\varphi(t,u),\varphi(t,v))$ intersects $\Omega^{+}(c_1,c_2)$ uniquely at $(\varphi(t_{u,v}^{+},u),\varphi(t_{u,v}^{+},v))$ and $\Omega^{-}(c_1,c_2)$ uniquely at $(\varphi(t_{u,v}^{-},u),\varphi(t_{u,v}^{-},v))$.}
		\end{remark}
		\begin{lemma}\label{lemma4}
			Under the assumption of Theorem \ref{thm2}, we have that 
			\begin{equation*}
				Q(u)+Q(v)<\frac{p-1}{p-2}\frac{\left(c_1+c_2\right)^2}{4},\quad\text{if }(u,v)\in\Omega^{+}(c_1,c_2)
			\end{equation*}
			and
			\begin{equation*}
				Q(u)+Q(v)>\frac{p-1}{p-2}\frac{\left(c_1+c_2\right)^2}{4},\quad\text{if }(u,v)\in\Omega^{-}(c_1,c_2).
			\end{equation*}
		\end{lemma}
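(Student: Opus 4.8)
The plan is to extract, from membership in $\Omega^{\pm}(c_1,c_2)$, two scalar relations and to eliminate the coupling term $R(u,v)$ between them. Recall from \eqref{Omega} that $(u,v)\in\Omega^{+}(c_1,c_2)$ means $f_{u,v}(1)=0$ together with $g_{u,v}(1)>0$, while $(u,v)\in\Omega^{-}(c_1,c_2)$ means $f_{u,v}(1)=0$ together with $g_{u,v}(1)<0$. Evaluating \eqref{fuv(t)} and \eqref{guv(t)} at $t=1$ gives
\begin{align*}
f_{u,v}(1)&=\left(Q(u)+Q(v)\right)-\frac{(c_1+c_2)^2}{4}-\frac{p-1}{p}R(u,v),\\
g_{u,v}(1)&=\left(Q(u)+Q(v)\right)+\frac{(c_1+c_2)^2}{4}-\frac{(p-1)(2p-3)}{p}R(u,v).
\end{align*}
Note that $f_{u,v}(1)=\M(u,v)$, so the first relation is exactly the Poho\v{z}aev--Nehari constraint.

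First I would use $f_{u,v}(1)=0$ to solve for the coupling term, obtaining $\frac{p-1}{p}R(u,v)=\left(Q(u)+Q(v)\right)-\frac{(c_1+c_2)^2}{4}$. Substituting this into the formula for $g_{u,v}(1)$ eliminates $R(u,v)$ entirely: the surviving factor $(2p-3)$ multiplies $\left(Q(u)+Q(v)\right)-\frac{(c_1+c_2)^2}{4}$, so the whole expression collapses to a linear combination of $Q(u)+Q(v)$ and $\frac{(c_1+c_2)^2}{4}$. Carrying out the arithmetic, using $1-(2p-3)=-2(p-2)$ and $1+(2p-3)=2(p-1)$, yields
\begin{equation*}
g_{u,v}(1)=-2(p-2)\left(Q(u)+Q(v)\right)+2(p-1)\frac{(c_1+c_2)^2}{4}.
\end{equation*}

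Finally I would read off the two cases. Since $p>2$, the coefficient $2(p-2)$ is positive, so after dividing by it the sign of $g_{u,v}(1)$ is governed by the comparison between $Q(u)+Q(v)$ and $\frac{p-1}{p-2}\frac{(c_1+c_2)^2}{4}$. Precisely, $g_{u,v}(1)>0$ is equivalent to $Q(u)+Q(v)<\frac{p-1}{p-2}\frac{(c_1+c_2)^2}{4}$, which is the asserted inequality on $\Omega^{+}(c_1,c_2)$, and $g_{u,v}(1)<0$ is equivalent to the reverse inequality, giving the claim on $\Omega^{-}(c_1,c_2)$.

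There is no genuine obstacle here: the argument is the elimination of $R(u,v)$ via the constraint $\M(u,v)=0$, followed by reading off a sign, and the hypothesis \eqref{energyconstrain} is not even needed for this lemma (it enters only through Lemma~\ref{lemma1} to guarantee that $\Omega^{\pm}$ are nonempty). The single point requiring care is the bookkeeping of the coefficients $1\mp(2p-3)$, since a sign slip there would reverse both inequalities; keeping the factorizations $-2(p-2)$ and $2(p-1)$ explicit guards against this.
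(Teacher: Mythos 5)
Your proposal is correct and is exactly the argument the paper intends: the paper's own proof is the one-line remark that $f_{u,v}(1)=0$ together with the sign of $g_{u,v}(1)$ gives the conclusion, and your elimination of $R(u,v)$, yielding $g_{u,v}(1)=-2(p-2)\left(Q(u)+Q(v)\right)+2(p-1)\frac{(c_1+c_2)^2}{4}$, is just that computation carried out explicitly (and your coefficient bookkeeping is right). Your side remark that \eqref{energyconstrain} is not needed for this implication, only for nonemptiness of $\Omega^{\pm}(c_1,c_2)$ via Lemma \ref{lemma1}, is also accurate.
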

		\begin{proof}
			According to the definition of $\Omega^{+}(c_1,c_2)$ in (\ref{Omega}), inequality $g_{u,v}(1)>0$ and equality $f_{u,v}(1)=0$ imply the first part. The second part is similar.
		\end{proof}
		Set 
		\begin{equation*}
			\mathcal{A}(c_1,c_2)=\left\{(u,v)\in S(c_1)\times S(c_2):Q(u)+Q(v)<\frac{p-1}{p-2}\frac{\left(c_1+c_2\right)^2}{4}\right\},
		\end{equation*}
		and introduce the following notations:
		\begin{equation}\label{minimumeneergy}
			m_1=\inf_{\Omega^{+}(c_1,c_2)}I(u,v),\quad
			\tilde{m}=\inf_{\mathcal{A}(c_1,c_2)}I(u,v),\quad m_2=\inf_{\Omega^{-}(c_1,c_2)}I(u,v).
		\end{equation}
		Then we have following relation among the above quantities.
		\begin{lemma}\label{lemma5}
			Under the assumption of Theorem \ref{thm2}, we have $m_1=\tilde{m}\leqslant m_2$. Moreover, if $m_2$ can be attained, then the inequality is strict.
		\end{lemma}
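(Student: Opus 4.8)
The plan is to run a fibering-map comparison, exploiting the scaling identity $Q(\varphi(t,u))=t^2Q(u)$ from \eqref{transform} together with the monotonicity structure of $F_{u,v}$ recorded in Lemmas \ref{lemma1} and \ref{lemma3} and the kinetic-energy dichotomy of Lemma \ref{lemma4}; recall $m_1=\inf_{\Omega^+}I$, $\tilde m=\inf_{\mathcal A}I$, $m_2=\inf_{\Omega^-}I$. First I would establish $\tilde m\le m_1$: by Lemma \ref{lemma4}, every $(u,v)\in\Omega^+(c_1,c_2)$ satisfies $Q(u)+Q(v)<\frac{p-1}{p-2}\frac{(c_1+c_2)^2}{4}$, so $\Omega^+(c_1,c_2)\subset\mathcal A(c_1,c_2)$, and taking infima over the larger set gives $\tilde m\le m_1$ at once.

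Next I would prove the reverse bound $m_1\le\tilde m$, which is the crux. Fix an arbitrary $(u,v)\in\mathcal A(c_1,c_2)$ and look along its fiber. By Lemma \ref{lemma3} we have $(\varphi(t_{u,v}^-,u),\varphi(t_{u,v}^-,v))\in\Omega^-(c_1,c_2)$, so Lemma \ref{lemma4} and the scaling identity yield $(t_{u,v}^-)^2\bigl(Q(u)+Q(v)\bigr)>\frac{p-1}{p-2}\frac{(c_1+c_2)^2}{4}>Q(u)+Q(v)$, the last inequality being precisely the membership $(u,v)\in\mathcal A$; since $Q(u)+Q(v)>0$ this forces $t_{u,v}^->1$. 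Now Lemma \ref{lemma1}(ii) makes $F_{u,v}$ nonincreasing on $(0,t_{u,v}^+)$ and nondecreasing on $(t_{u,v}^+,t_{u,v}^-)$, so $t_{u,v}^+$ minimizes $F_{u,v}$ on $(0,t_{u,v}^-]$; as $1\in(0,t_{u,v}^-)$ I conclude $I(u,v)=F_{u,v}(1)\ge F_{u,v}(t_{u,v}^+)=I(\varphi(t_{u,v}^+,u),\varphi(t_{u,v}^+,v))\ge m_1$, where the final step uses $(\varphi(t_{u,v}^+,u),\varphi(t_{u,v}^+,v))\in\Omega^+(c_1,c_2)$ from Lemma \ref{lemma3}. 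Taking the infimum over $\mathcal A$ gives $\tilde m\ge m_1$, whence $m_1=\tilde m$.

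Finally, for $\tilde m\le m_2$ and the strictness claim, I would take any $(u,v)\in\Omega^-(c_1,c_2)$: from $f_{u,v}(1)=0$ and $g_{u,v}(1)<0$, Lemma \ref{lemma1}(iii) places $1$ beyond $\bar{t}_{u,v}$, so necessarily $1=t_{u,v}^-$, and then Lemma \ref{lemma1}(i) gives $I(u,v)=F_{u,v}(t_{u,v}^-)>F_{u,v}(t_{u,v}^+)\ge m_1=\tilde m$, the last inequality again by $(\varphi(t_{u,v}^+,u),\varphi(t_{u,v}^+,v))\in\Omega^+(c_1,c_2)$. Passing to the infimum yields $m_2\ge\tilde m$. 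If $m_2$ is attained at some $(u_0,v_0)\in\Omega^-(c_1,c_2)$, then the pointwise strict inequality applied to this minimizer gives $m_2=I(u_0,v_0)>\tilde m$, so the inequality is strict.

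The main obstacle will be the middle step: one cannot compare $I(u,v)$ with the $\Omega^+$-value directly, because the fiber $F_{u,v}(t)$ eventually tends to $-\infty$ (since $R(u,v)>0$ when $\mu_1,\mu_2,\beta>0$), so a priori $F_{u,v}(1)$ could lie on the descending branch past $t_{u,v}^-$ and fall below $m_1$. Ruling this out is exactly the content of $t_{u,v}^->1$, which is where the definition of $\mathcal A$ and Lemma \ref{lemma4} become indispensable. The strictness in the ``moreover'' part is comparatively soft but genuinely requires $m_2$ to be attained, since the pointwise gap $I>\tilde m$ on $\Omega^-$ need not survive passage to the infimum.
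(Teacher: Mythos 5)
Your proof is correct and follows essentially the same route as the paper: both arguments reduce everything to the fiber map $F_{u,v}$, use $\Omega^{+}(c_1,c_2)\subset\mathcal{A}(c_1,c_2)$ for $\tilde m\leqslant m_1$, compare $F_{u,v}(1)$ with $F_{u,v}(t_{u,v}^{+})$ for the reverse inequality, and obtain $m_1\leqslant m_2$ (strict when $m_2$ is attained) from $F_{u,v}(t_{u,v}^{+})<F_{u,v}(t_{u,v}^{-})=I(u,v)$ on $\Omega^{-}(c_1,c_2)$ via Lemmas \ref{lemma1} and \ref{lemma3}. The only difference is cosmetic: where the paper introduces the exit time $t_0$ of the fiber from $\mathcal{A}(c_1,c_2)$ and checks $t_{u,v}^{+}<t_0<t_{u,v}^{-}$ with $t_0>1$, you prove $t_{u,v}^{-}>1$ directly from Lemma \ref{lemma4} and the scaling identity \eqref{transform}, a slight streamlining of the same idea.
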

		\begin{proof}
			For any $\varepsilon>0$, let $(u,v)\in \Omega^{-}(c_1,c_2)$ satisfying $I(u,v)<m_2+\varepsilon$. From Lemma \ref{lemma1}(i) and Lemma \ref{lemma3} we know that $t_{u,v}^{+}<t_{u,v}^{-}=1$ and
			\begin{equation*}
				m_1\leqslant F_{u,v}(t_{u,v}^{+})<F_{u,v}(t_{u,v}^{-})=F_{u,v}(1)=I(u,v)<m_2+\varepsilon.
			\end{equation*}
			We have $m_1\leqslant m_2$ due to the arbitrariness of $\varepsilon$. If there exists a $(u,v)\in\Omega^{-}(c_1,c_2)$ such that $I(u,v)=m_2$, then $m_1<I(u,v)=m_2$.
			
			Next we prove $m_1=\tilde{m}$. According to Lemma \ref{lemma4}, it follows that $\Omega^{+}(c_1,c_2)\subset\mathcal{A}(c_1,c_2)$, which implies $\tilde{m}\leqslant m_1$. Fix $(u,v)\in \mathcal{A}(c_1,c_2)$. By increasing of $Q(\varphi(t,u))+Q(\varphi(t,v))$ with respect to $t$ and the definition of $\mathcal{A}(c_1,c_2)$, there exists $t_0>0$ such that  $\left(\varphi(t,u),\varphi(t,v)\right)\in\mathcal{A}(c_1,c_2)$ for every $t\in(0,t_0)$, while $\left(\varphi(t,u),\varphi(t,v)\right)\notin\mathcal{A}(c_1,c_2)$ for every $t\in(t_0,+\infty)$. Obviously, $t_0>1$. In virtue of Lemma \ref{lemma3} and Lemma \ref{lemma4}, one can check that $t_{u,v}^{+}<t_0<t_{u,v}^{-}$. By Lemma \ref{lemma1}(ii), we obtain that $F_{u,v}(t_{u,v}^{+})$ is the unique local minimum of $F_{u,v}(t)$ in $(0,t_0)$.  Hence, from Lemma \ref{lemma3}, it holds $I(u,v)=F_{u,v}(1)\geqslant F_{u,v}(t_{u,v}^{+})=I\left(\varphi(t_{u,v}^{+},u),\varphi(t_{u,v}^{+},v)\right)\geqslant m_1$. Therefore, we have $\tilde{m}\geqslant m_1$ due to the arbitrariness of $(u,v)$.
		\end{proof}
		
		\section{Existence result for the mass-supercritical case }\label{sec6}
		In this section, we prove the existence of a solution of local minimum type in $\Omega^{+}(c_1,c_2)$ and a  solution of minimax type in $\Omega^{-}(c_1,c_2)$, respectively.
		\subsection{Existence of a ground state solution}
		We establish a lower bound of $I$ on $\mathcal{A}(c_1,c_2)$. Recall that $\tilde{m}=\inf_{\mathcal{A}(c_1,c_2)}I(u,v)$ in \eqref{minimumeneergy}.
		\begin{lemma}\label{finite of m}
			$\tilde{m}>-\infty$.
		\end{lemma}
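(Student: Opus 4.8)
The plan is to isolate the single mechanism by which $I$ could be unbounded below on $\mathcal A(c_1,c_2)$, namely the mass-supercritical term $-\frac{1}{2p}R(u,v)$, and to show that the gradient cap built into the definition of $\mathcal A(c_1,c_2)$ neutralizes it. First I would exploit the sign of the logarithmic kernel: since $\log(1+|x-y|)\ge 0$, the functional $W_1(u,v)$ is nonnegative, so from the decomposition $W_0=W_1-W_2$ we get $W_0(u,v)\ge -W_2(u,v)$. This replaces the genuinely sign-indefinite convolution term by $-W_2$, which is controllable from above through Lemma \ref{lemmathree}(i).

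Next I would invoke the defining constraint of $\mathcal A(c_1,c_2)$. For $(u,v)\in\mathcal A(c_1,c_2)$ one has $Q(u)+Q(v)<D$, where $D:=\frac{p-1}{p-2}\frac{(c_1+c_2)^2}{4}$ is a constant depending only on $p,c_1,c_2$; in particular $Q(u),Q(v)\le D$. Substituting these bounds into Lemma \ref{lemmathree}(i) bounds $W_2(u,v)$ by a constant, and substituting them into the chain \eqref{fomulaR} (which itself follows from Lemma \ref{lemmathree}(ii)--(iii)) gives $R(u,v)\le K_{2p}\mu_0\big(Q(u)+Q(v)\big)^{p-1}(c_1+c_2)\le K_{2p}\mu_0 D^{p-1}(c_1+c_2)$, again a constant. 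Assembling these estimates, for every $(u,v)\in\mathcal A(c_1,c_2)$,
\begin{equation*}
I(u,v)\ge \frac12\big(Q(u)+Q(v)\big)-\frac14 W_2(u,v)-\frac{1}{2p}R(u,v)\ge -\frac{C}{4}\big(c_1^{3/4}D^{1/4}+c_2^{3/4}D^{1/4}\big)^2-\frac{K_{2p}\mu_0}{2p}D^{p-1}(c_1+c_2).
\end{equation*}
Dropping the nonnegative kinetic term, the right-hand side is a finite constant independent of $(u,v)$, so $\tilde m\ge -K>-\infty$.

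There is no serious obstacle here; the only point worth emphasizing is \emph{why} the statement holds on $\mathcal A(c_1,c_2)$ rather than on all of $S(c_1)\times S(c_2)$. In the supercritical regime $p>2$, along sequences with $Q(u)+Q(v)\to\infty$ the term $-\frac{1}{2p}R$ runs to $-\infty$ faster than $\frac12(Q(u)+Q(v))$ grows, so $I$ is \emph{not} bounded below on the full sphere; it is precisely the a priori gradient bound $Q(u)+Q(v)<D$ encoded in $\mathcal A(c_1,c_2)$ that keeps $R$ bounded and yields the finite lower bound. Thus no compactness or concentration argument is needed for this lemma—only the nonnegativity of $W_1$, the estimates of Lemma \ref{lemmathree}, and the constraint defining $\mathcal A(c_1,c_2)$.
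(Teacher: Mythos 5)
Your proof is correct and follows essentially the same route as the paper's: both lower-bound $I$ by $\frac12\left(Q(u)+Q(v)\right)-\frac14 W_2(u,v)-\frac{1}{2p}R(u,v)$ using the nonnegativity of $W_1$, then control $W_2$ via Lemma \ref{lemmathree}(i) and $R$ via \eqref{fomulaR}, and finally invoke the gradient bound defining $\mathcal{A}(c_1,c_2)$. The only cosmetic difference is that you substitute the cap $Q(u)+Q(v)<\frac{p-1}{p-2}\frac{(c_1+c_2)^2}{4}$ explicitly into each term to produce numerical constants, whereas the paper leaves the lower bound as a function of $Q(u),Q(v)$ and cites the definition of $\mathcal{A}(c_1,c_2)$ at the last step.
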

		\begin{proof}
			From \eqref{fomulaR} it holds that 
			\begin{equation*}
				\begin{aligned}
					I(u,v)&\geqslant\frac{1}{2}\left(Q(u)+Q(v)\right)-\frac{1}{4}W_2(u,v)-\frac{1}{2p}R(u,v)\\
					&\geqslant\frac{1}{2}\left(Q(u)+Q(v)\right)-C\left(c_{1}^{\frac{3}{4}}Q(u)^{\frac{1}{4}}+c_{2}^{\frac{3}{4}}Q(v)^{\frac{1}{4}}\right)^{2}-K_{2p}\mu_0\left(Q(u)+Q(v)\right)^{p-1}(c_1+c_2).
				\end{aligned}
			\end{equation*}
			According to the definition of $\mathcal{A}(c_1,c_2)$, it follows from above inequality that $\tilde{m}>-\infty$.
		\end{proof}
		
		\begin{lemma}\label{minimizermathcalA}
			There exists a  $(u^{+},v^{+})\in\mathcal{A}(c_1,c_2)$ such that $I(u^{+},v^{+})=\tilde{m}$.
		\end{lemma}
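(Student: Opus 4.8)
The plan is to extract a minimizing sequence, recover compactness through the translation trick of Lemma \ref{weak convergent sequence}, and then use the fiber-map analysis of Section \ref{sec4} to force the weak limit into the \emph{open} set $\mathcal{A}(c_1,c_2)$.

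First I would fix a minimizing sequence $\{(u_n,v_n)\}\subset\mathcal{A}(c_1,c_2)$ with $I(u_n,v_n)\to\tilde m$, finite by Lemma \ref{finite of m}. By the very definition of $\mathcal{A}(c_1,c_2)$ the quantities $Q(u_n)+Q(v_n)$ are uniformly bounded, so $\{(u_n,v_n)\}$ is bounded in $H^1(\R^2)\times H^1(\R^2)$; together with Lemma \ref{lemmathree} this makes $\{W_2\},\{P\},\{P_0\}$ bounded, and solving the identity for $W_1$ exactly as in the proof of Theorem \ref{thm1} shows $\{W_1(u_n,v_n)\}$ is bounded too. I would then invoke Lemma \ref{weak convergent sequence} to produce translations $x_n\in\R^2$ with $(\tilde u_n,\tilde v_n):=(u_n(\cdot-x_n),v_n(\cdot-x_n))\rightharpoonup(u,v)$ in $E\times E$ and $(u,v)\in S(c_1)\times S(c_2)$. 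Since $Q$, $W_0$ and $R$ are invariant under a common translation, $I(\tilde u_n,\tilde v_n)=I(u_n,v_n)\to\tilde m$ and $(\tilde u_n,\tilde v_n)\in\mathcal{A}(c_1,c_2)$.

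Next I would pass to the limit. Writing $I=\frac12(Q(u)+Q(v))+\frac14W_1-\frac14W_2-\frac1{2p}R$, the weak lower semicontinuity of $Q$ and of $W_1$ (Lemma \ref{lemmaone}(iii)) bounds the first two terms from below by their $\liminf$, while $W_2$ is weakly continuous on $E\times E$ (Lemma \ref{lemmaone}(iv)) and $R=\mu_1P(u)+\mu_2P(v)+2\beta P_0(u,v)$ converges by Lemma \ref{lemmaone}(i) and Lemma \ref{lemmafour}. Combining these, and using that $\liminf a_n+\liminf b_n\le\liminf(a_n+b_n)$ for the two semicontinuous terms, yields $I(u,v)\le\liminf_n I(\tilde u_n,\tilde v_n)=\tilde m$.

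It remains to check $(u,v)\in\mathcal{A}(c_1,c_2)$, and this is the delicate point, since weak lower semicontinuity alone only gives $Q(u)+Q(v)\le\frac{p-1}{p-2}\frac{(c_1+c_2)^2}{4}$ and I must rule out equality. Suppose for contradiction that $Q(u)+Q(v)=\frac{p-1}{p-2}\frac{(c_1+c_2)^2}{4}$. Applying the fiber map to $(u,v)\in S(c_1)\times S(c_2)$, Lemma \ref{lemma1} supplies $t_{u,v}^{+}<t_{u,v}^{-}$, and by Lemma \ref{lemma3} one has $(\varphi(t_{u,v}^{\pm},u),\varphi(t_{u,v}^{\pm},v))\in\Omega^{\pm}(c_1,c_2)$; since $Q(\varphi(t,u))+Q(\varphi(t,v))=t^2(Q(u)+Q(v))$, Lemma \ref{lemma4} forces $t_{u,v}^{+}<1<t_{u,v}^{-}$. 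As $f_{u,v}>0$ on $(t_{u,v}^{+},t_{u,v}^{-})$ by Lemma \ref{lemma1}(ii), $F_{u,v}$ is strictly increasing there, whence
\[
I\!\left(\varphi(t_{u,v}^{+},u),\varphi(t_{u,v}^{+},v)\right)=F_{u,v}(t_{u,v}^{+})<F_{u,v}(1)=I(u,v)\le\tilde m.
\]
But $(\varphi(t_{u,v}^{+},u),\varphi(t_{u,v}^{+},v))\in\Omega^{+}(c_1,c_2)\subset\mathcal{A}(c_1,c_2)$ by Lemma \ref{lemma4}, so its energy is at least $\inf_{\mathcal{A}(c_1,c_2)}I=\tilde m$ (consistently with $m_1=\tilde m$ in Lemma \ref{lemma5}), a contradiction. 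Hence $Q(u)+Q(v)<\frac{p-1}{p-2}\frac{(c_1+c_2)^2}{4}$, i.e. $(u,v)\in\mathcal{A}(c_1,c_2)$, and then $\tilde m\le I(u,v)\le\tilde m$ gives $I(u,v)=\tilde m$; setting $(u^{+},v^{+}):=(u,v)$ finishes the proof. The main obstacle is precisely this last step—keeping the weak limit inside the \emph{open} constraint set—which the fiber structure of $F_{u,v}$ resolves.
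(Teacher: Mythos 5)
Your proof is correct, and its first half --- minimizing sequence, boundedness of $\{W_1(u_n,v_n)\}$, translation via Lemma \ref{weak convergent sequence}, then passage to the limit using weak lower semicontinuity of $Q$ and $W_1$ together with weak continuity of $W_2$ and convergence of $R$ --- is exactly the argument the paper invokes: its own proof of this lemma is a single line deferring to the proof of Theorem \ref{thm1}. Where you genuinely go beyond the paper is the second half. The set $\mathcal{A}(c_1,c_2)$ is defined by a \emph{strict} inequality, so weak lower semicontinuity of $Q$ only places the limit $(u,v)$ in the closure of $\mathcal{A}(c_1,c_2)$; if $Q(u)+Q(v)=\frac{p-1}{p-2}\frac{(c_1+c_2)^2}{4}$ were possible, one could not invoke $I(u,v)\geqslant\tilde m$, and the chain $\tilde m\leqslant I(u,v)\leqslant\tilde m$ used implicitly by the paper would break down. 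Your contradiction argument closes precisely this gap, and it is sound: on the boundary, Lemmas \ref{lemma1}, \ref{lemma3} and \ref{lemma4} combined with the scaling $Q(\varphi(t,u))+Q(\varphi(t,v))=t^2\left(Q(u)+Q(v)\right)$ force $t_{u,v}^{+}<1<t_{u,v}^{-}$, so strict monotonicity of $F_{u,v}$ on $(t_{u,v}^{+},t_{u,v}^{-})$ gives $I\left(\varphi(t_{u,v}^{+},u),\varphi(t_{u,v}^{+},v)\right)<I(u,v)\leqslant\tilde m$, while this rescaled pair lies in $\Omega^{+}(c_1,c_2)\subset\mathcal{A}(c_1,c_2)$ and hence has energy at least $\tilde m$. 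In short, your proof follows the paper's compactness route but supplies the boundary-exclusion step that the paper's terse proof omits, making it a more complete argument for the same statement.
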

		\begin{proof}
			Let $\left\{(u_n,v_n)\right\}\subset\mathcal{A}(c_1,c_2)$ be a minimizing sequence  of $I$. Since $\{Q(u_n)+Q(v_n)\}$ is bounded, a similar argument as in the proof of Theorem \ref{thm1} implies conclusion. 
		\end{proof}
		
		\begin{proof}[Proof of Theorem \ref{thm2}(i)]
			In Lemma \ref{minimizermathcalA}, we obtain a local minimizer $(u^{+},v^{+})\in\mathcal{A}(c_1,c_2)$ and it is truly a critical point of $I$ constrained on $S(c_1)\times S(c_2)$. It remains to show that it is a ground state solution. According to Lemma \ref{lemma2} and Lemma \ref{lemma4}, we get that $(u^{+},v^{+})\in\Omega^{+}(c_1,c_2)$ and $\mathcal{K}(c_1,c_2)\subset\Omega(c_1,c_2)$. Then we have  $\inf_{\Omega(c_1,c_2)}I\leqslant\inf_{\mathcal{K}(c_1,c_2)}I$. Moreover, it follows from Lemma \ref{lemma3} and Lemma \ref{lemma5} that $\tilde{m}=m_1=\inf_{\Omega(c_1,c_2)}I$. Then Lemma \ref{minimizermathcalA} leads to the following:
			\begin{equation*}
				\inf_{\mathcal{K}(c_1,c_2)}I\leqslant I(u^{+},v^{+})=\tilde{m}=m_1=\inf_{\Omega(c_1,c_2)}I\leqslant\inf_{\mathcal{K}(c_1,c_2)}I,
			\end{equation*}
			which implies that $I(u^{+},v^{+})=\inf_{\mathcal{K}(c_1,c_2)}I$.
		\end{proof}

		\subsection{Existence of an excited state solution}
		We introduce the definition of homotopy-stable family, which can be found in \cite[Definition 3.1]{Ghou93b}.
		\begin{definition}\label{GhoussobDefinition}
			Let $B$ be a closed subset of topological space $X$. We shall say that a class $\mathcal{F}$ of compact subsets of $X$ is a homotopy-stable family with boundary $B$ if:
			\begin{enumerate}[{\rm(i)}]
				\item every set in $\mathcal{F}$ contains $B$;
				\item for any set $A$ in $\mathcal{F}$ and any $\eta\in C([0,1]\times X;X)$ satisfying $\eta(t,x)=x$ for all $(t,x)$ in $\left(\left\{0\right\}\times X\right)\cup\left([0,1]\times B\right)$ we have that $\eta(\left\{1\right\}\times A)\in\mathcal{F}$.
			\end{enumerate}
		\end{definition}
		We note that $B=\varnothing$ is admissible in above definition. 
		\begin{lemma}[\cite{Ghou93b}, Theorem 3.2]\label{Minimaxthm}
			Let $J$ be a $C^1$-functional on a complete connected $C^1$-Finsler manifold $X$(without boundary) and consider a homotopy-stable family $\mathcal{F}$ with a closed boundary $B$. Assume
			\begin{equation*}
				c=c(J,\mathcal{F})=\inf_{A\in\mathcal{F}}\max_{x\in A}J(x)
			\end{equation*}
			is finite and suppose that 
			\begin{equation}\label{minimaxF1}
				\sup_{B}J<c.
			\end{equation}
			Then, for any sequence of sets $\left\{A_n\right\}$ in $\mathcal{F}$ such that $\lim_{n\to\infty}\sup_{A_n}J=c$, there exists a sequence $\left\{x_n\right\}$ in $X$ such that 
			\begin{enumerate}[{\rm(i)}]
				\item $\lim_{n\to\infty}J(x_n)=c$;
				\item $\lim_{n\to\infty}dJ(x_n)=0$;
				\item $\lim_{n\to\infty}\text{\rm{dist}}(x_n,A_n)=0$.
			\end{enumerate}
		\end{lemma}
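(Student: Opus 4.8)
The result is the classical minimax principle of Ghoussoub, and the plan is to prove it by contradiction through a quantitative deformation along a pseudo-gradient flow. Assume the conclusion fails for some minimizing family $\{A_n\}$, i.e.\ no sequence satisfies (i)--(iii) simultaneously. After passing to a subsequence and relabelling, I would recast this as the existence of $\varepsilon\in(0,\tfrac13(c-\sup_B J))$ and $\delta>0$ such that, for every large $n$,
\[
\|dJ(x)\|\ge\frac{8\varepsilon}{\delta}\quad\text{whenever } |J(x)-c|\le 2\varepsilon \ \text{ and } \ \mathrm{dist}(x,A_n)\le 2\delta .
\]
Since $\sup_{A_n}J\to c$ we may also assume $\sup_{A_n}J\le c+\varepsilon$, and the choice of $\varepsilon$ forces $\sup_B J<c-2\varepsilon$.

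On the open region $\mathcal{O}_n=\{x:|J(x)-c|<2\varepsilon,\ \mathrm{dist}(x,A_n)<2\delta\}$ the differential never vanishes, so $J$ admits a locally Lipschitz pseudo-gradient field $V$ there which I normalize to $\|V(x)\|\le 1$ with $\langle dJ(x),V(x)\rangle\ge\tfrac12\|dJ(x)\|$. First I would build two locally Lipschitz cut-offs: a function of the energy, supported in $\{|J-c|\le 2\varepsilon\}$ and equal to $1$ on $\{|J-c|\le\varepsilon\}$, and a function of the distance to $A_n$, supported in $\{\mathrm{dist}(\cdot,A_n)\le 2\delta\}$ and equal to $1$ on $\{\mathrm{dist}(\cdot,A_n)\le\delta\}$; their product $\chi$ localizes the field. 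Because $X$ is a complete Finsler manifold and $\|\chi V\|\le 1$, the flow $\dot\sigma=-\chi(\sigma)V(\sigma)$, $\sigma(0,x)=x$, is globally defined, and after rescaling time I obtain $\eta\in C([0,1]\times X;X)$ with $\eta(0,\cdot)=\mathrm{id}$ and $\mathrm{dist}(\eta(t,x),x)\le\delta$ for all $(t,x)$.

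Two facts then close the argument. First, $\eta$ is admissible for the homotopy-stable family: since $\chi$ vanishes on $\{J\le c-2\varepsilon\}\supset B$, each $b\in B$ is an equilibrium of the flow, so $\eta(t,b)=b$; together with $\eta(0,\cdot)=\mathrm{id}$ this gives $\eta(t,x)=x$ on $(\{0\}\times X)\cup([0,1]\times B)$, and Definition \ref{GhoussobDefinition} yields $\eta(1,A_n)\in\mathcal{F}$. Second, the deformation lowers the energy on $A_n$: for $x\in A_n$ with $J(x)\ge c-\varepsilon$, the orbit moves by at most $\delta$ (hence stays in $\mathcal{O}_n$, where the gradient bound holds) and where $\chi=1$ one has $\frac{d}{ds}J(\sigma)\le-\tfrac12\|dJ\|\le-4\varepsilon/\delta$, so the available running time forces a decrease of at least $2\varepsilon$; as $J$ is non-increasing along the flow it can never return above $c-\varepsilon$. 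Thus $\max_{\eta(1,A_n)}J\le c-\varepsilon$, contradicting $c=\inf_{A\in\mathcal{F}}\max_A J\le\max_{\eta(1,A_n)}J$.

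I expect the main obstacle to be the coupled bookkeeping between the energy scale $\varepsilon$ and the spatial scale $\delta$. The gradient lower bound must be taken precisely in the form $\|dJ\|\ge 8\varepsilon/\delta$ so that a flow permitted to move points by only $\delta$ still pushes $\{J\le c+\varepsilon\}\cap A_n$ below $c-\varepsilon$ while no trajectory escapes the region $\mathcal{O}_n$ on which that bound is valid. Extracting this quantitative negation from the plain failure of (i)--(iii), and checking that trajectories starting where $\chi=1$ remain inside $\mathcal{O}_n$ until the energy has dropped, is the delicate part; once it is in place, the cut-off near $B$ (forced by $\sup_B J<c$) together with the homotopy-stability axiom makes the contradiction immediate.
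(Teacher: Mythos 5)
The paper offers no proof of this lemma at all: it is quoted as a black box from Ghoussoub \cite{Ghou93b}, Theorem 3.2, so the only meaningful comparison is with the argument in that cited source, and your sketch is a correct reconstruction of exactly that standard quantitative-deformation proof. Your bookkeeping is consistent — negating (i)--(iii) along a subsequence yields the bound $\|dJ\|\geqslant 8\varepsilon/\delta$ on the $(2\varepsilon,2\delta)$-region around $A_n$, the choice $\varepsilon<\tfrac13\bigl(c-\sup_B J\bigr)$ makes the cut-off vanish on $B$ so that the rescaled flow is admissible for Definition \ref{GhoussobDefinition}, a unit-speed flow run for time $\delta$ at decrease rate $4\varepsilon/\delta$ drops the level by $4\varepsilon>2\varepsilon$ while trajectories stay within $\delta$ of $A_n$, and $\max_{\eta(1,A_n)}J\leqslant c-\varepsilon$ then contradicts $c=\inf_{A\in\mathcal{F}}\max_{A}J$.
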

		
		Let us introduce the following auxiliary functional:
		\begin{equation*}
			I^{-}(u,v):=I\left(\varphi(t_{u,v}^{-},u),\varphi(t_{u,v}^{-},v)\right)
		\end{equation*}
		for every $(u,v)\in S(c_1)\times S(c_2)$. We denote $T_{(u,v)}(S(c_1)\times S(c_2))$ as the tangent space of $S(c_1)\times S(c_2)$ at $(u,v)$.
		\begin{lemma}\label{lemme1}
			The map 
			$$S(c_1)\times S(c_2)\to\R,\quad(u,v)\mapsto t_{u,v}^{-}$$
			is of $C^1$ class.
		\end{lemma}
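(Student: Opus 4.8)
The plan is to apply the Implicit Function Theorem to the equation $f_{u,v}(t)=0$, viewing $t_{u,v}^{-}$ as an implicitly defined solution branch. First I would introduce the auxiliary function
\[
\Phi:\big(S(c_1)\times S(c_2)\big)\times(0,+\infty)\to\R,\qquad \Phi\big((u,v),t\big):=f_{u,v}(t),
\]
where $f_{u,v}$ is given in \eqref{fuv(t)}. Writing out the explicit form
\[
\Phi\big((u,v),t\big)=t\big(Q(u)+Q(v)\big)-\frac{(c_1+c_2)^2}{4t}-\frac{p-1}{p}\,t^{2p-3}R(u,v),
\]
the $t$-dependence is manifestly smooth, while the dependence on $(u,v)$ enters only through the functionals $Q(u)+Q(v)$ and $R(u,v)$. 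By Lemma \ref{lemmaone}(ii) the functional $R$ is of class $C^1$ on $E\times E$, and $Q$ is a continuous quadratic form, hence also $C^1$; therefore $\Phi$ is jointly $C^1$ on the product $C^1$-manifold $\big(S(c_1)\times S(c_2)\big)\times(0,+\infty)$.

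Next I would verify the nondegeneracy condition required by the Implicit Function Theorem. Fix $(u_0,v_0)\in S(c_1)\times S(c_2)$ and set $t_0:=t_{u_0,v_0}^{-}$. By construction $\Phi\big((u_0,v_0),t_0\big)=f_{u_0,v_0}(t_0)=0$, and the partial derivative in $t$ satisfies
\[
\partial_t\Phi\big((u_0,v_0),t_0\big)=f_{u_0,v_0}'(t_0)=\frac{g_{u_0,v_0}(t_0)}{t_0^{2}},
\]
by the definition \eqref{guv(t)} of $g_{u,v}$. Since $t_0=t_{u_0,v_0}^{-}>\bar t_{u_0,v_0}$ by Lemma \ref{lemma1}, part (iii) of that lemma gives $g_{u_0,v_0}(t_0)<0$, so $\partial_t\Phi\big((u_0,v_0),t_0\big)<0$ is nonzero. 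The Implicit Function Theorem on Banach manifolds then yields a neighbourhood $\mathcal{U}$ of $(u_0,v_0)$ in $S(c_1)\times S(c_2)$ and a unique $C^1$ map $\mathcal{T}:\mathcal{U}\to(0,+\infty)$ with $\mathcal{T}(u_0,v_0)=t_0$ and $\Phi\big((u,v),\mathcal{T}(u,v)\big)=0$ for all $(u,v)\in\mathcal{U}$.

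Finally I would identify this local branch with the global map $(u,v)\mapsto t_{u,v}^{-}$. By Lemma \ref{lemma1}, for each $(u,v)$ the equation $f_{u,v}(t)=0$ has exactly two roots $t_{u,v}^{+}<t_{u,v}^{-}$, distinguished by the sign of $g_{u,v}$ at the root (negative at $t_{u,v}^{-}$ by part (iii), positive at $t_{u,v}^{+}$). Shrinking $\mathcal{U}$ if necessary, so that the continuous quantity $g_{u,v}(\mathcal{T}(u,v))$ remains negative by continuity, the root $\mathcal{T}(u,v)$ must coincide with $t_{u,v}^{-}$. Hence $(u,v)\mapsto t_{u,v}^{-}$ agrees with the $C^1$ map $\mathcal{T}$ on $\mathcal{U}$, and since $(u_0,v_0)$ was arbitrary, the map is $C^1$ on all of $S(c_1)\times S(c_2)$. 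The main obstacle I anticipate is the bookkeeping of the $C^1$ structure on the constraint manifold—specifically confirming that the tangential differential of $\Phi$ in the $(u,v)$ variables is continuous in the $E\times E$ topology—but this reduces entirely to the $C^1$ regularity of $Q$ and $R$ already supplied by Lemma \ref{lemmaone}.
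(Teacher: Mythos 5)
Your proposal is correct and follows essentially the same route as the paper: the paper also defines $\Psi(t,u,v):=f_{u,v}(t)$, notes $\Psi(t_{u,v}^{-},u,v)=0$ with $\partial_t\Psi(t_{u,v}^{-},u,v)=g_{u,v}(t_{u,v}^{-})/(t_{u,v}^{-})^2<0$ by Lemma \ref{lemma1}, and concludes by the implicit function theorem. Your additional care in verifying the $C^1$ regularity of $\Psi$ via Lemma \ref{lemmaone}(ii) and in identifying the local implicit branch with the global map $(u,v)\mapsto t_{u,v}^{-}$ (using the sign of $g$ to exclude the root $t_{u,v}^{+}$) simply fills in details the paper leaves implicit.
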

		\begin{proof}
			Define functional $\Psi:(0,+\infty)\times S(c_1)\times S(c_2)\to\R$ by $\Psi(t,u,v):=f_{u,v}(t)$, where $f_{u,v}$ comes from \eqref{fuv(t)}. It is easy to check that $\Psi$ is of $C^1$ class. By Lemma \ref{lemma1}, we have  $\Psi(t_{u,v}^{-},u,v)=0$ and $\partial_{t}\Psi(t_{u,v}^{-},u,v)=g_{u,v}(t_{u,v}^{-})/(t_{u,v}^{-})^2<0$, where $g_{u,v}$ is defined as \eqref{guv(t)}. By implicit function theorem we obtain the conclusion.
		\end{proof}

		\begin{lemma}\label{lemme2}
			For any $(u,v)\in S(c_1)\times S(c_2)$, the map $$T_{(u,v)}(S(c_1)\times S(c_2))\to T_{(\varphi(t_{u,v}^{-},u),\varphi(t_{u,v}^{-},v))}(S(c_1)\times S(c_2)),\quad(\phi,\psi)\mapsto(\varphi(t_{u,v}^{-},\phi),\varphi(t_{u,v}^{-},\psi))$$
			is isomorphism.
		\end{lemma}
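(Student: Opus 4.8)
The plan is to observe that once $(u,v)$ is fixed, the number $t:=t_{u,v}^{-}$ is a positive constant, so the map under consideration is merely the restriction of the \emph{linear} operator $L_t(\phi,\psi):=(\varphi(t,\phi),\varphi(t,\psi))$ to a tangent space. I would begin by recording the standard description of the tangent spaces: since $S(c)=\{w\in E:\lVert w\rVert_2^2=c\}$ is a regular level set of $w\mapsto\lVert w\rVert_2^2$, whose differential at $w$ is $2\int_{\R^2}w\,\cdot\,dx$, one has
\[
T_{(u,v)}(S(c_1)\times S(c_2))=\Big\{(\phi,\psi)\in E\times E:\ \int_{\R^2}u\phi\,dx=0,\ \int_{\R^2}v\psi\,dx=0\Big\},
\]
and an analogous description holds at the scaled point $(\varphi(t_{u,v}^{-},u),\varphi(t_{u,v}^{-},v))$.

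The first substantive step is well-definedness, i.e. that $L_t$ sends the source tangent space into the target one. This rests on the fact that $\varphi(t,\cdot)$ preserves the $L^2$ pairing: with the substitution $y=tx$,
\[
\int_{\R^2}\varphi(t,u)\,\varphi(t,\phi)\,dx=t^2\int_{\R^2}u(tx)\phi(tx)\,dx=\int_{\R^2}u(y)\phi(y)\,dy.
\]
Hence $\int_{\R^2}u\phi\,dx=0$ forces $\int_{\R^2}\varphi(t,u)\varphi(t,\phi)\,dx=0$, and similarly for the second component, so $L_t$ indeed maps $T_{(u,v)}$ into $T_{(\varphi(t,u),\varphi(t,v))}$.

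Next I would verify that $L_t:E\times E\to E\times E$ is bounded by controlling each piece of the $E$-norm. The $L^2$-norm is preserved by the computation above, and by the scaling identities already recorded in \eqref{transform} one has $Q(\varphi(t,u))=t^2Q(u)$. The only delicate term is the logarithmic weight: substituting $y=tx$ gives $\lVert\varphi(t,u)\rVert_0^2=\int_{\R^2}\log(1+|y|/t)u^2(y)\,dy$, which is dominated by $\lVert u\rVert_0^2$ when $t\geqslant1$, while for $0<t<1$ the elementary inequality $\log(1+|y|/t)\leqslant\log(1/t)+\log(1+|y|)$ yields $\lVert\varphi(t,u)\rVert_0^2\leqslant\log(1/t)\lVert u\rVert_2^2+\lVert u\rVert_0^2$. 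In either case $\lVert\varphi(t,u)\rVert_E\leqslant C(t)\lVert u\rVert_E$, so $L_t$ is bounded; this control of the log-weight under scaling is the only genuine obstacle in the argument.

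Finally, for invertibility I would use the semigroup law $\varphi(s,\varphi(t,u))=\varphi(st,u)$ together with $\varphi(1,u)=u$ to conclude that $L_{1/t}$ is a two-sided inverse of $L_t$; it is bounded by the same estimate applied to $1/t$, and since it also preserves the $L^2$ pairing it carries the target tangent space back into the source one. Consequently $L_t$ restricts to a continuous linear bijection with continuous inverse between the two tangent spaces, that is, an isomorphism.
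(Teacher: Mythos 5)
Your proof is correct and takes essentially the same approach as the paper: the well-definedness step (invariance of the $L^2$ pairing under $\varphi(t,\cdot)$, computed by the substitution $y=tx$) is identical, after which the paper simply notes linearity and cites \cite[Lemma 3.6]{BS2019} for the remainder. What you add is an explicit verification of that ``standard'' remainder — boundedness of $L_t$ on $E\times E$, including the scaling estimate $\lVert\varphi(t,u)\rVert_0^2\leqslant\max\{0,\log(1/t)\}\lVert u\rVert_2^2+\lVert u\rVert_0^2$ for the logarithmic weight, and invertibility via the semigroup law $\varphi(s,\varphi(t,\cdot))=\varphi(st,\cdot)$ — which is a worthwhile completion, since the cited lemma is formulated in an $H^1$ setting where the log-weight term of the $E$-norm does not appear.
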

		\begin{proof}
			For any $(\phi,\psi)\in T_{(u,v)}(S(c_1)\times S(c_2))$ and $t\in(0,+\infty)$, we have 
			\begin{equation*}
				\int_{\R^2}\varphi(t,u)(x)\varphi(t,\phi)(x)dx=\int_{\R^2}t^2u(tx)\phi(tx)dx=\int_{\R^2}u(x)\phi(x)dx=0,
			\end{equation*}
			and
			\begin{equation*}
				\int_{\R^2}\varphi(t,v)(x)\varphi(t,\psi)(x)dx=\int_{\R^2}t^2v(tx)\psi(tx)dx=\int_{\R^2}v(x)\psi(x)dx=0.
			\end{equation*}
			Then $(\varphi(t_{u,v}^{-},\phi),\varphi(t_{u,v}^{-},\psi))\in T_{(\varphi(t_{u,v}^{-},u),\varphi(t_{u,v}^{-},v))}(S(c_1)\times S(c_2))$ and the map is well-defined. Clearly it is linear and the remained proof is standard (see \cite[Lemma 3.6]{BS2019}). 
		\end{proof}
		
		\begin{lemma}\label{lemme3}
			$I^{-}$ is of $C^1$ class and 
			\begin{equation*}
				\langle dI^{-}|_{S(c_1)\times S(c_2)}(u,v),(\phi,\psi)\rangle=\langle dI|_{S(c_1)\times S(c_2)}(\varphi(t_{u,v}^{-},u),\varphi(t_{u,v}^{-},v)),(\varphi(t_{u,v}^{-},\phi),\varphi(t_{u,v}^{-},\psi))\rangle
			\end{equation*}
			for any $(u,v)\in S(c_1)\times S(c_2)$ and $(\phi,\psi)\in T_{(u,v)}(S(c_1)\times S(c_2))$.
		\end{lemma}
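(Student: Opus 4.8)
The plan is to realize $I^{-}$ as a composition of $C^1$ maps and then differentiate via the chain rule, using crucially that $t_{u,v}^{-}$ is a critical point of the fiber map $F_{u,v}$ in order to annihilate the term coming from the $t$-dependence. First I would introduce the auxiliary functional
\begin{equation*}
	\Phi:(0,+\infty)\times\left(S(c_1)\times S(c_2)\right)\to\R,\qquad\Phi(t,u,v):=I\left(\varphi(t,u),\varphi(t,v)\right)=F_{u,v}(t).
\end{equation*}
By the scaling identities \eqref{transform} together with \eqref{Function of t}, the dependence of $\Phi$ on $t$ is explicitly smooth (it involves only $t^2$, $t^{2p-2}$ and $\log t$), while the dependence on $(u,v)$ is of class $C^1$ because $I\in C^1(E\times E,\R)$ by Lemma \ref{lemmaone}(ii) and $\phi\mapsto\varphi(t,\phi)$ is a bounded linear map on $E$ for each fixed $t$. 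Hence $\Phi$ is of class $C^1$. Since the map $(u,v)\mapsto t_{u,v}^{-}$ is of class $C^1$ by Lemma \ref{lemme1} and $I^{-}(u,v)=\Phi(t_{u,v}^{-},u,v)$, the composition $I^{-}$ is of class $C^1$.

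Next I would compute the derivative. For $(\phi,\psi)\in T_{(u,v)}(S(c_1)\times S(c_2))$ the chain rule gives, after substituting $\partial_t\Phi(t,u,v)=F_{u,v}'(t)=f_{u,v}(t)$ from \eqref{fuv(t)},
\begin{equation*}
	\langle dI^{-}(u,v),(\phi,\psi)\rangle=f_{u,v}(t_{u,v}^{-})\,\langle dt_{u,v}^{-},(\phi,\psi)\rangle+\langle\partial_{(u,v)}\Phi(t_{u,v}^{-},u,v),(\phi,\psi)\rangle.
\end{equation*}
The crucial point is that the first term vanishes, because $t_{u,v}^{-}$ is a critical point of $F_{u,v}$ and so $f_{u,v}(t_{u,v}^{-})=0$ by Lemma \ref{lemma1}. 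For the second term, the linearity of $\varphi(t,\cdot)$ yields $\partial_u\varphi(t,u)[\phi]=\varphi(t,\phi)$ and $\partial_v\varphi(t,v)[\psi]=\varphi(t,\psi)$, whence
\begin{equation*}
	\langle\partial_{(u,v)}\Phi(t_{u,v}^{-},u,v),(\phi,\psi)\rangle=\langle dI\left(\varphi(t_{u,v}^{-},u),\varphi(t_{u,v}^{-},v)\right),\left(\varphi(t_{u,v}^{-},\phi),\varphi(t_{u,v}^{-},\psi)\right)\rangle.
\end{equation*}

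Finally I would pass from the free derivative to the constrained one. For $(\phi,\psi)\in T_{(u,v)}(S(c_1)\times S(c_2))$, Lemma \ref{lemme2} guarantees that $\left(\varphi(t_{u,v}^{-},\phi),\varphi(t_{u,v}^{-},\psi)\right)$ lies in $T_{(\varphi(t_{u,v}^{-},u),\varphi(t_{u,v}^{-},v))}(S(c_1)\times S(c_2))$, so the pairing above coincides with $\langle dI|_{S(c_1)\times S(c_2)}(\varphi(t_{u,v}^{-},u),\varphi(t_{u,v}^{-},v)),(\varphi(t_{u,v}^{-},\phi),\varphi(t_{u,v}^{-},\psi))\rangle$, which is exactly the claimed identity. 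I expect the main technical obstacle to be the rigorous verification that $\Phi$ — equivalently the scaling map $(u,v)\mapsto(\varphi(t_{u,v}^{-},u),\varphi(t_{u,v}^{-},v))$ — is genuinely $C^1$ as a map into $E\times E$, in particular the continuity of the $E$-valued partial derivatives in the presence of the logarithmic weight; by contrast, the cancellation of the $dt_{u,v}^{-}$-term, which is the conceptual heart of the argument, is immediate from $f_{u,v}(t_{u,v}^{-})=0$.
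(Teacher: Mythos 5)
Your argument is correct in substance, but it is not the paper's proof. The paper (following \cite[Lemma 3.15]{CJ19}) never invokes joint differentiability of $\Phi$ nor the chain rule: it takes a $C^1$ curve $\gamma(\varepsilon)=(u_\varepsilon,v_\varepsilon)$ through $(u,v)$ with $\gamma'(0)=(\phi,\psi)$, sandwiches the increment $F_{u_\varepsilon,v_\varepsilon}(t^{-}_{\varepsilon})-F_{u,v}(t^{-}_{0})$ between $F_{u_\varepsilon,v_\varepsilon}(t^{-}_{0})-F_{u,v}(t^{-}_{0})$ and $F_{u_\varepsilon,v_\varepsilon}(t^{-}_{\varepsilon})-F_{u,v}(t^{-}_{\varepsilon})$ using that $t^{-}_{\varepsilon}$ and $t^{-}_{0}$ are maximizers of the respective fiber maps, applies the mean value theorem at frozen $t$, and passes to the limit; for the limit passage only the continuity of $(u,v)\mapsto t^{-}_{u,v}$ is needed, and the logarithmic correction coming from the scaling of $B_0$ is killed explicitly by the tangency conditions $\int_{\R^2}u\phi\,dx=\int_{\R^2}v\psi\,dx=0$. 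Your route --- write $I^{-}(u,v)=\Phi(t^{-}_{u,v},u,v)$, differentiate by the chain rule, and annihilate the $\partial_t\Phi$ term via $f_{u,v}(t^{-}_{u,v})=0$ --- is the more conceptual ``natural constraint'' argument, and it does work, provided joint $C^1$-ness of $\Phi$ is read off from the explicit representation \eqref{Function of t}, $\Phi(t,u,v)=\frac{t^2}{2}\left(Q(u)+Q(v)\right)+\frac{1}{4}W_0(u,v)-\frac{(c_1+c_2)^2}{4}\log t-\frac{t^{2p-2}}{2p}R(u,v)$, whose $t$- and $(u,v)$-partial derivatives are jointly continuous by Lemma \ref{lemmaone}(ii) and \eqref{fuv(t)}. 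Both proofs then use Lemma \ref{lemme2} identically in the last step. What the paper's argument buys is that it requires no differentiability of anything in the joint variable $(t,u,v)$; what yours buys is brevity and transparency about where the criticality of $t^{-}$ enters.

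However, your closing remark contains a genuine error that you should delete: checking that the scalar functional $\Phi$ is $C^1$ is \emph{not} equivalent to checking that the scaling map $(u,v)\mapsto\left(\varphi(t^{-}_{u,v},u),\varphi(t^{-}_{u,v},v)\right)$ is $C^1$ into $E\times E$, and the latter statement is false in general. For fixed $u\in E$ the curve $t\mapsto\varphi(t,u)=tu(t\cdot)$ is continuous but not differentiable in $E$ (nor in $H^1$, nor in $L^2$): its formal $t$-derivative is $u(t\cdot)+t\,(\cdot)\cdot\nabla u(t\cdot)$, which requires $x\cdot\nabla u\in L^2(\R^2)$, a property generic $u\in E$ fails to have. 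So any direction $(\phi,\psi)$ along which $t^{-}_{u,v}$ varies would force a non-existent $E\times E$-valued derivative, and a proof that genuinely needed that map to be $C^1$ would collapse. The entire point --- in both the paper's proof and the corrected version of yours --- is that the $t$-dependence of the energy factors through the smooth scalar coefficients $t^2$, $t^{2p-2}$, $\log t$ via \eqref{transform}, so that the composition $I\left(\varphi(t,u),\varphi(t,v)\right)$ is $C^1$ in $(t,u,v)$ even though $(t,u,v)\mapsto\left(\varphi(t,u),\varphi(t,v)\right)$ is not. Once you differentiate $\Phi$ from its explicit formula rather than through the $E\times E$-valued scaling map, the ``main technical obstacle'' you anticipate is not an obstacle at all.
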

		\begin{proof}
			This proof is inspired by \cite[Lemma 3.15]{CJ19}. Let $(u,v)\in S(c_1)\times S(c_2)$ and $(\phi,\psi)\in T_{(u,v)}(S(c_1)\times S(c_2))$. Then $(\phi,\psi)=\gamma'(0)$, where $\gamma:(-\eta,\eta)\to S(c_1)\times S(c_2)$, $\gamma(\varepsilon):=(u_{\varepsilon},v_{\varepsilon})$ is a $C^1$ curve satisfying $\gamma(0)=(u,v)$. For convenience, we denote that $\gamma'(\varepsilon)=(u_{\varepsilon}',v_{\varepsilon}')$, so that $(u_0',v_0')=(\phi,\psi)$. Set that $t_{\varepsilon}^{-}=t_{u_{\varepsilon},v_{\varepsilon}}^{-}$ and  $t_{0}^{-}=t_{u,v}^{-}$. Consider the incremental quotient
			\begin{equation}\label{formula2}
				\frac{I^{-}(\gamma(\varepsilon))-I^{-}(\gamma(0))}{\varepsilon}=\frac{F_{u_{\varepsilon},v_{\varepsilon}}(t_{\varepsilon}^{-})-F_{u,v}(t_{0}^{-})}{\varepsilon}.
			\end{equation}
			Note that $t_{\varepsilon}^{-}$ is a local maximizer of $F_{u_{\varepsilon},v_{\varepsilon}}(t)$ (see Lemma \ref{lemma1}). Then, for sufficiently small $\varepsilon$, we obtain through mean value theorem that 
			\begin{align*}
				&F_{u_{\varepsilon},v_{\varepsilon}}(t_{\varepsilon}^{-})-F_{u,v}(t_{0}^{-})\geqslant F_{u_{\varepsilon},v_{\varepsilon}}(t_{0}^{-})-F_{u,v}(t_{0}^{-})\\
				=&\frac{(t_{0}^{-})^2}{2}\left(Q(u_{\varepsilon})+Q(v_{\varepsilon})-Q(u)-Q(v)\right)+\frac{1}{4}\left(W_0(u_{\varepsilon},v_{\varepsilon})-W_0(u,v)\right)-\frac{(t_{0}^{-})^{2p-2}}{2p}\left(R(u_{\varepsilon},v_{\varepsilon})-R(u,v)\right)\\
				=&(t_{0}^{-})^2\varepsilon\int_{\R^2}\left(\nabla u_{\tau_{1}\varepsilon}\cdot\nabla u_{\tau_{1}\varepsilon}'+\nabla v_{\tau_{1}\varepsilon}\cdot\nabla v_{\tau_{1}\varepsilon}'\right)dx+\varepsilon B_{0}\left(u_{\tau_{2}\varepsilon}^2+v_{\tau_{2}\varepsilon}^2,u_{\tau_{2}\varepsilon}u_{\tau_{2}\varepsilon}'+v_{\tau_{2}\varepsilon}v_{\tau_{2}\varepsilon}'\right)\\
				&-\varepsilon(t_{0}^{-})^{2p-2}\int_{\R^2}\left(\mu_1|u_{\tau_{3}\varepsilon}|^{2p-2}u_{\tau_{3}\varepsilon}u_{\tau_{3}\varepsilon}'+\mu_2|v_{\tau_{3}\varepsilon}|^{2p-2}v_{\tau_{3}\varepsilon}v_{\tau_{3}\varepsilon}'\right)dx\\
				&-2\varepsilon(t_{0}^{-})^{2p-2}\beta\int_{\R^2}\left(|u_{\tau_{3}\varepsilon}|^{p-2}|v_{\tau_{3}\varepsilon}|^{p}u_{\tau_{3}\varepsilon}u_{\tau_{3}\varepsilon}'+|u_{\tau_{3}\varepsilon}|^{p}|v_{\tau_{3}\varepsilon}|^{p-2}v_{\tau_{3}\varepsilon}v_{\tau_{3}\varepsilon}'\right)dx,
			\end{align*}
			where $\tau_1,\tau_2,\tau_3\in(0,1)$.
			Similarly, since $t_{0}^{-}$ is a local maximizer of $F_{u,v}(t)$, it holds
			\begin{align*}
				&F_{u_{\varepsilon},v_{\varepsilon}}(t_{\varepsilon}^{-})-F_{u,v}(t_{0}^{-})\leqslant F_{u_{\varepsilon},v_{\varepsilon}}(t_{\varepsilon}^{-})-F_{u,v}(t_{\varepsilon}^{-})\\
				=&\frac{(t_{\varepsilon}^{-})^2}{2}\left(Q(u_{\varepsilon})+Q(v_{\varepsilon})-Q(u)-Q(v)\right)+\frac{1}{4}\left(W_0(u_{\varepsilon},v_{\varepsilon})-W_0(u,v)\right)-\frac{(t_{\varepsilon}^{-})^{2p-2}}{2p}\left(R(u_{\varepsilon},v_{\varepsilon})-R(u,v)\right)\\
				=&(t_{\varepsilon}^{-})^2\varepsilon\int_{\R^2}\left(\nabla u_{\tau_{4}\varepsilon}\cdot\nabla u_{\tau_{4}\varepsilon}'+\nabla v_{\tau_{4}\varepsilon}\cdot\nabla v_{\tau_{4}\varepsilon}'\right)dx+\varepsilon B_{0}\left(u_{\tau_{5}\varepsilon}^2+v_{\tau_{5}\varepsilon}^2,u_{\tau_{5}\varepsilon}u_{\tau_{5}\varepsilon}'+v_{\tau_{5}\varepsilon}v_{\tau_{5}\varepsilon}'\right)\\
				&-\varepsilon(t_{\varepsilon}^{-})^{2p-2}\int_{\R^2}\left(\mu_1|u_{\tau_{6}\varepsilon}|^{2p-2}u_{\tau_{6}\varepsilon}u_{\tau_{6}\varepsilon}'+\mu_2|v_{\tau_{6}\varepsilon}|^{2p-2}v_{\tau_{6}\varepsilon}v_{\tau_{6}\varepsilon}'\right)dx\\
				&-2\varepsilon(t_{\varepsilon}^{-})^{2p-2}\beta\int_{\R^2}\left(|u_{\tau_{6}\varepsilon}|^{p-2}|v_{\tau_{6}\varepsilon}|^{p}u_{\tau_{6}\varepsilon}u_{\tau_{6}\varepsilon}'+|u_{\tau_{6}\varepsilon}|^{p}|v_{\tau_{6}\varepsilon}|^{p-2}v_{\tau_{6}\varepsilon}v_{\tau_{6}\varepsilon}'\right)dx,
			\end{align*}
			where $\tau_4,\tau_5,\tau_6\in(0,1)$. Due to Lemma \ref{lemme1} and \eqref{formula2}, one has that 
			\begin{align*}
				&\lim_{\varepsilon\to 0}\frac{I^{-}(\gamma(\varepsilon))-I^{-}(\gamma(0))}{\varepsilon}\\
				&=(t_{0}^{-})^2\int_{\R^2}\left(\nabla u\cdot\nabla \phi+\nabla v\cdot\nabla\psi\right)dx+B_{0}\left(u^2+v^2,u\phi+v\psi\right)\\
				&\quad+(t_{0}^{-})^{2p-2}\int_{\R^2}\left[\mu_1|u|^{2p-2}u\phi+\mu_2|v|^{2p-2}v\psi+2\beta\left(|u|^{p-2}|v|^{p}u\phi+|u|^{p}|v|^{p-2}v\psi\right)\right]dx\\
				&=\int_{\R^2}\left(\nabla\varphi(t_{0}^{-},u)\cdot\nabla\varphi(t_{0}^{-},\phi)+\nabla\varphi(t_{0}^{-},v)\cdot\nabla\varphi(t_{0}^{-},\psi)\right)dx\\
				&\quad+B_{0}\left(\varphi^2(t_{0}^{-},u)+\varphi^2(t_{0}^{-},v),\varphi(t_{0}^{-},u)\varphi(t_{0}^{-},\phi)+\varphi(t_{0}^{-},v)\varphi(t_{0}^{-},\psi)\right)\\
				&\quad+\log t_{0}^{-}\int_{\R^2}\left(u^2(x)+v^2(x)\right)dx\int_{\R^2}\left(u(y)\phi(y)+v(y)\psi(y)\right)dy\\
				&\quad-\int_{\R^2}\left(\mu_1|\varphi(t_{0}^{-},u)|^{2p-2}\varphi(t_{0}^{-},u)\varphi(t_{0}^{-},\phi)+\mu_2|\varphi(t_{0}^{-},v)|^{2p-2}\varphi(t_{0}^{-},v)\varphi(t_{0}^{-},\psi)\right)dx\\
				&\quad-2\beta\int_{\R^2}\left(|\varphi(t_{0}^{-},u)|^{p-2}|\varphi(t_{0}^{-},v)|^{p}\varphi(t_{0}^{-},u)\varphi(t_{0}^{-},\phi)+|\varphi(t_{0}^{-},u)|^{p}|\varphi(t_{0}^{-},v)|^{p-2}\varphi(t_{0}^{-},v)\varphi(t_{0}^{-},\psi)\right)dx\\
				&=\langle dI|_{S(c_1)\times S(c_2)}(\varphi(t_{0}^{-},u),\varphi(t_{0}^{-},v)),(\varphi(t_{0}^{-},\phi),\varphi(t_{0}^{-},\psi))\rangle\\
				&\quad+\log t_{0}^{-}\int_{\R^2}\left(u^2(x)+v^2(x)\right)dx\int_{\R^2}\left(u(y)\phi(y)+v(y)\psi(y)\right)dy\\
				&=\langle dI|_{S(c_1)\times S(c_2)}(\varphi(t_{u,v}^{-},u),\varphi(t_{u,v}^{-},v)),(\varphi(t_{u,v}^{-},\phi),\varphi(t_{u,v}^{-},\psi))\rangle.
			\end{align*}
		\end{proof}

		\begin{lemma}\label{lemme4}
			Let $\mathcal{F}$ be a homotopy-stable family of compact subsets of $S(c_1)\times S(c_2)$ with closed boundary $B$ and
			\begin{equation*}
				e_{\mathcal{F}}^{-}:=\inf_{A\in\mathcal{F}}\sup_{(u,v)\in A}I^{-}(u,v).
			\end{equation*}
			Suppose that $B$ is contained in a connected component of $\Omega^{-}(c_1,c_2)$, and $\sup_{B}I^{-}<e_{\mathcal{F}}^{-}<+\infty$. Then there exists a Palais-Smale sequence $\left\{(u_n,v_n)\right\}\subset\Omega^{-}(c_1,c_2)$ for $I$ restricted to $S(c_1)\times S(c_2)$ at level $e_{\mathcal{F}}^{-}$. 
		\end{lemma}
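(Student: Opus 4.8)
The plan is to apply Ghoussoub's minimax principle (Lemma \ref{Minimaxthm}) to the auxiliary functional $I^{-}$ rather than to $I$, and then to transport the resulting Palais--Smale sequence onto $\Omega^{-}(c_1,c_2)$ by the fiber map. By Lemma \ref{lemme3}, $I^{-}$ is $C^1$ on the complete connected $C^1$-Finsler manifold $S(c_1)\times S(c_2)$, and the hypotheses $\sup_B I^{-}<e_{\mathcal F}^{-}<+\infty$ are precisely \eqref{minimaxF1} with $J=I^{-}$ and $c=e_{\mathcal F}^{-}$. Fixing a minimizing sequence $\{A_n\}\subset\mathcal{F}$ with $\sup_{A_n}I^{-}\to e_{\mathcal F}^{-}$ (to be chosen inside $\Omega^{-}(c_1,c_2)$, see below), Lemma \ref{Minimaxthm} produces $\{(w_n,z_n)\}\subset S(c_1)\times S(c_2)$ with $I^{-}(w_n,z_n)\to e_{\mathcal F}^{-}$, $dI^{-}|_{S(c_1)\times S(c_2)}(w_n,z_n)\to 0$ and $\mathrm{dist}((w_n,z_n),A_n)\to 0$. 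Writing $t_n:=t_{w_n,z_n}^{-}$ and $(u_n,v_n):=(\varphi(t_n,w_n),\varphi(t_n,z_n))$, Lemma \ref{lemma3} gives $(u_n,v_n)\in\Omega^{-}(c_1,c_2)$, and $I(u_n,v_n)=I^{-}(w_n,z_n)\to e_{\mathcal F}^{-}$ by the definition of $I^{-}$.

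To prepare the boundedness argument I would first arrange $A_n\subset\Omega^{-}(c_1,c_2)$. Using the scaling relations \eqref{transform} one checks that $t^{-}$ transforms as $t_{\varphi(s,u),\varphi(s,v)}^{-}=t_{u,v}^{-}/s$, whence $I^{-}$ is invariant under $(u,v)\mapsto(\varphi(s,u),\varphi(s,v))$, and from \eqref{Omega} and Lemma \ref{lemma1} that $t_{u,v}^{-}\equiv 1$ on $\Omega^{-}(c_1,c_2)$. The map $\eta(s,(u,v)):=(\varphi(1-s+s\,t_{u,v}^{-},u),\varphi(1-s+s\,t_{u,v}^{-},v))$ is continuous by Lemma \ref{lemme1}, equals the identity on $(\{0\}\times X)\cup([0,1]\times B)$ since $B$ lies in $\Omega^{-}(c_1,c_2)$, and sends each $A_n$ into $\Omega^{-}(c_1,c_2)$ without altering $\sup_{A_n}I^{-}$. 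By homotopy stability $\eta(1,A_n)\in\mathcal{F}$, so I may replace $A_n$ by $\eta(1,A_n)$ and assume $A_n\subset\Omega^{-}(c_1,c_2)$.

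The key difficulty is to bound $\{t_n\}$ away from $0$ and $+\infty$. Since $t_n=t_{w_n,z_n}^{-}$ is determined through $f_{w_n,z_n}$ by the \emph{translation-invariant} quantities $Q(w_n)+Q(z_n)$ and $R(w_n,z_n)$, it suffices to control these. Choosing $(a_n,b_n)\in A_n\subset\Omega^{-}(c_1,c_2)$ with $\|(w_n,z_n)-(a_n,b_n)\|_{E\times E}\to 0$, the induced $H^1$- and $L^{2p}$-convergence yields $Q(w_n)+Q(z_n)=Q(a_n)+Q(b_n)+o(1)$ and $R(w_n,z_n)=R(a_n,b_n)+o(1)$. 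On $\Omega^{-}(c_1,c_2)$ the identity $\M=0$ gives $I(a_n,b_n)=\frac{p-2}{2(p-1)}(Q(a_n)+Q(b_n))+\frac14 W_0(a_n,b_n)+\frac{(c_1+c_2)^2}{8(p-1)}$ together with $R(a_n,b_n)=\frac{p}{p-1}(Q(a_n)+Q(b_n)-\frac{(c_1+c_2)^2}{4})$; combining $W_0\ge -W_2$ with the estimates of Lemma \ref{lemmathree} makes $I$ coercive in $Q(a_n)+Q(b_n)$ on $\Omega^{-}(c_1,c_2)$, so the bound $I(a_n,b_n)\le\sup_{A_n}I^{-}\le e_{\mathcal F}^{-}+1$ forces $Q(a_n)+Q(b_n)$ to stay bounded above, while Lemma \ref{lemma4} bounds it below. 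Consequently $Q(w_n)+Q(z_n)$ and $R(w_n,z_n)$ remain in fixed compact subintervals of $(0,+\infty)$, and the root analysis of Proposition \ref{proposition} confines $t_n$ to some $[\delta,T]\subset(0,+\infty)$.

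Finally I would transfer the Palais--Smale property through Lemma \ref{lemme3}. Given $(\tilde\phi,\tilde\psi)\in T_{(u_n,v_n)}(S(c_1)\times S(c_2))$, Lemma \ref{lemme2} writes it as $(\varphi(t_n,\phi),\varphi(t_n,\psi))$ with $(\phi,\psi)=(\varphi(1/t_n,\tilde\phi),\varphi(1/t_n,\tilde\psi))$, and Lemma \ref{lemme3} gives $\langle dI|_{S(c_1)\times S(c_2)}(u_n,v_n),(\tilde\phi,\tilde\psi)\rangle=\langle dI^{-}|_{S(c_1)\times S(c_2)}(w_n,z_n),(\phi,\psi)\rangle$. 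A direct change of variables shows $\|\varphi(1/t_n,\tilde\phi)\|_{E}^2\le t_n^{-2}\|\nabla\tilde\phi\|_2^2+(1+\log^{+}t_n)\|\tilde\phi\|_2^2+\|\tilde\phi\|_0^2$, so the bound $\delta\le t_n\le T$ yields $\|(\phi,\psi)\|_{E\times E}\le C(\delta,T)\|(\tilde\phi,\tilde\psi)\|_{E\times E}$. Hence $\|dI|_{S(c_1)\times S(c_2)}(u_n,v_n)\|\le C(\delta,T)\,\|dI^{-}|_{S(c_1)\times S(c_2)}(w_n,z_n)\|\to 0$, and $\{(u_n,v_n)\}\subset\Omega^{-}(c_1,c_2)$ is the desired Palais--Smale sequence for $I$ on $S(c_1)\times S(c_2)$ at level $e_{\mathcal F}^{-}$. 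The genuine obstacle is the two-sided bound on $t_n$: because $I^{-}$ is scaling-invariant, the minimax sequence $\{(w_n,z_n)\}$ carries no a priori compactness, and only the reduction to a family inside $\Omega^{-}(c_1,c_2)$, where the translation-invariant energy is coercive, makes $t_n$ controllable.
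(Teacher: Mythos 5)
Your proposal is correct and follows essentially the same route as the paper's proof: deform the minimizing family into $\Omega^{-}(c_1,c_2)$ via the fiber-map homotopy (using $t_{u,v}^{-}\equiv 1$ on $\Omega^{-}$ and the scaling invariance of $I^{-}$), apply Ghoussoub's minimax principle (Lemma \ref{Minimaxthm}) to $I^{-}$, obtain a two-sided bound on the scaling parameters $t_n$ from the coercivity of $I$ on $\Omega^{-}(c_1,c_2)$ (through $\M=0$, Lemma \ref{lemmathree}(i) and Lemma \ref{lemma4}) combined with $\mathrm{dist}(\cdot,A_n)\to 0$, and transfer the Palais--Smale property through Lemmas \ref{lemme2} and \ref{lemme3}. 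The only cosmetic difference is in the $t_n$ bound: the paper writes $t_n^2=\bigl(Q(u_n)+Q(v_n)\bigr)/\bigl(Q(\hat u_n)+Q(\hat v_n)\bigr)$ and bounds numerator and denominator separately, whereas you control the coefficients of $f_{w_n,z_n}$ via proximity to $A_n$ and invoke the root analysis of Proposition \ref{proposition}; both rest on the same underlying estimates.
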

		\begin{proof}
			We follow ideas from \cite[Lemma 3.16]{CJ19}. Let $\left\{D_{n}\right\}\subset \mathcal{F}$ such that $\max_{(u,v)\in D_{n}}I^{-}<e_{\mathcal{F}}^{-}+\frac{1}{n}$. We define the following deformation:
			\begin{equation*}
				\alpha:[0,1]\times S(c_1)\times S(c_2)\to S(c_1)\times S(c_2),\quad\alpha(s,u,v)=(\varphi(1-s+st_{u,v}^{-},u),\varphi(1-s+st_{u,v}^{-},v)).
			\end{equation*}
			Noticing that for any $(u,v)\in \Omega^{-}(c_1,c_2)$, $t_{u,v}^{-}=1$  and $B\subset\Omega^{-}(c_1,c_2)$, we have $\alpha(t,u,v)=(u,v)$ for $(t,u,v)\in (\{0\}\times S(c_1)\times S(c_2)\cup([0,1]\times B)$. Since $\alpha$ is continuous,  by Definition \ref{GhoussobDefinition} we have 
			\begin{equation*}
				A_n:=\alpha(\{1\}\times D_n)=\left\{(\varphi(t_{u,v}^{-},u),\varphi(t_{u,v}^{-},v)):(u,v)\in D_n\right\}\in\mathcal{F}.
			\end{equation*}
			It is clear that $A_n\subset\Omega^{-}(c_1,c_2)$ for all $n\in\mathbb{N}$. Due to the definition of $\alpha$ and $I^{-}$, one has that $\max_{D_{n}}I^{-}=\max_{A_{n}}I^{-}$ and hence $\{A_n\}\subset\Omega^{-}(c_1,c_2)$ is another minimizing sequence of $e_{\mathcal{F}}^{-}$. Using Lemma \ref{Minimaxthm}, we obtain a Palais-Smale sequence $\{(\hat{u}_n,\hat{v}_n)\}$ for $I^{-}$ in $S(c_1)\times S(c_2)$ at level $e_{\mathcal{F}}^{-}$ such that $\text{dist}((\hat{u}_n,\hat{v}_n),A_n)\to 0$ as $n\to\infty$. Let  $t_n=t_{\hat{u}_n,\hat{v}_n}^{-}$ and $(u_n,v_n)=(\varphi(t_n,\hat{u}_n),\varphi(t_n,\hat{v}_n))\in\Omega^{-}(c_1,c_2)$. We claim that there exists $C>0$ such that 
			\begin{equation}\label{estimate1}
				\frac{1}{C}\leqslant t_{n}^{2}\leqslant C
			\end{equation}
			for $n\in\mathbb{N}$ large enough. Indeed, Note that 
			\begin{equation*}
				t_{n}^{2}=\frac{Q(u_n)+Q(v_n)}{Q(\hat{u}_{n})+Q(\hat{v}_{n})}.
			\end{equation*}
			By definition of $I^{-}$, one has $I(u_n,v_n)=I^{-}(\hat{u}_n,\hat{v}_n)$ and $I(u_n,v_n)\to e_{\mathcal{F}}^{-}$, as $n\to\infty$. Note that the functional $I$ restricted on $\Omega(c_1,c_2)$ can be represented as 
			\begin{equation*}
				I(u,v)=\frac{p-2}{2(p-1)}\left(Q(u)+Q(v)\right)+\frac{1}{4}W_0(u,v)+\frac{(c_1+c_2)^2}{8(p-1)}.
			\end{equation*}
			Then from Lemma \ref{lemmathree}(i), one has
			\begin{equation}\label{IgeqQ}
				e_{\mathcal{F}}^{-}+o(1)=I(u_n,v_n)\geqslant\frac{p-2}{2(p-1)}\left(Q(u_n)+Q(v_n)\right)-\frac{C}{4}\left(c_{1}^{\frac{3}{4}}Q(u_n)^{\frac{1}{4}}+c_{2}^{\frac{3}{4}}Q(v_n)^{\frac{1}{4}}\right)^2+\frac{(c_1+c_2)^2}{8(p-1)}.
			\end{equation}
			By \eqref{IgeqQ} and Lemma \ref{lemma4}, there exists $M_1>0$ such that 
			\begin{equation*}
				\frac{1}{M_1}\leqslant Q(u_n)+Q(v_n)\leqslant M_1.
			\end{equation*}
			Since $\{A_n\}\subset\Omega^{-}(c_1,c_2)$ is a minimizing sequence for $e_{\mathcal{F}}^{-}$, from \eqref{IgeqQ} we know that $\{A_n\}$ is uniformly bounded in $H^1(\R^2)\times H^1(\R^2)$. Then from $\text{dist}((\hat{u}_n,\hat{v}_n),A_n)\to 0$ as $n\to\infty$, there exists $M_2>0$ such that 
			\begin{equation*}
				\frac{1}{M_2}\leqslant Q(\hat{u}_{n})+Q(\hat{v}_{n})\leqslant M_2.
			\end{equation*}
			This proves the claim.
			
			We show that $(u_n,v_n)$ is the Palais-Smale sequence for $I$ restricted to $S(c_1)\times S(c_2)$ at level $e_{\mathcal{F}}^{-}$. Denoting by $\lVert\cdot\rVert_{*}$ the norm of $(T_{(u_n,v_n)}(S(c_1)\times S(c_2)))^*$, it holds
			\begin{equation*}
				\begin{aligned}
					&\lVert dI|_{S(c_1)\times S(c_2)}(u_n,v_n)\rVert_{*}=\sup_{\substack{(\phi,\psi)\in T_{(u_n,v_n)}(S(c_1)\times S(c_2)),\\\lVert (\phi,\psi)\rVert\leqslant 1}}\left|\langle dI|_{S(c_1)\times S(c_2)}(u_n,v_n),(\phi,\psi)\rangle\right|\\
					=&\sup_{\substack{(\phi,\psi)\in T_{(u_n,v_n)}(S(c_1)\times S(c_2)),\\\lVert (\phi,\psi)\rVert\leqslant 1}}\left|\langle dI|_{S(c_1)\times S(c_2)}(u_n,v_n),(\varphi(t_n,\varphi(\frac{1}{t_n},\phi)),\varphi(t_n,\varphi(\frac{1}{t_n},\psi)))\rangle\right|.
				\end{aligned}
			\end{equation*}
			From Lemma \ref{lemme2} we know that $T_{(\hat{u}_n,\hat{v}_n)}(S(c_1)\times S(c_2))\to T_{(u_n,v_n)}(S(c_1)\times S(c_2))$ defined by $(\phi,\psi)\mapsto(\varphi(t_n,\phi),\varphi(t_n,\psi))$ is isomorphism. Then it follows from Lemma \ref{lemme3} that 
			\begin{equation}\label{estimate2}
				\begin{aligned}
					\lVert dI|_{S(c_1)\times S(c_2)}(u_n,v_n)\rVert_{*}
					=\sup_{\substack{(\phi,\psi)\in T_{(u_n,v_n)}(S(c_1)\times S(c_2)),\\\lVert (\phi,\psi)\rVert\leqslant 1}}\left|\langle dI^{-}|_{S(c_1)\times S(c_2)}(\hat{u}_n,\hat{v}_n),(\varphi(\frac{1}{t_n},\phi),\varphi(\frac{1}{t_n},\psi))\rangle\right|.
				\end{aligned}
			\end{equation}
			By \eqref{estimate1} one has that (increasing $C$ if necessary) $\lVert(\varphi(\frac{1}{t_n},\phi),\varphi(\frac{1}{t_n},\psi))\rVert\leqslant C\lVert (\phi,\psi)\rVert\leqslant C$, hence it follows from \eqref{estimate2} that $(u_n,v_n)$ is the Palais-Smale sequence for $I$ restricted to $S(c_1)\times S(c_2)$ at level $e_{\mathcal{F}}^{-}$.
		\end{proof}
		
		\begin{lemma}\label{lemme5}
			There exists a Palais-Smale sequence $\left\{(u_n,v_n)\right\}\subset\Omega^{-}(c_1,c_2)$ for $I$ restricted to $S(c_1)\times S(c_2)$ at the level $m_2$.
		\end{lemma}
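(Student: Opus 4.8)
The plan is to deduce this lemma as a clean application of the abstract minimax principle from Lemma \ref{lemme4}, applied to the auxiliary functional $I^-$ with the simplest admissible homotopy-stable family. Concretely, I would take the boundary $B=\varnothing$, which is admissible by the remark following Definition \ref{GhoussobDefinition}, and let $\mathcal{F}$ be the family of all singletons $\{(u,v)\}$ with $(u,v)\in S(c_1)\times S(c_2)$. This $\mathcal{F}$ is homotopy-stable with boundary $\varnothing$: condition (i) holds vacuously, and for any $\eta\in C([0,1]\times(S(c_1)\times S(c_2));S(c_1)\times S(c_2))$ with $\eta(0,\cdot)=\mathrm{id}$ the image $\eta(\{1\}\times\{(u,v)\})=\{\eta(1,(u,v))\}$ is again a singleton, hence lies in $\mathcal{F}$. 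With this choice the minimax level degenerates to an infimum,
\begin{equation*}
	e_{\mathcal{F}}^{-}=\inf_{A\in\mathcal{F}}\sup_{(u,v)\in A}I^{-}(u,v)=\inf_{(u,v)\in S(c_1)\times S(c_2)}I^{-}(u,v).
\end{equation*}

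The substantive step is then to identify $e_{\mathcal{F}}^{-}$ with $m_2$. For $e_{\mathcal{F}}^{-}\geqslant m_2$, note that for every $(u,v)\in S(c_1)\times S(c_2)$ Lemma \ref{lemma3} gives $\left(\varphi(t_{u,v}^{-},u),\varphi(t_{u,v}^{-},v)\right)\in\Omega^{-}(c_1,c_2)$, so by the definition of $I^{-}$ and of $m_2$ one has $I^{-}(u,v)=I\left(\varphi(t_{u,v}^{-},u),\varphi(t_{u,v}^{-},v)\right)\geqslant m_2$; taking the infimum over $(u,v)$ yields $e_{\mathcal{F}}^{-}\geqslant m_2$. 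For the reverse inequality I would restrict the infimum to $\Omega^{-}(c_1,c_2)$: for $(u,v)\in\Omega^{-}(c_1,c_2)$ the relations $f_{u,v}(1)=0$ and $g_{u,v}(1)<0$ in \eqref{Omega}, compared with Lemma \ref{lemma1}, force $t_{u,v}^{-}=1$, whence $I^{-}(u,v)=I(u,v)$ on $\Omega^{-}(c_1,c_2)$. Therefore $e_{\mathcal{F}}^{-}\leqslant\inf_{\Omega^{-}(c_1,c_2)}I^{-}=\inf_{\Omega^{-}(c_1,c_2)}I=m_2$, and combining the two bounds gives $e_{\mathcal{F}}^{-}=m_2$.

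It remains to verify the hypotheses of Lemma \ref{lemme4}. Finiteness of $e_{\mathcal{F}}^{-}=m_2$ follows from $m_1=\tilde{m}\leqslant m_2$ (Lemma \ref{lemma5}) together with $\tilde{m}>-\infty$ (Lemma \ref{finite of m}) for the lower bound, and from $m_2<+\infty$ for the upper bound, the latter because $\Omega^{-}(c_1,c_2)$ is nonempty by Lemma \ref{lemma3}. The requirement that $B$ lie in a connected component of $\Omega^{-}(c_1,c_2)$ is vacuous for $B=\varnothing$, and the strict inequality $\sup_{B}I^{-}<e_{\mathcal{F}}^{-}$ holds since $\sup_{\varnothing}I^{-}=-\infty<m_2$. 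Lemma \ref{lemme4} then produces a Palais-Smale sequence $\{(u_n,v_n)\}\subset\Omega^{-}(c_1,c_2)$ for $I$ restricted to $S(c_1)\times S(c_2)$ at level $e_{\mathcal{F}}^{-}=m_2$, which is exactly the assertion. I do not expect any serious obstacle: the only genuinely nontrivial point is the identification $e_{\mathcal{F}}^{-}=m_2$, and that is immediate from Lemma \ref{lemma3} and the definition of $I^{-}$. The heavy lifting—the $C^1$ regularity of $I^{-}$ and the transfer of the Palais-Smale condition from $I^{-}$ back to $I$—has already been accomplished in Lemmas \ref{lemme1}–\ref{lemme4}, so this statement is essentially bookkeeping on top of that machinery.
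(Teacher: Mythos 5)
Your proof is correct and takes essentially the same approach as the paper's: the same choice of the singleton family with empty boundary $B=\varnothing$, followed by an application of Lemma \ref{lemme4} and the identification $e_{\mathcal{F}}^{-}=m_2$. You actually supply more detail than the paper, which simply asserts the chain $\inf_{S(c_1)\times S(c_2)}I^{-}=m_2$, whereas you justify it via Lemma \ref{lemma3} together with the observation that $t_{u,v}^{-}=1$ on $\Omega^{-}(c_1,c_2)$, and you also check the finiteness hypothesis of Lemma \ref{lemme4} explicitly.
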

		\begin{proof}
			Take the class $\overline{\mathcal{F}}$ of all singletons belonging to $S(c_1)\times S(c_2)$ and $B=\varnothing$. By Definition \ref{GhoussobDefinition}, $\overline{\mathcal{F}}$ is a homotopy-stable family of compact subsets of $S(c_1)\times S(c_2)$ without boundary. Then by Lemma \ref{lemme4} we obtain the Palais-Smale sequence $\left\{(u_n,v_n)\right\}\subset\Omega^{-}(c_1,c_2)$ for $I$ restricted to $S(c_1)\times S(c_2)$ at level $e_{\mathcal{F}}^{-}$. Moreover, note that
			\begin{equation*}
				e_{\mathcal{F}}^{-}:=\inf_{A\in\overline{\mathcal{F}}}\max_{(u,v)\in A}I^{-}(u,v)=\inf_{(u,v)\in S(c_1)\times S(c_2)}I^{-}(u,v)=m_2,
			\end{equation*}
			which completes the proof.
		\end{proof}
		
		\begin{proof}[Proof of Theorem \ref{thm2}\rm{(ii)}]
			In virtue of Lemma \ref{lemme5}, we obtain the Palais-Smale sequence $\{(u_n,v_n)\}\subset\Omega^{-}(c_1,c_2)$ for $I$ restricted to $S(c_1)\times S(c_2)$ at level $m_2$. The definition of $\Omega^{-}(c_1,c_2)$ shows that  
			\begin{equation}\label{formula in step1}
				I(u_n,v_n)=\frac{p-2}{2(p-1)}\left(Q(u_n)+Q(v_n)\right)+\frac{1}{4}W_0(u_n,v_n)+\frac{(c_1+c_2)^2}{8(p-1)}=m_2+o(1),
			\end{equation}
			as $n\to\infty$. Then, from Lemma \ref{lemmathree}(i), we have
			\begin{equation*}
				\frac{p-2}{2(p-1)}\left(Q(u_n)+Q(v_n)\right)-\frac{C}{4}\left(c_{1}^{\frac{3}{4}}Q(u_n)^{\frac{1}{4}}+c_{2}^{\frac{3}{4}}Q(v_n)^{\frac{1}{4}}\right)^2+\frac{(c_1+c_2)^2}{8(p-1)}\leqslant m_2+o(1),
			\end{equation*}
			which implies that $\left\{(u_n,v_n)\right\}$ is bounded in $H^1(\R^2)\times H^1(\R^2)$. Therefore, from \eqref{formula in step1} we know that $\left\{W_1(u_n,v_n)\right\}$ is bounded. By Lemma \ref{weak convergent sequence}, there exists a sequence $\left\{x_n\right\}\subset\R^2$ and some $(u^{-},v^{-})\in S(c_1)\times S(c_2)$ such that, up to a subsequence, 
			\begin{equation}\label{wconvergencOmega-}
				(\tilde{u}_n,\tilde{v}_n)\rightharpoonup(u^{-},v^{-})\text{ in } S(c_1)\times S(c_2),\quad\text{as }n\to\infty,
			\end{equation}
			where $(\tilde{u}_n,\tilde{v}_n)=(u_n(\cdot-x_n),v_n(\cdot-x_n))$. It also holds that sequence $\left\{(\tilde{u}_n,\tilde{v}_n)\right\}\subset \Omega^{-}(c_1,c_2)$ is a Palais-Smale sequence for $I$ at level $m_2$. By Definition \ref{definition of critical point}, there exist two real number sequences $\left\{\lambda_{1,n}\right\}$ and $\left\{\lambda_{2,n}\right\}$ such that
			\begin{equation*}
				dI(\tilde{u}_n,\tilde{v}_n)+\lambda_{1,n}(\tilde{u}_n,0)+\lambda_{2,n}(0,\tilde{v}_n)\to 0\text{ in }E^{*}\times E^{*},\quad\text{as }n\to\infty.
			\end{equation*}
			Hence,
			\begin{equation}\label{boundlambda1}
				\begin{aligned}
					o(1)&=\langle dI(\tilde{u}_n,\tilde{v}_n),(\tilde{u}_n,0)\rangle+\lambda_{1,n}c_1\\
					&=Q(\tilde{u}_n)+V_1(\tilde{u}_{n})+B_1(\tilde{v}_{n}^2,\tilde{u}_{n}^2)-B_2(\tilde{u}_{n}^2+\tilde{v}_{n}^2,\tilde{u}_{n}^2)-\mu_{1}P(\tilde{u}_n)-\beta P_0(\tilde{u}_n,\tilde{v}_n)+\lambda_{1,n}c_1,
				\end{aligned}
			\end{equation}
			\begin{equation}\label{boundlambda2}
				\begin{aligned}
					o(1)&=\langle dI(\tilde{u}_n,\tilde{v}_n),(0,\tilde{v}_n)\rangle+\lambda_{2,n}c_2\\
					&=Q(\tilde{v}_n)+V_1(\tilde{v}_{n})+B_1(\tilde{u}_{n}^2,\tilde{v}_{n}^2)-B_2(\tilde{u}_{n}^2+\tilde{v}_{n}^2,\tilde{v}_{n}^2)-\mu_{2}P(\tilde{v}_n)-\beta P_0(\tilde{u}_n,\tilde{v}_n)+\lambda_{2,n}c_2.
				\end{aligned}
			\end{equation}
			Due to the invariance by translation, the sequence $\left\{(\tilde{u}_n,\tilde{v}_n)\right\}$ is bounded in $H^1(\R^2)\times H^1(\R^2)$ and $\left\{W_1(\tilde{u}_n,\tilde{v}_n)\right\}$ is bounded.  Using Lemma \ref{lemmathree}, \eqref{B2} and \eqref{decomposition} in \eqref{boundlambda1} and \eqref{boundlambda2}, it follows that $\left\{\lambda_{1,n}\right\}$ and $\left\{\lambda_{2,n}\right\}$ are both bounded. Passing to subsequence, we assume that $\lambda_{1,n}\to\lambda_{1}^{-}$ and $\lambda_{2,n}\to\lambda_{2}^{-}$, as $n\to\infty$. Then we obtain
			\begin{equation*}
				dI(\tilde{u}_n,\tilde{v}_n)+\lambda_{1}^{-}(\tilde{u}_n,0)+\lambda_{2}^{-}(0,\tilde{v}_n)\to 0\text{ in }E^{*}\times E^{*},\quad\text{as }n\to\infty.
			\end{equation*}
			Since $(u,v)\mapsto dI(u,v)+\lambda_{1}^{-}(u,0)+\lambda_{2}^{-}(0,v)$ is weak-to-weak* continuous, it follows from \eqref{wconvergencOmega-} that $(\lambda_{1}^{-},\lambda_{2}^{-},u^{-},v^{-})$ is actually a solution of problem \eqref{2-NHF} and $I(u^{-},v^{-})=m_2$. By Lemma \ref{lemma2} and weak lower semicontinuity of $Q$, we have $\M(u^{-},v^{-})=0$ and $g_{u^{-},v^{-}}(1)<0$, which implies by \eqref{Omega} that $(u^{-},v^{-})\in\Omega^{-}(c_1,c_2)$, where $g_{u^{-},v^{-}}$ comes from \eqref{guv(t)}.
			Thus,  we know from Lemma \ref{lemma5} that $I(u^{+},v^{+})=m_1<m_2=I(u^{-},v^{-})$. 
		\end{proof}
		
		\section*{Appendix A: Proof of Lemma \ref{Pohožaev-Nehari identity} }
		\begin{appendix}
			\setcounter{equation}{0}
			\renewcommand\theequation{A.\arabic{equation}}
			\begin{proof}
				Using the similar argument in \cite[Lemma 2.4]{CFMM22}, we know that $u,v\in C^2(\R^2)$. Moreover, $w_{u,v}:=\int_{\R^2}\log |x-y|\left(u^2(y)+v^2(y)\right)dy$ is also of class $C^2(\R^2)$. Inspired by \cite[Proposition 1]{BL1983}, we multiply the first equation by $x\cdot\nabla u$ and the second equation by $x\cdot\nabla v$. Integrating on the ball $B_R(0)$ for $R>0$ and adding two equations, we have
				\begin{equation}\label{sum}
					\begin{aligned}
						&\int_{B_R(0)}\left[-\Delta u(x\cdot\nabla u)+\lambda_1u(x\cdot\nabla u)+w_{u,v}u(x\cdot\nabla u)\right]dx\\	&\quad+\int_{B_R(0)}\left[-\Delta v(x\cdot\nabla v)+\lambda_2v(x\cdot\nabla v)+w_{u,v}v(x\cdot\nabla v)\right]dx\\
						&=\int_{B_R(0)}\big[\mu_{1}\left|u\right|^{2p-2}u(x\cdot\nabla u)+\mu_{2}\left|v\right|^{2p-2}v(x\cdot\nabla v)\\
						&\quad\quad+\beta\left|v\right|^{p}\left|u\right|^{p-2}u(x\cdot\nabla u)+\beta\left|u\right|^{p}\left|v\right|^{p-2}v(x\cdot\nabla v)\big]dx.
					\end{aligned}
				\end{equation}
				For any $u\in C^2(\R^2)$,  it satisfies
				\begin{equation*}
					\Delta u(x\cdot\nabla u)=\mathrm{div}\Big(\nabla u(x\cdot\nabla u)-x\frac{|\nabla u|^{2}}{2}\Big)\quad\mathrm{on}~ \mathbb{R}^{2},
				\end{equation*}
				so using the divergence theorem, one has
				\begin{equation}\label{sum1}
					\int_{B_{R}(0)}-\Delta u(x\cdot\nabla u)dx=-\frac{1}{R}\int_{\partial B_{R}(0)}|x\cdot\nabla u|^{2}dS+\frac{R}{2}\int_{\partial B_{R}(0)}|\nabla u|^{2}dS.
				\end{equation}
				Then we have
				\begin{equation}\label{sum2}
					\int_{B_R(0)}\lambda_1u(x\cdot\nabla u)dx=R\int_{\partial B_{R}(0)}\dfrac{\lambda_{1}}{2}|u|^2dS-\int_{B_{R}(0)}\lambda_{1}|u|^2dx,
				\end{equation}
				and
				\begin{equation}\label{sum3}
					\int_{B_R(0)}\mu_{1}\left|u\right|^{2p-2}u(x\cdot\nabla u)dx=R\int_{\partial B_{R}(0)}\dfrac{\mu_{1}}{2p}|u|^{2p}dS-\int_{B_{R}(0)}\dfrac{\mu_{1}}{p}|u|^{2p}dx.
				\end{equation}
				Note that 
				\begin{equation*}
					w_{u,v}u(x\cdot\nabla u)=\frac{1}{2}\big(\mathrm{div}[xw_{u,v}u^2]-u^2(x\cdot\nabla w_{u,v})-2w_{u,v}u^2\big).
				\end{equation*}
				In view of the divergence theorem, we have
				\begin{equation}\label{sum4}
					\int_{B_R(0)}w_{u,v}u(x\cdot\nabla u)dx=\frac{R}{2}\int_{\partial B_R(0)}w_{u,v}u^2dS-\frac{1}{2}\int_{B_R(0)}u^2(x\cdot\nabla w_{u,v})dx-\int_{B_R(0)}w_{u,v}u^2dx.
				\end{equation}
				Similarly, it holds that
				\begin{equation}\label{sum5}
					\int_{B_{R}(0)}-\Delta v(x\cdot\nabla v)dx=-\frac{1}{R}\int_{\partial B_{R}(0)}|x\cdot\nabla v|^{2}dS+\frac{R}{2}\int_{\partial B_{R}(0)}|\nabla v|^{2}dS,
				\end{equation}
				\begin{equation}\label{sum6}
					\int_{B_R(0)}\lambda_2v(x\cdot\nabla v)dx=R\int_{\partial B_{R}(0)}\dfrac{\lambda_{2}}{2}|v|^2dS-\int_{B_{R}(0)}\lambda_{2}|v|^2dx,
				\end{equation}
				\begin{equation}\label{sum7}
					\int_{B_R(0)}\mu_{2}\left|v\right|^{2p-2}v(x\cdot\nabla v)dx=R\int_{\partial B_{R}(0)}\dfrac{\mu_{2}}{2p}|v|^{2p}dS-\int_{B_{R}(0)}\dfrac{\mu_{2}}{p}|v|^{2p}dx,
				\end{equation}
				and
				\begin{equation}\label{sum8}
					\int_{B_R(0)}w_{u,v}v(x\cdot\nabla v)dx=\frac{R}{2}\int_{\partial B_R(0)}w_{u,v}v^2dS-\frac{1}{2}\int_{B_R(0)}v^2(x\cdot\nabla w_{u,v})dx-\int_{B_R(0)}w_{u,v}v^2dx.
				\end{equation}
				In addition, one has
				\begin{equation}\label{sum9}
					\begin{aligned}	\int_{B_R(0)}\left[\beta\left|v\right|^{p}\left|u\right|^{p-2}u(x\cdot\nabla u)+\beta\left|u\right|^{p}\left|v\right|^{p-2}v(x\cdot\nabla v)\right] dx
						=R\int_{\partial B_R(0)}\dfrac{\beta}{p}|uv|^pdS-\int_{B_R(0)}\dfrac{2\beta}{p}|uv|^pdx.
					\end{aligned}
				\end{equation}
				According to \eqref{sum1}-\eqref{sum9}, \eqref{sum} can be rewritten as
				\begin{equation}\label{boundary}
					\begin{aligned}
						&\int_{B_R(0)}\Big[-\lambda_1|u|^2-\lambda_2|v|^2+\dfrac{\mu_1}{p}|u|^{2p}+\dfrac{\mu_2}{p}|v|^{2p}\\
						&\quad\quad\quad-\dfrac{1}{2}\left(u^2+v^2\right)(x\cdot\nabla w_{u,v})-w_{u,v}\left(u^2+v^2\right)+\dfrac{2\beta}{p}|uv|^p\Big]dx\\
						&=\int_{\partial B_R(0)}\Big[\dfrac{|x\cdot \nabla u|^2}{R}+\dfrac{|x\cdot \nabla v|^2}{R}+R\Big(\dfrac{\beta}{p}|uv|^p-\dfrac{|\nabla u|^2}{2}-\dfrac{|\nabla v|^2}{2}-\dfrac{\lambda_1}{2}|u|^2-\dfrac{\lambda_2}{2}|v|^2\\
						&\quad\quad\quad\quad\quad+\dfrac{\mu_1}{2p}|u|^{2p}+\dfrac{\mu_2}{2p}|v|^{2p}-\dfrac{1}{2}w_{u,v}(u^2+v^2)\Big)\Big]dS.
					\end{aligned}
				\end{equation}
				We define function $h:\R^2\to\R$ by
				\begin{equation*}
					\begin{aligned}
						h(x):=&\dfrac{|x\cdot \nabla u|^2}{|x|^2}+\dfrac{|x\cdot \nabla v|^2}{|x|^2}+\dfrac{\beta}{p}|uv|^p-\dfrac{|\nabla u|^2}{2}-\dfrac{|\nabla v|^2}{2}\\
						&\quad-\dfrac{\lambda_1}{2}|u|^2-\dfrac{\lambda_2}{2}|v|^2
						+\dfrac{\mu_1}{2p}|u|^{2p}+\dfrac{\mu_2}{2p}|v|^{2p}-\dfrac{1}{2}w_{u,v}(u^2+v^2).
					\end{aligned}
				\end{equation*}
				Since $(u,v)\in E\times E\subset H^1(\R^2)\times H^1(\R^2)$, it is easy to check that $h\in L^1(\R^2)$. Now we claim that there exists a sequence $R_n\rightarrow+\infty$ such that 
				\begin{equation*}
					R_n\int_{\partial B_{R_n}(0)}|h|dS\rightarrow0.
				\end{equation*}
				Otherwise, there exist constants $R_0>0$ and $\varrho>0$ such that 
				\begin{equation*}
					R\int_{\partial B_{R}(0)}|h|dS\geq \varrho>0,\quad\mathrm{for}~R\geq R_0.
				\end{equation*}
				Hence,
				\begin{equation*}
					\int_{\R^2}|h|dx=\int_{0}^{+\infty}\left(R\int_{\partial B_R(0)}|h|dS\right)dR\geq\int_{R_0}^{+\infty}\varrho dR=\infty,
				\end{equation*}
				which is a contradiction to $h\in L^1(\R^2)$. 
				
				Letting $R_n\rightarrow+\infty$ in \eqref{boundary}, we obtain 
				\begin{equation}\label{R2}
					\begin{aligned} 
						&\int_{\R^2}\Big[-\lambda_1|u|^2-\lambda_2|v|^2+\dfrac{\mu_1}{p}|u|^{2p}+\dfrac{\mu_2}{p}|v|^{2p}\\
						&\quad\quad\quad-\dfrac{1}{2}\left(u^2+v^2\right)(x\cdot\nabla w_{u,v})-w_{u,v}\left(u^2+v^2\right)+\dfrac{2\beta}{p}|uv|^p\Big]dx=0.
					\end{aligned}
				\end{equation}
				Then we know that $\left(u^2+v^2\right)(x\cdot\nabla w_{u,v})\in L^1(\R^2)$. Note that
				\begin{equation*}
					x\cdot\nabla w_{u,v}=\int_{\R^2}\dfrac{|x|^2-x\cdot y}{|x-y|^2}\left(u^2(y)+v^2(y)\right)dy.
				\end{equation*}
				So we have
				\begin{equation*}
					\begin{aligned}
						\int_{\R^2}(u^2+v^2)(x\cdot\nabla w_{u,v})&=\iint_{\R^2\times\R^2}\dfrac{|x|^2-x\cdot y}{|x-y|^2}\left(u^2(x)+v^2(x)\right)\left(u^2(y)+v^2(y)\right)dxdy\\
						&=\dfrac{1}{2}\iint_{\R^2\times\R^2}\dfrac{|x|^2+|y|^2-2 x\cdot y}{|x-y|^2}\left(u^2(x)+v^2(x)\right)\left(u^2(y)+v^2(y)\right)dxdy\\
						&=\dfrac{1}{2}\left(\int_{\R^2}(|u|^2+|v|^2)dx\right)^2.
					\end{aligned}
				\end{equation*}
				From \eqref{R2}, we obtain the following  Poho\v{z}aev identity 
				\begin{equation}\label{Pohožaev identity}
					\begin{aligned}
						\lambda_1&\int_{\R^2}|u|^2dx+\lambda_2\int_{\R^2}|v|^2dx+\iint_{\R^2\times\R^2}\log|x-y|\left(u^2(x)+v^2(x)\right)\left(u^2(y)+v^2(y)\right)dxdy\\
						&+\dfrac{1}{4}\left(\int_{\R^2}(|u|^2+|v|^2)dx\right)^2=\dfrac{1}{p}\left(\mu_1\int_{\R^2}|u|^{2p}dx+\mu_2\int_{\R^2}|v|^{2p}dx+2\beta\int_{\R^2}|uv|^pdx\right).
					\end{aligned}
				\end{equation}
				Multiplying the first equation of (\ref{equations of SP}) by $u$  and integrating over $\R^2$, we have
				\begin{equation}\label{Nehari1}
					\begin{aligned}
						\int_{\R^2}|\nabla u|^2dx&+\lambda_1\int_{\R^2}|u|^2dx+\iint_{\R^2\times\R^2}\log|x-y|\left(u^2(y)+v^2(y)\right)u^2(x)dxdy\\
						&=\mu_1\int_{\R^2}|u|^{2p}dx+\beta\int_{\R^2}|uv|^{p}dx.
					\end{aligned}
				\end{equation}
				Similarly, we also have
				\begin{equation}\label{Nehari2}
					\begin{aligned}
						\int_{\R^2}|\nabla v|^2dx&+\lambda_2\int_{\R^2}|v|^2dx+\iint_{\R^2\times\R^2}\log|x-y|\left(u^2(y)+v^2(y)\right)v^2(x)dxdy\\
						&=\mu_2\int_{\R^2}|v|^{2p}dx+\beta\int_{\R^2}|uv|^{p}dx.
					\end{aligned}
				\end{equation}
				Then from (\ref{Nehari1}) and (\ref{Nehari2}), we have 
				\begin{equation}\label{Nehari}
					\begin{aligned}
						&\int_{\R^2}|\nabla u|^2dx+\int_{\R^2}|\nabla v|^2dx+\lambda_1\int_{\R^2}|u|^2dx+\lambda_2\int_{\R^2}|v|^2dx\\
						&+\iint_{\R^2\times\R^2}\log|x-y|\left(u^2(x)+v^2(x)\right)\left(u^2(y)+v^2(y)\right)dxdy\\        &=\mu_1\int_{\R^2}|u|^{2p}dx+\mu_2\int_{\R^2}|v|^{2p}dx+2\beta\int_{\R^2}|uv|^{p}dx.
					\end{aligned}
				\end{equation}
				Combining (\ref{Pohožaev identity}) and (\ref{Nehari}), we finally obtain $\M(u,v)=0$. Then the proof is completed.
			\end{proof}
		\end{appendix}
		
		\noindent{\bf Acknowledgements.}
		This work is supported by the National Natural Science Foundation of China (Grant No. 12571120, 12271508),  and Scientific Research Project of Education Department of Jilin Province (Grant No. JJKH20220964KJ). X. Zeng is partly
		supported by the National Science Centre, Poland (Grant No. 2023/51/B/ST1/00968).
		
		\noindent{\bf Data availability.}	The manuscript has no associated data.

		\noindent{\bf Conflict of interest.}	The authors declare that they have no conflict of interest.

		\bibliographystyle{abbrv}
		\bibliography{ref}
		
	\end{document}